\definecolor{darkgreen}{RGB}{55,138,0}
\newcommand{\on}{\operatorname}
\newcommand{\bpr}{\begin{proof}}
\newcommand{\epr}{\end{proof}}
\newcommand{\mc}{\mathcal}
\newcommand{\mb}{\mathbb}
\newcommand{\GKdim}{\operatorname{GKdim}}
\newcommand{\ev}{\operatorname{ev}}
\newcommand{\coev}{\operatorname{coev}}
\newcommand{\catMod}{\operatorname{-Mod}}
\newcommand{\rcatMod}{\operatorname{Mod-}}
 \DeclareMathOperator{\rann}{r.ann}
\newcommand{\kk}{\Bbbk}
\newcommand{\op}{\operatorname{op}}
\newcommand{\cop}{\operatorname{cop}}
\newcommand{\coinv}{\operatorname{coinv}}
\newcommand{\beq}{\begin{equation}}
\newcommand{\eeq}{\end{equation}}
\newcommand{\Hom}{{\rm Hom}}
\newcommand{\End}{{\rm End}}
\newcommand{\Ext}{{\rm Ext}}
\newcommand{\Bimod}{\operatorname{-Bimod}}
\newcommand{\otensor}{\overline{\otimes}}
\newcommand{\lMod}{\operatorname{-Mod}}
\newcommand{\lmod}{\operatorname{-mod}}
\newcommand{\rMod}{\operatorname{Mod-}}
\numberwithin{equation}{section}
 \theoremstyle{plain}
\newtheorem{theorem}[equation]{Theorem}
\newtheorem{lemma}[equation]{Lemma}
\newtheorem{corollary}[equation]{Corollary}
\newtheorem{proposition}[equation]{Proposition}
\theoremstyle{definition}
\newtheorem{question}[equation]{Question}
\newtheorem{definition}[equation]{Definition}
\newtheorem{remark}[equation]{Remark}
\newtheorem{standing-hypothesis}[equation]{Standing Hypothesis}
\begin{document}

\title{Homological integrals for weak Hopf Algebras}
\author{D. Rogalski, R. Won, J.J. Zhang}

\address{Rogalski: Department of Mathematics, University of California, San Diego,
La Jolla, CA 92093, USA}

\email{drogalski@ucsd.edu}

\address{Won:
Department of Mathematics, George Washington University, Washington, DC 20052,
USA}
\email{robertwon@gwu.edu}

\address{Zhang: Department of Mathematics, Box 354350,
University of Washington, Seattle, WA 98195, USA}

\email{zhang@math.washington.edu}

\begin{abstract}
We introduce the notion of a homological integral for an infinite-dimensional weak Hopf algebra
and use the homological integral to prove several structure theorems. For example,
we prove that the Artin--Schelter property and the 
Van den Bergh condition are equivalent for a noetherian 
weak Hopf algebra, and that the antipode is automatically 
invertible in this case.  We also prove a decomposition 
theorem that states that any weak 
Hopf algebra finite over an affine center is a direct 
sum of Artin--Schelter Gorenstein, Cohen--Macaulay, 
GK dimension homogeneous weak Hopf algebras. 
\end{abstract}

\subjclass[2000]{Primary 16E10, 16T99, 18D99}


\keywords{Infinite-dimensional weak Hopf algebra, 
homological integral, Artin--Schelter Gorenstein,
Van den Bergh condition}


\maketitle


\section*{Introduction}
\label{yysec0}

Throughout let $\kk$ be a base field. If $H$ is a 
finite-dimensional Hopf algebra over $\kk$, the theory 
of integrals is of primary importance in understanding 
the structure of $H$. This theory has been extended to 
many more general settings: for example, if $H$ is a 
finite-dimensional \emph{weak} Hopf algebra, integrals 
were defined for $H$ in the earliest papers on the 
subject \cite{BNS}.  In another direction, when $H$ is 
an infinite-dimensional Hopf algebra satisfying the 
Artin--Schelter Gorenstein condition, Lu, Wu, 
and third-named author defined the homological integral 
for $H$, which allows for many of the applications of 
the integral to be extended to this setting. The goal 
of this paper is to study the notion of integral for 
the common generalization of the cases above, when $H$ 
is a infinite-dimensional weak Hopf algebra.

We first review some of the history and important results 
about integrals. Let $H$ be a Hopf algebra over $\kk$, 
with comultiplication $\Delta: H \to H \otimes_{\kk} H$, 
counit $\epsilon: H \to \kk$ and antipode $S: H \to H$.  
Let $_H \kk$ indicate the \emph{trivial left $H$-module}, 
where $h \lambda = \epsilon(h) \lambda$ for 
$h \in H, \lambda \in \kk$. A \emph{left integral} for 
$H$ is an element $\int^{\ell} \in H$ such that 
$h \int^{\ell} = \epsilon(h) \int^{\ell}$ for all $h \in H$.  
In other words, a nonzero integral $\int^{\ell}$ generates 
a $1$-dimensional left ideal $\kk \int^{\ell}$ of $H$ which 
is isomorphic as a left $H$-module to the trivial module 
$_H\kk$.  The space of all left integrals is denoted 
$\int_H^{\ell}$. Right integrals are defined similarly.  

It is well-known that a left or right integral exists in 
$H$ if and only if $\dim_{\kk} H < \infty$. In this 
case there is a unique (up to a scalar) nonzero 
left integral $\int^{\ell}$, so that 
$\int^{\ell}_H = \kk \int^{\ell}$. The space $\int^{\ell}_H$ 
is also a right ideal, so for all $h \in H$, 
$\int^{\ell} h =\rho(h)\int^{\ell}$ for some ring 
homomorphism $\rho: H \to \kk$; that is, $\rho$ is a 
grouplike element of $H^*$. Note that we can also identify 
$\int^{\ell}_H$ with $\Hom_H(\leftidx{_{H}} \kk, H)$. 
Similarly, the space of right integrals $\kk \int^{r}$ is 
one-dimensional and defines a grouplike element $\sigma$ 
by considering it as a left $H$-module. Integrals have many 
uses in the basic structure theory of finite-dimensional 
Hopf algebras.  For example, there is Larson--Sweedler's 
version of Maschke's Theorem for Hopf algebras: $H$ is 
semisimple if and only if $\epsilon(\int^{\ell}_H) \neq 0$.  

Next, consider the case that $H$ is a possibly 
infinite-dimensional Hopf algebra over $\kk$. Recall 
that an algebra $H$ is \emph{Artin--Schelter (AS) 
Gorenstein} if (i) $H$ has finite injective dimension 
$d$ as a left $H$-module; (ii) 
$\Ext^i_H(\leftidx{_H} \kk, H) = 0$ for $i \neq d$, 
while $\dim_{\kk} \Ext^d_H(\leftidx{_H}\kk, H) = 1$; 
and the right sided versions of (i) and (ii) also hold.  
If moreover $H$ has finite global dimension $d$, then 
$H$ is called \emph{Artin--Schelter regular}. Brown 
and Goodearl conjectured that all noetherian Hopf algebras 
are AS Gorenstein. This is known to be true, for example, 
when $H$ is an affine noetherian polynomial identity 
(PI) algebra, by work of Wu and third-named author 
\cite{WZ1}.  For an AS Gorenstein Hopf algebra, there is 
a natural one-dimensional module which plays the same 
role as the integral does in the finite-dimensional case.

\begin{definition} \cite{LWZ07}
\label{yydef0.1}
Let $H$ be an AS Gorenstein Hopf algebra of 
injective dimension $d$. The \emph{space of left 
homological integrals} is the one-dimensional vector 
space
\[
{\textstyle \int}^{\ell}_H = \Ext_H^d(\leftidx{_H}\kk, H),
\]
and any nonzero element in this space is called a 
\emph{left homological integral}.
\end{definition}

Similar to the finite-dimensional case, the space of 
left integrals $\int^{\ell}_H$ is an $(H, H)$-bimodule 
which is isomorphic to the trivial module $_H \kk$ on 
the left, but defines some grouplike element 
$\rho \in H^*$ on the right. Many important theorems 
about finite-dimensional Hopf algebras have been 
generalized to the case of AS Gorenstein Hopf 
algebras $H$ by using the homological integral. 
For example, let $H^{e}=H\otimes_{\kk} H^{\op}$ be 
the enveloping algebra of $H$. Brown and the 
third-named author showed that $\Ext^d_{H^e}(H, H^e) 
\cong \leftidx{^1}H^{\mu}$.  Here $\mu = \xi 
\circ S^2$ is the \emph{Nakayama automorphism} of 
$H$, where $\xi$ is the left winding automorphism 
$\xi(h) =  \sum \rho(h_1)h_2$ associated to the 
grouplike element $\rho \in H^*$ given by $\int^{\ell}_H$.  
The homological integral has also been an essential tool 
in the study of Hopf algebras of low 
Gelfand--Kirillov (GK) dimension; particularly important 
is the \emph{integral order} of $H$, that is, the order 
of the grouplike element $\rho$ in the group of grouplikes 
in $H^*$.  Due to work of a number of authors, the affine 
prime Artin--Schelter regular Hopf algebras with $\GKdim(H) = 1$ have 
been completely classified.  See \cite{BZ21} for a 
survey of this work.

In this paper, we extend the important theory of integrals 
to the setting of infinite-dimensional weak Hopf algebras. 
A weak Hopf algebra $(H, m, u, \Delta, \epsilon, S)$ is a 
structure similar to a Hopf algebra (we review the 
formal definition in Section~\ref{yysec1}). For the 
purposes of the introduction, the most important 
feature of a weak Hopf algebra $H$, which motivates 
this concept, is the corresponding multiring category 
structure on its category of left modules, $(H\lMod,
\otensor^{\ell}, H_t)$. Here, $\otensor^{\ell}$ is a 
natural monoidal product defined using $\Delta$, and 
$H_t$ is a unit object defined as follows. The weak 
Hopf algebra comes along with a \emph{target counital 
map} $\epsilon_t: H \to H$.  Then $H_t = \epsilon_t(H)$ 
with a natural left $H$-module structure given by 
$h \cdot x = \epsilon_t(hx)$ for $h \in H, x \in H_t$.  Similarly there is a \emph{source counital map} 
$\epsilon_s$, a \emph{source counital subalgebra} 
$H_s = \epsilon_s(H)$ which is a right $H$-module, and 
a multiring category $(\rMod H, \otensor^{r}, H_s)$.

If $H$ is a weak Hopf algebra which is finite-dimensional 
over $\kk$, a theory of integrals which closely parallels 
that for finite-dimensional Hopf algebras is known.  
A \emph{left integral} for $H$ is an element 
$\int^{\ell} \in H$ such that $h \int^{\ell} = 
\epsilon_t(h) \int^{\ell}$ for all $h \in H$.  The 
set $\int^{\ell}_H$ of all left integrals is a right 
ideal of $H$, called the \emph{space of left integrals} 
in $H$.  We also have an isomorphism of right 
$H$-modules $\Hom_H(H_t, H) \cong \int^{\ell}_H$, via 
the map sending $f: H_t \to H$ to $f(1)$.  

Now let $H$ be an arbitrary weak Hopf algebra over 
$\kk$. As in the case of infinite-dimensional Hopf 
algebras, we expect to be able to define a reasonable 
integral only when the algebra has good homological 
properties, so we extend the definition of AS Gorenstein 
as follows.  We say that a $\kk$-algebra $A$ is 
\emph{Artin--Schelter (AS) Gorenstein} if (i) 
${} _A A$ has finite injective dimension $d$; (ii) for 
all finite-dimensional left $A$-modules $V$ and 
$i \neq d$ we have $\Ext_A^i(V, A) = 0$, while 
$\Ext_A^d(V, A)$ is finite-dimensional over $\kk$; and 
(iii) the analogous properties hold also on the right. 
When $A=H$ is a (non-weak) Hopf algebra, then this 
is equivalent to the definition of AS Gorenstein given 
before Definition \ref{yydef0.1}.

For an AS Gorenstein weak Hopf algebra, it is not 
hard to guess at a definition of integral that is 
a common generalization of the definition of integral 
for finite-dimensional weak Hopf algebras and the 
homological integral for infinite-dimensional Hopf 
algebras.  

\begin{definition}
\label{yydef0.2}
Let $H$ be an AS Gorenstein weak Hopf algebra of 
injective dimension $d$ with counital subalgebras 
$H_s$ and $H_t$.  The \emph{left homological integral} 
of $H$ is defined to be the right $H$-module 
$\int^{\ell}_H = \Ext^d_H(H_t, H)$.  Similarly, the 
\emph{right homological integral} of $H$ is the left 
$H$-module $\int^{r}_H = \Ext^d_{H^{\op}}(H_s, H)$.
\end{definition}

We first prove the following basic result about the 
integral, which generalizes the fact that the 
homological integral of an AS Gorenstein Hopf 
algebra determines a grouplike element of the dual.

\begin{proposition}[Proposition \ref{yypro3.7}]
\label{yypro0.3}
Let $H$ be a noetherian AS Gorenstein weak Hopf 
algebra. The left integral $\int^{\ell}_H$ 
{\rm{(}}resp. right integral $\int^r_H${\rm{)}} is 
an invertible object in the tensor category of right 
$H$-modules {\rm{(}}resp. left $H$-modules{\rm{)}}.
\end{proposition}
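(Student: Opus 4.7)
The plan is to prove invertibility in three stages: establish finiteness and projectivity properties of $\int^\ell_H$, exhibit an explicit candidate inverse, and verify the tensor product isomorphism identifying the product with $H_s$.

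First I would show that $\int^\ell_H$ is a finitely generated projective right $H$-module. The counital subalgebra $H_t$ is finite-dimensional and separable over $\kk$, hence a finitely generated left $H$-module. Since $H$ is noetherian with finite injective dimension $d$, the module $H_t$ admits a bounded resolution by finitely generated projective left $H$-modules. Applying $\Hom_H(-,H)$ produces a bounded complex of finitely generated projective right $H$-modules, and the AS Gorenstein hypothesis forces $\Ext^i_H(H_t,H) = 0$ for $i \neq d$. Standard homological algebra --- using that $\RHom_H(H_t, H)$ is perfect when $H_t$ has finite projective dimension over a noetherian $H$ --- then implies that $\int^\ell_H$ is finitely generated projective on the right.

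Next I would identify the structure of $\int^\ell_H$ as an $(H_t, H)$-bimodule. The left $H$-module $H_t$ has a natural right $H_t$-action coming from multiplication by elements of $H_t \subseteq H$, compatible with the left $H$-action in the appropriate sense. Consequently $\int^\ell_H = \Ext^d_H(H_t, H)$ inherits a compatible left $H_t$-action. Restricting on the right to $H_s$, one obtains an $(H_t, H_s)$-bimodule, and since both $H_t$ and $H_s$ are finite-dimensional separable $\kk$-algebras, invertibility of this bimodule can be directly checked and will follow from the AS Gorenstein condition applied componentwise over the simple summands. For the candidate inverse, I would take $\int^r_H = \Ext^d_{H^{\op}}(H_s, H)$, which is naturally a left $H$-module and, by the analogue of the bimodule argument, an $(H, H_t)$-bimodule. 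The Yoneda composition then yields a natural pairing
\[
\int^\ell_H \otensor^r \int^r_H \lra \Ext^{2d}_{H^e}(H_t \otensor^\ell H_s,\, H),
\]
where $H^e = H \otimes_\kk H^{\op}$ and the multiplication map $H \otimes_\kk H \to H$ is used to reduce coefficients. Using the weak Hopf axioms to identify $H_t \otensor^\ell H_s$ with (a direct summand of) $H$ itself as an $H^e$-module, the right-hand side should reduce, via AS Gorenstein duality for $H$ as a bimodule, to a right $H$-module naturally isomorphic to $H_s$.

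The main obstacle will be making the identification in the final step fully rigorous: one needs an AS Gorenstein / Van den Bergh type statement for $H$ as an $H^e$-module, together with a careful analysis of how Yoneda composition interacts with the weak Hopf coproduct. A cleaner back-up strategy would be to first verify the result over finite-dimensional quotients of $H$ (where the classical integral theory of \cite{BNS} applies directly) and then lift via a change-of-rings spectral sequence; in either approach, the heart of the argument is showing that the Yoneda pairing lands isomorphically on $H_s$ rather than on some genuine twist of it.
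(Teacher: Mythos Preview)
Your proposal has a basic type mismatch that prevents the main step from working. You take $\int^r_H = \Ext^d_{H^{\op}}(H_s,H)$ as the candidate inverse of $\int^\ell_H$ inside $(\rMod H,\otensor^r,H_s)$, but $\int^r_H$ is a \emph{left} $H$-module, so the expression $\int^\ell_H \otensor^r \int^r_H$ is not well-formed: $\otensor^r$ is the monoidal product on right modules. The displayed Yoneda pairing compounds this --- $H_t \otensor^\ell H_s$ mixes a left module with a right one, and landing in $\Ext^{2d}_{H^e}(-,H)$ is suspect since $H$ has injective dimension $d$. Your first stage also aims at the wrong target: invertibility in $(\rMod H,\otensor^r,H_s)$ is controlled by the underlying $(H_s,H_s)$-bimodule via the monoidal forgetful functor, not by projectivity over $H$; and in any case a perfect complex with cohomology concentrated in one degree gives that cohomology finite projective dimension, not projectivity (think of $\mathbb{Z}\xrightarrow{2}\mathbb{Z}$).

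The paper never exhibits an explicit inverse. The key step is the formula $\Ext^d_H(V,H)\cong \int^\ell_H \otensor^r (V^*)^S$ for all finite-dimensional $V\in H\lMod$ (Lemma~\ref{yylem3.5}), obtained by combining the dual-pair adjunction $\Hom_H(-\otensor^\ell V,H)\cong\Hom_H(-,H\otensor^\ell V^*)$ with the fundamental theorem of Hopf modules, which identifies the $(H,H)$-bimodule $H\otensor^\ell V^*$ with $H\otensor^r (V^*)^S$. Since $\Ext^d_H(-,H)$ is a duality on finite-dimensional modules by the AS Gorenstein hypothesis, one can choose $X$ with $\Ext^d_H(X,H)\cong H_s$, and the formula then gives $H_s\cong \int^\ell_H \otensor^r Y$ for some finite-dimensional right module $Y$. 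Finally, a one-sided inverse is enough: passing to the underlying $(H_s,H_s)$-bimodule category, which is semisimple with finitely many simples because $H_s$ is separable, one-sided invertibility forces two-sided invertibility (Lemma~\ref{yylem2.2}), and this reflects back up to $\rMod H$ via the right-module analogue of Lemma~\ref{yylem2.3}.
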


Next, we give generalizations of the work of Brown 
and third-named author in \cite{BZ}. We say that 
$H$ \emph{satisfies the Van den Bergh condition} if 
there is $d \geq 0$ such that $\Ext^i_{H^e}(H, H^e) =0$ 
for $i \neq d$, while $\Ext^d_{H^e}(H, H^e) = U$ is 
an invertible $H$-bimodule (called the 
\emph{Nakayama bimodule}). 

\begin{theorem}
\label{yythm0.4}
Let $H$ be a noetherian weak Hopf algebra.
\begin{enumerate}
\item[(1)]
{\rm{[Proposition \ref{yypro4.5}]}}
For all $i \geq 0$, $\Ext^i_{H^e}(H, H^e) \cong 
\Ext_H^i(H_t, H) \otensor^r H^{S^2}$ as 
$(H, H)$-bimodules, where the right $H$-module 
structure comes from the monoidal product 
$\otensor^r$ in $\rMod H$, and the left $H$-module 
structure comes from the left side of $H^{S^2}$.
\item[(2)]
{\rm{[Theorem \ref{yythm4.7}]}}
$H$ satisfies the Van den Bergh condition if and 
only if $H$ is AS Gorenstein. When this holds, then 
$U:= \Ext^d_{H^e}(H, H^e)$ is an invertible 
$H$-bimodule, where $d$ is the injective 
dimension of $H$.  In particular, 
$U \cong \int^{\ell}_H \otensor^r H^{S^2}$ as 
$(H, H)$-bimodules.
\end{enumerate}
\end{theorem}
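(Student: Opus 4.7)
The plan is to establish (1) as a structural identity relating $\Ext$ over the enveloping algebra to $\Ext$ over $H$ with values in the trivial module $H_t$, and then deduce (2) from (1) together with Proposition \ref{yypro0.3}. For (1), I would construct a functor $F$ from $H$-bimodules to left $H$-modules given by restriction along an appropriately twisted diagonal map $H \to H^e$, chosen so that $F(H) \cong H_t$ and $F(H^e) \cong H \otensor^r H^{S^2}$ with the indicated bimodule structure. The $S^2$-twist arises because weak Hopf algebras are not cocommutative: the formula producing a left $H$-module structure on a bimodule from the comultiplication must absorb a factor of $S$ on the right tensorand, and the failure of $S$ to be an involution introduces the $S^2$. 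A standard change-of-rings/adjunction argument then produces a natural isomorphism $\Ext^i_{H^e}(H, M) \cong \Ext^i_H(H_t, F(M))$ for $H$-bimodules $M$; specializing to $M = H^e$ yields (1).

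For the forward direction of (2), assume that $H$ is AS Gorenstein of injective dimension $d$. The hypothesis gives $\Ext^i_H(H_t, H) = 0$ for $i \neq d$, while Proposition \ref{yypro0.3} tells us $\int^{\ell}_H = \Ext^d_H(H_t, H)$ is invertible as a right $H$-module. Because the antipode is invertible for noetherian AS Gorenstein weak Hopf algebras (as announced in the abstract), $H^{S^2}$ is an invertible bimodule. Substituting into (1), we obtain $\Ext^i_{H^e}(H, H^e) = 0$ for $i \neq d$ and $U := \Ext^d_{H^e}(H, H^e) \cong \int^{\ell}_H \otensor^r H^{S^2}$ invertible as an $H$-bimodule, confirming the Van den Bergh condition.

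For the converse, (1) immediately translates the Van den Bergh hypothesis into vanishing of $\Ext^i_H(H_t, H)$ for $i \neq d$ together with invertibility of $\Ext^d_H(H_t, H)$ as a right $H$-module. The remaining task is to promote AS-Gorenstein-at-$H_t$ to AS-Gorenstein-at-every-finite-dimensional $V$. The plan is to exploit rigidity on the finite-dimensional part of $H\lMod$: for a finite-dimensional left $H$-module $V$ with left dual $V^*$, duality gives a natural adjunction isomorphism $\Ext^i_H(V, H) \cong \Ext^i_H(H_t, V^* \otensor^\ell H)$. A weak-Hopf analogue of the classical Hopf-algebra ``freeness trick'' should then show that $V^* \otensor^\ell H$ is a finitely generated projective left $H$-module, reducing the right-hand side to a finite summand of copies of $\Ext^i_H(H_t, H)$. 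A symmetric argument controls the right-side AS-Gorenstein conditions.

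The principal technical obstacles are both identifications: the precise bimodule structure in (1), including correct bookkeeping of the $S^2$-twist, and the freeness of $V^* \otensor^\ell H$ as a left $H$-module in the weak-Hopf setting. Both are delicate because tensor products of modules over a weak Hopf algebra are balanced over $H_t$ (or $H_s$) rather than over $\kk$, so the classical Hopf-algebra manipulations via the antipode must be adapted to preserve the relevant $H_t$-bimodule structures on both sides. I would carry out these verifications by tracking the action of the functors involved on explicit bar-style resolutions of $H$ over $H^e$, and by comparing them against the already-established behavior of $F$ on the free bimodule $H^e$.
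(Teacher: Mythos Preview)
Your plan follows the paper's approach closely: your functor $F$ is the paper's $L$ of Definition~\ref{yydef4.1}, built from the twisted diagonal $\Delta'(h) = h_1 \otimes S(h_2)$; the adjunction you propose is Lemma~\ref{yylem4.3}/Corollary~\ref{yycor4.4}; the identification $F(H^e) \cong H \otensor^r H^{S^2}$ is Lemma~\ref{yylem4.2}(2); and your ``freeness trick'' is exactly the fundamental theorem of Hopf modules (Theorem~\ref{yythm1.7}, Lemma~\ref{yylem1.8}), which gives $H \otensor^\ell W \cong H \otimes_{H_s} W$ as left $H$-modules and hence finite projectivity when $W$ is finite-dimensional.

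One step in your converse direction is understated. You assert that (1) ``immediately'' yields invertibility of $\Ext^d_H(H_t,H)$ once $U$ is an invertible bimodule, but the isomorphism $U \cong \int^\ell_H \otensor^r H^{S^2}$ entangles the integral with $H^{S^2}$, and at this point $S^2$ is not yet known to be bijective, so $H^{S^2}$ is not obviously invertible and cannot simply be cancelled. The paper disentangles the two factors via the functorial form $-\otimes_H U \cong \int^\ell_H \otensor^r (-)^{S^2}$ (Proposition~\ref{yypro4.5}(2)): since $-\otimes_H U$ is an autoequivalence it is essentially surjective, so some $V$ satisfies $\int^\ell_H \otensor^r V^{S^2} \cong H_s$, and the one-sided-invertibility criterion of Lemma~\ref{yylem2.3} then gives invertibility of $\int^\ell_H$ in $(\rMod H,\otensor^r)$. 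Only after this does one conclude that $(-)^{S^2}$ is itself an autoequivalence and hence that $S$ is bijective (Lemma~\ref{yylem2.6}). The same trick of evaluating at $H_s$ and using $(H_s)^{S^2}\cong H_s$ is what makes the vanishing for $i\neq d$ clean. This is not a flaw in your overall strategy---the functorial identity follows from (1) by tensoring with an arbitrary right module, since the left $H$-structure sits on $H^{S^2}$---but it is a genuine argument, not an immediacy, and should be spelled out. (A minor correction: your adjunction should read $\Ext^i_H(V,H) \cong \Ext^i_H(H_t, H \otensor^\ell V^*)$ with $H$ on the left; see Lemma~\ref{yylem3.4} with $W = H_t$.)
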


While exotic examples of Hopf algebras exist for 
which the antipode $S$ is not invertible, it is 
natural to ask whether $S$ must be a bijection for 
reasonably well-behaved Hopf algebras.  Skryabin 
has conjectured that this is the case for any 
noetherian Hopf algebra. Using techniques from 
homological integrals, L\"u, Oh, Wang, and Yu 
proved that any noetherian AS Gorenstein Hopf 
algebra has a bijective antipode $S$ 
\cite[Corollary 0.4]{LOWY}.  Moreover, Brown and 
Goodearl have conjectured that any noetherian Hopf 
algebra is automatically AS Gorenstein, which 
would imply Skryabin's conjecture.  The 
Brown--Goodearl conjecture has been proved for 
PI Hopf algebras \cite{WZ1}.  

Using our results on the homological integrals of 
weak Hopf algebras, we are able to extend the 
results  from \cite{LOWY} to this case:

\begin{theorem}[Theorem~\ref{yythm4.6}]
\label{yythm0.5}
Let $H$ be a noetherian weak Hopf algebra which 
is a finite sum of AS Gorenstein algebras.  
Then the antipode $S$ is a bijection.
\end{theorem}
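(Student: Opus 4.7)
The strategy is to reduce to showing $S^2 \in \Aut(H)$ and then apply the Nakayama-type identification in Theorem~\ref{yythm0.4}(1) together with Van den Bergh's classical theorem applied to each AS Gorenstein summand of $H$.

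Since $S$ is an anti-algebra endomorphism, $S^2$ is an algebra endomorphism, and $S$ is bijective if and only if $S^2$ is (if $(S^2)^{-1}$ exists, then $S \circ (S^2)^{-1}$ is a right inverse of $S$ and $(S^2)^{-1} \circ S$ is a left inverse). So it suffices to show $S^2 \in \Aut(H)$. Write $H = A_1 \oplus \cdots \oplus A_n$ as a direct sum of two-sided ideals, with each $A_j$ a noetherian AS Gorenstein algebra of injective dimension $d_j$. By Van den Bergh's theorem applied to each $A_j$, the bimodule $U_j := \Ext^{d_j}_{A_j^e}(A_j, A_j^e)$ is an invertible $A_j$-bimodule, so $U_j \cong {}^1 A_j^{\mu_j}$ for some $\mu_j \in \Aut(A_j)$. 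The block decomposition $H^e = \bigoplus_{j,k} A_j \otimes_{\kk} A_k^{\op}$ yields $\Ext^i_{H^e}(H, H^e) = \bigoplus_{j:\, d_j = i} U_j$; summing over $i$ then gives
\[
W := \bigoplus_i \Ext^i_{H^e}(H, H^e) \cong {}^1 H^{\sigma}
\]
as $H$-bimodules, where $\sigma \in \Aut(H)$ is the automorphism with $\sigma|_{A_j} = \mu_j$ for each $j$.

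On the other hand, summing the isomorphisms in Theorem~\ref{yythm0.4}(1) gives $W \cong T \otensor^r H^{S^2}$ where $T := \bigoplus_i \Ext^i_H(H_t, H)$. A blockwise application of Proposition~\ref{yypro0.3} identifies $T$ as an invertible object in the tensor category $(\rMod H, \otensor^r, H_s)$. Tensoring both sides by the inverse of $T$ in this category then shows that $H^{S^2}$ is an invertible $H$-bimodule. Since a bimodule twist ${}^1 H^{\phi}$ by an algebra endomorphism $\phi$ of $H$ is invertible as an $H$-bimodule if and only if $\phi$ is an algebra automorphism, we conclude $S^2 \in \Aut(H)$, and hence $S$ is bijective. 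The main obstacle will be extending the invertibility statement of Proposition~\ref{yypro0.3} from the strictly AS Gorenstein case to the present finite-sum setting; this should follow by applying the proposition blockwise to each summand $A_j$ and assembling, but requires care with how $H_t$ and $H_s$ interact with the idempotents of the sum.
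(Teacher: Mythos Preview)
Your argument has a genuine gap at the step where you invoke ``Van den Bergh's theorem applied to each $A_j$'' to conclude that $U_j := \Ext^{d_j}_{A_j^e}(A_j, A_j^e)$ is an invertible $A_j$-bimodule. The AS Gorenstein condition of Definition~\ref{yydef3.1}(2) does \emph{not} imply the Van den Bergh condition for an arbitrary noetherian algebra; that implication is precisely the content of Theorem~\ref{yythm4.7} for weak Hopf algebras, and its proof uses the present theorem. The summands $A_j$ are only algebras, not weak Hopf algebras, so no such result is available for them. Even granting invertibility of $U_j$, your further claim $U_j \cong {}^1 A_j^{\mu_j}$ is also unjustified: invertible bimodules over noetherian rings need not be twists by automorphisms. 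Thus the identification $W \cong {}^1 H^{\sigma}$, on which the rest of your argument hinges, is not established.

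The paper avoids any a priori claim about $W$ by exploiting the left--right symmetry instead. Summing Proposition~\ref{yypro4.5}(1) and its right-handed analogue gives two bimodule descriptions
\[
\widetilde{\textstyle\int}^{\ell}_H \otensor^r H^{S^2} \;\cong\; W \;\cong\; {}^{S^2}H \otensor^{\ell} \widetilde{\textstyle\int}^r_H.
\]
The total integrals are invertible in their respective module categories by Proposition~\ref{yypro3.6} (this is the correct reference for the sum-of-AS-Gorenstein hypothesis, and its proof does not require the $A_j$ to be weak Hopf algebras, addressing the gap you flag at the end). Cancelling $\widetilde{\int}^{\ell}_H$, then using Lemma~\ref{yylem2.4} to pull the ${}^{S^2}$ twist outside, one obtains $H^{S^2} \cong {}^{S^2}M$ with $M$ an invertible $H$-bimodule (Lemma~\ref{yylem2.5}); Lemma~\ref{yylem2.8} then forces $S^2$ to be an automorphism. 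The essential point is that the $S^2$ appears on \emph{both} sides of the isomorphism, and it is this symmetry---not any invertibility of $W$ itself---that drives the conclusion.
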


To explain the hypothesis of the preceding theorem, 
note that in \cite{RWZ1}, we proved that if $H$ 
is a weak Hopf algebra that is a finite module 
over its affine center, then $H$ is a finite 
direct sum \emph{as algebras} of noetherian AS 
Gorenstein algebras \cite[Theorem 0.3]{RWZ1}.  
While this is enough to define a version of the 
homological integral (see Section~\ref{yysec3}) 
and thus to prove Theorem~\ref{yythm0.5}, the 
strongest analog of the Brown--Goodearl conjecture 
for weak Hopf algebras would state that a 
noetherian weak Hopf algebra must be a finite 
direct sum of AS Gorenstein weak Hopf algebras. 
Direct sums are unavoidable, since the direct sum 
of two weak Hopf algebras is again a weak Hopf 
algebra, but if the two algebras have different 
injective dimensions then the direct sum cannot 
be AS Gorenstein.  In Section~\ref{yysec5}, we 
improve our result from \cite{RWZ1} to obtain 
this stronger form of the Brown--Goodearl 
conjecture, again when $H$ is a finite module 
over an affine center (see Theorem~\ref{yythm5.4}).

The following theorem summarizes what we now know 
about the structure of weak Hopf algebras finite 
over affine centers.

\begin{theorem}
\label{yythm0.6}
Let $H$ a weak Hopf algebra that is finitely 
generated as a module over its affine center.
\begin{enumerate}
\item[(1)]
The antipode of $H$ is bijective.
\item[(2)]{\rm{[Decomposition Theorem]}}
$H$ is a finite direct sum {\rm{(}}as weak Hopf 
algebras{\rm{)}} of AS Gorenstein weak Hopf algebras.
\item[(3)]
Every AS Gorenstein weak Hopf subalgebra summand in 
part {\rm{(2)}} satisfies the Van den Bergh condition.
\end{enumerate}
\end{theorem}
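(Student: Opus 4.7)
The plan is to derive Theorem~\ref{yythm0.6} as a direct consequence of the three main results developed earlier in the paper, treating part~(2) as the essential input and extracting parts~(1) and~(3) as corollaries. Since $H$ is finitely generated as a module over an affine (hence noetherian) commutative ring, $H$ is itself noetherian, so all the hypotheses needed to invoke Theorems~\ref{yythm0.4} and~\ref{yythm0.5} are automatic once we know the decomposition.

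First I would establish part~(2) by directly citing the decomposition theorem (Theorem~\ref{yythm5.4}) announced in the paragraph preceding the statement. This is the stronger form of the Brown--Goodearl conjecture in this setting: while \cite{RWZ1} only produces a decomposition of $H$ as a direct sum of AS Gorenstein algebras, Theorem~\ref{yythm5.4} upgrades this to a decomposition as weak Hopf algebras, i.e., the comultiplication, counit, target and source maps, and antipode all respect the direct sum. This upgrade is the whole content of Section~\ref{yysec5}, and is the step on which everything else hinges.

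Given part~(2), write $H = \bigoplus_{i=1}^n H_i$ as weak Hopf algebras, with each $H_i$ AS Gorenstein. Each $H_i$ is a quotient (and summand) of the noetherian algebra $H$, hence is itself a noetherian weak Hopf algebra. For part~(3), I then apply Theorem~\ref{yythm0.4}(2) to each summand $H_i$ separately: since $H_i$ is noetherian and AS Gorenstein, it satisfies the Van den Bergh condition, which is the assertion of~(3).

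For part~(1), I would first observe that once $H = \bigoplus H_i$ as weak Hopf algebras with each $H_i$ AS Gorenstein, $H$ is in particular a noetherian weak Hopf algebra that is a finite sum of AS Gorenstein algebras. Thus Theorem~\ref{yythm0.5} (Theorem~\ref{yythm4.6}) applies and yields bijectivity of the antipode $S$. The only subtlety worth remarking is that the antipode of $H$ restricts to the antipode of each $H_i$ (which is precisely the content of calling the decomposition one of weak Hopf algebras), so no extra compatibility check is needed. The entire difficulty of Theorem~\ref{yythm0.6} is therefore concentrated in Theorem~\ref{yythm5.4}; parts~(1) and~(3) are essentially bookkeeping once that decomposition is available.
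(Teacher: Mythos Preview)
Your proposal is correct and matches the paper's own argument: part~(2) is Corollary~\ref{yycor5.5} (i.e., Theorem~\ref{yythm5.4} together with Proposition~\ref{yypro5.2} and \cite[Theorem~0.3]{RWZ1}), part~(1) follows from Theorem~\ref{yythm4.6}, and part~(3) from Theorem~\ref{yythm4.7}(2). The only minor reordering is that in the paper's logic the bijectivity of $S$ (via Theorem~\ref{yythm4.6}, which needs only the \emph{algebra} decomposition from \cite{RWZ1}) is established before and used inside the proof of Theorem~\ref{yythm5.4}, so you do not need to wait for the weak Hopf decomposition to get part~(1).
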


Note that part (2) of the above corollary answers 
\cite[Question 8.2]{RWZ1} and part (1) answers 
\cite[Question 8.3]{RWZ1} in the case when $H$ is 
finite over its affine center.

Just as the theory of homological integrals has 
led to a classification of affine regular prime 
Hopf algebras of GK dimension $1$, we hope to use 
the results of this paper to study regular weak 
Hopf algebras of GK dimension 1 in future work.

A weak bialgebra is a special case of a more general 
construction called a \emph{bialgebroid}, 
which is a kind of bialgebra over a general base algebra 
$R$ (weak bialgebras are the case where $R$ is finite-dimensional semisimple).  
Kowalzig and Kr\"ahmer have studied a 
version of Poincar\'e duality for 
bialgebroids over $R$ with a kind of antipode.  
In particular, when specialized to weak Hopf algebras, \cite[Theorem 1]{KoKr} gives an isomorphism between Ext and Tor that is of a similar flavor as some of our results in Sections~\ref{yysec3} 
and \ref{yysec4} below.  there may well be generalizations 
of our results to the setting of bialgebroids with antipode, 
but we do not pursue this here.


\section{Preliminaries on weak Hopf algebras}
\label{yysec1}

Throughout we fix a field $\kk$ and all objects 
will be vector spaces over $\kk$.  The term 
\emph{finite-dimensional} will refer to the 
dimension of an object over $\kk$ unless otherwise 
specified. In this section we review the 
definition of a weak Hopf algebra and some of the 
basic results we will need. The reader can find 
more details in the survey article \cite{NV}.

A \emph{weak bialgebra} over $\kk$ is a 
$\kk$-vector space $H$ with both a $\kk$-algebra 
$(H,m,u)$ and a $\kk$-coalgebra structure 
$(H, \Delta, \epsilon)$ satisfying some 
compatibility axioms.  We frequently use sumless 
Sweedler notation to indicate the result of 
applying $\Delta$ to an element; so $\Delta(h) 
= h_1 \otimes h_2$. As with a usual bialgebra, 
we require $\Delta(gh) = \Delta(g)\Delta(h)$ for 
all $g, h \in H$. However, we do not require 
$\Delta(1) = 1$ or $\epsilon(gh) = 
\epsilon(g)\epsilon(h)$ in general; instead we 
specify
\[
(\Delta \otimes \on{id}) \circ \Delta(1) 
= (\Delta(1) \otimes 1)(1 \otimes \Delta(1)) 
= (1 \otimes \Delta(1))(\Delta(1) \otimes 1)
\]
and 
\[
\epsilon(fgh) = \epsilon(fg_1) \epsilon(g_2h) 
= \epsilon(fg_2)\epsilon(g_1h)\ \text{for all}\ 
f, g, h \in H.
\]
We think of the first of these equations as a 
kind of weak unitality of $\Delta$, and the second 
as a kind of weak multiplicativity of $\epsilon$.  

Applying the Sweedler notation we have 
$\Delta(1) = 1_1 \otimes 1_2$, which plays an 
important role in many formulas.  There are two 
important variants of $\epsilon$ that play a major 
role: defining 
\[
\epsilon_s(h) = 1_1 \epsilon(h 1_2)\ \qquad 
\text{and}\ \qquad \epsilon_t(h) = \epsilon(1_1h)1_2
\]
then $\epsilon_s, \epsilon_t: H \to H$ are called 
the \emph{counital maps} of $H$. The maps 
$\epsilon_s, \epsilon_t$ are not ring homomorphisms 
in general, but they are idempotent. The images of 
these maps are denoted $H_s = \epsilon_s(H)$ and 
$H_t = \epsilon_t(H)$ and are called the 
\emph{source} and \emph{target} \emph{counital 
subalgebras}, respectively. The subspaces $H_s$ and 
$H_t$ are finite-dimensional separable subalgebras 
of $H$ which commute with each other. They also 
have the following alternate characterization:
\[
H_s = \{h \in H \mid \Delta(h) = 1_1 \otimes h1_2\}\ 
\qquad \text{and}\ \qquad 
H_t = \{h \in H \mid \Delta(h) = 1_1h \otimes 1_2\}.
\]

A weak bialgebra is called a \emph{weak Hopf algebra} 
if there exists a $\kk$-linear \emph{antipode} 
$S: H \to H$ such that for all $h \in H$,
\begin{enumerate}
\item[(1)]
$h_1 S(h_2)  = \epsilon_t(h)$;
\item[(2)] 
$S(h_1) h_2 = \epsilon_s(h)$; and 
\item[(3)] 
$S(h_1)h_2S(h_3) = S(h)$.
\end{enumerate}
It follows from the axioms that $S$ is an anti-algebra 
and anti-coalgebra homomorphism of $H$.  Moreover, 
$S \circ \epsilon_s = \epsilon_t \circ S$ and 
$S \circ \epsilon_t = \epsilon_s \circ S$.  In 
particular, 
$S(H_s) = H_t$ and $S(H_t) = H_s$.
We will not assume in this paper that $S$ is 
bijective, since one of our goals is to show that 
this is automatic for certain nice weak Hopf 
algebras. 

One of the main reasons for considering weak Hopf 
algebras is that they lead to interesting 
monoidal categories. We refer the reader to 
\cite{EGNO} for the basic definitions and theory 
of monoidal categories.  We generally refer to 
a monoidal category as a triple 
$(\mc{C}, \otimes, \mathbbm{1})$, where $\mc{C}$ 
is an abelian category, $\otimes$ a monoidal 
product, and $\mathbbm{1}$ the unit object. All 
of the monoidal categories we consider will have 
standard canonical choices of associativity and 
unit isomorphisms and so we omit them from the 
notation.

For an algebra $A$, we write $A \lMod$ for the 
abelian category of left $A$-modules and 
$\rMod A$ for the category of right $A$-modules. 
If $B$ is another algebra, the category of $(A, B)$-bimodules 
will be denoted $(A, B) \Bimod$.  When $A = B$ we also 
refer to an $(A, A)$-bimodule as an \emph{$A$-bimodule}.

If $H$ is a weak bialgebra, for $M, N \in H 
\lMod$, we define 
\[
M \otensor^{\ell} N = \Delta(1)(M \otimes_{\kk} N) 
= \left\{ \sum m_i \otimes n_i \in M \otimes_{\kk} 
N \mid \sum m_i \otimes n_i = \sum 1_1 m_i \otimes 
1_2 n_i \right\},
\]
using that $\Delta(1)$ is an idempotent in 
$H \otimes H$. Then $M \otensor^{\ell} N 
\in H \lMod$ has a left $H$-module structure 
defined by $h \cdot (m \otimes n) = h_1m 
\otimes h_2n$.  The target counital subalgebra 
$H_t$ is a left $H$-module via the action 
$h \cdot x = \epsilon_t(hx)$ for $h \in H, 
x \in H_t$. The category $H \lMod$ is a 
monoidal category with product 
$\otensor^{\ell}$ and unit object $H_t$, 
where the associativity and unit constraints 
are just induced by the canonical 
associativity and unit constraints of 
$\otimes_{\kk}$.  By definition, it is clear 
that $\otensor^\ell$ is bilinear on morphisms 
and exact in each tensor coordinate.  
Symmetrically, $\rMod H$ is a monoidal category 
with analogous properties, where the monoidal 
product is $M \otensor^r N = (M \otimes_{\kk} N)
\Delta(1)$ and the unit object is $H_s$, which 
is a right $H$-module via 
$x \cdot h = \epsilon_s(xh)$.

If $H$ is a weak \emph{Hopf} algebra,  then any 
finite-dimensional module $M \in H \lMod$ has a 
\emph{left dual} $M^*$ in the sense of 
\cite[Chapter 2]{EGNO}, where $M^* = 
\Hom_\kk(M, \kk)$ is the $\kk$-linear dual with 
action $[h \cdot \phi](m) = \phi(S(h)m)$, for 
$\phi \in M^*$, $h \in H$, $m \in M$.  If $S$ is 
bijective, then any such $M$ also has a right 
dual $^* M = \Hom_\kk(M, \kk)$ with 
$[h \cdot \phi](m) = \phi(S^{-1}(h)m)$.  Since 
we do not assume that $S$ is bijective in this 
paper, we will work primarily with left duals. 
Similar comments of course apply to $\rMod H$; 
finite-dimensional objects in this category have 
left duals defined by an analogous formula, and 
also right duals if $S$ is bijective.

There is another important way of thinking about 
the monoidal product in $H \lMod$. For convenience 
we describe this only when $H$ is a weak Hopf 
algebra, though in fact there is a way to define 
it for any weak bialgebra.  Although the antipode 
$S$ need not be bijective in general, it is known 
that $S: H_s \to H_t$ and $S: H_t \to H_s$ are 
bijections, so we can write $S^{-1}(x)$ when $x$ 
is an element of $H_s$ or $H_t$.  Now for any 
$M \in H \lMod$, we can define an ``underlying" 
$H_t$-bimodule structure on $M$, where the 
left action is the restriction of the left 
$H$-action on $M$ to $H_t$, and the right action 
is defined by $m * x = S^{-1}(x) m$ for 
$m \in M, x \in H_t$. Now one may check that 
for $M, N \in H \lMod$, there is a natural 
identification $M \otensor^{\ell} N = 
M \otimes_{H_t} N$ as $H_t$-bimodules 
with the trivial formula $(m \otimes n) 
\mapsto (m \otimes n)$, where the left 
$H$-module structure on $M \otimes_{H_t} N$ is 
still given by the formula $h \dot (m \otimes n) 
= h_1m \otimes h_2n$. As such this gives a 
monoidal functor from 
$(H \lMod, \otensor^{\ell}, H_t)$ to 
$((H_t, H_t) \Bimod, \otimes_{H_t}, H_t)$.  
Note that we will also suppress the choice 
of isomorphisms that is part of the definition 
of monoidal functor, as they will always be 
the obvious canonical ones. As usual, the 
monoidal category $\rMod H$ of right 
$H$-modules can be described similarly; any 
$M \in \rMod H$ has an underlying 
$H_s$-bimodule, and $M \otensor^r N 
= M \otimes_{H_s} N$.

For a right $H$-module $M$, let $^S M$ be the 
left $H$-module which is $M$ as a vector space, 
with left action $h \cdot m = mS(h)$.  Similarly, 
for a left module $N$, let $N^S$ is the induced 
right module with action $n \cdot h = S(h)n$.   
Note that if $M$ is an $(H,B)$-bimodule for some 
other $\kk$-algebra $B$, then $M^S$ is a right 
$H \otimes_{\kk} B$-module; that is, the 
$B$-module structure is maintained on the same 
side.  

For any monoidal category $(\mc{C}, 
\otimes, \mathbbm{1})$, the ``opposite product" 
$\otimes^{\op}$, where $M \otimes^{\op} N = 
N \otimes M$, also gives a monoidal structure 
to $(\mc{C}, \otimes^{\op}, \mathbbm{1})$. The 
following property of the operation $S$ follows 
easily from the fact that $S$ is an 
anti-homomorphism of coalgebras.

\begin{lemma} 
\label{yylem1.1}
Let $H$ be a weak Hopf algebra.  Then $( -)^S: 
H \lMod \to \rMod H$ gives a monoidal functor 
$(H \lMod, \otensor^{\ell}, H_t) \to 
(\rMod H, (\otensor^r)^{\on{op}}, H_s)$. In 
particular, $(H_t)^S \cong H_s$ and 
$(M \otensor^{\ell} N)^S \cong N^S 
\otensor^r M^S$ as right $H$-modules, for 
$M, N \in H \catMod$.
\end{lemma}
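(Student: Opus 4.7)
The plan is to verify the claim by directly exhibiting the data of a monoidal functor---the underlying functor, a natural monoidal constraint, and a unit isomorphism---and then checking coherence. The engine driving everything is that $S$ is an anti-coalgebra homomorphism, i.e., $\Delta(S(h)) = S(h_2) \otimes S(h_1)$; this is precisely what produces the ``opposite'' in the target monoidal category $(\rMod H, (\otensor^r)^{\op}, H_s)$.

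First, $(-)^S$ is functorial: given a left $H$-module map $f: M \to N$, the same underlying $\kk$-linear map is a right $H$-module map $M^S \to N^S$ since $f(S(h)m) = S(h)f(m)$. Next, I would define the monoidal constraint $\phi_{M,N}: (M \otensor^{\ell} N)^S \to N^S \otensor^{r} M^S$ by the flip $m \otimes n \mapsto n \otimes m$. On the source, the right $H$-action sends $m \otimes n$ to $S(h)_1 m \otimes S(h)_2 n$, which by the anti-coalgebra property equals $S(h_2) m \otimes S(h_1) n$; its flip is $S(h_1) n \otimes S(h_2) m$. On the target, $(n \otimes m) \cdot h = n h_1 \otimes m h_2 = S(h_1) n \otimes S(h_2) m$ using the right actions on $N^S$ and $M^S$. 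These agree, so $\phi_{M,N}$ is equivariant. Well-definedness as a map between the respective subspaces $\Delta(1)(M \otimes N)$ and $(N^S \otimes M^S)\Delta(1)$ reduces to the identity $1_2 \otimes 1_1 = S(1_1) \otimes S(1_2)$ in $H \otimes H$, which follows from $\Delta(S(1)) = S(1_2) \otimes S(1_1)$ together with the standard fact $S(1) = 1$. Naturality in $M$ and $N$ is immediate from the formula.

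For the unit isomorphism $(H_t)^S \to H_s$, the natural candidate is the $\kk$-linear bijection $S|_{H_t}: H_t \to H_s$; equivariance between the right $H$-actions $x \cdot h = \epsilon_t(S(h)x)$ on $(H_t)^S$ and $y \cdot h = \epsilon_s(yh)$ on $H_s$ is then checked using $S \circ \epsilon_t = \epsilon_s \circ S$ together with standard identities relating $\epsilon_s$, $\epsilon_t$, and multiplication by elements of the counital subalgebras. The hexagon and triangle coherence diagrams for a monoidal functor finally unwind to elementary identities on tensor factors, since all the isomorphisms involved are essentially flips or identity maps on the underlying vector spaces.

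The most delicate step I anticipate is the unit equivariance, since a direct computation of $S(\epsilon_t(S(h)x))$ introduces an $S^2(h)$ that must be absorbed using properties specific to how $H_s$ interacts with $S^2$ and the weak multiplicativity of $\epsilon$. The monoidal constraint $\phi_{M,N}$, by contrast, is essentially forced by the anti-coalgebra axiom and is a one-line check; once both pieces are in place, coherence is routine bookkeeping.
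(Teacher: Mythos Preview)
Your proposal is correct and is precisely the argument the paper has in mind: the paper gives no proof at all for this lemma, stating only that it ``follows easily from the fact that $S$ is an anti-homomorphism of coalgebras,'' and your verification via the flip map and the identity $\Delta(S(h)) = S(h_2)\otimes S(h_1)$ is exactly that.

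One small refinement on the unit isomorphism: rather than $S|_{H_t}$, take the inverse bijection $(S|_{H_s})^{-1}: H_t \to H_s$. Equivariance is then immediate: applying $S$ to the desired identity $\phi(\epsilon_t(S(h)x)) = \epsilon_s(\phi(x)h)$ gives $\epsilon_t(S(h)x) = S(\epsilon_s(\phi(x)h)) = \epsilon_t(S(h)S(\phi(x))) = \epsilon_t(S(h)x)$, using only $S\circ\epsilon_s = \epsilon_t\circ S$. This sidesteps the $S^2$ you anticipated having to absorb, and avoids needing any further identities.
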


We will frequently use throughout the paper 
that additional bimodule structures are 
maintained by the monoidal products.

\begin{lemma} 
\label{yylem1.2}
Let $H$ be a weak Hopf algebra. Suppose that 
$M \in (H, A) \Bimod$ and $N \in (H, B) 
\Bimod$ for some algebras $A$ and $B$. 
Then $M \otensor^{\ell} N \in 
(H, A \otimes_{\kk} B) \Bimod$. A similar 
result holds for $\otensor^r$.
\end{lemma}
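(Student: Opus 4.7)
The plan is to equip $M \otensor^{\ell} N$ with the obvious right actions of $A$ and $B$ coming from the two tensor factors, namely $(m \otimes n) \cdot a = ma \otimes n$ and $(m \otimes n) \cdot b = m \otimes nb$, and then verify three things in sequence: that the subspace $\Delta(1)(M \otimes_{\kk} N)$ is stable under these actions, that the $A$- and $B$-actions commute with each other (hence combine into a right $A \otimes_{\kk} B$-action), and that the resulting right action commutes with the left $H$-action defined by $h \cdot (m \otimes n) = h_1 m \otimes h_2 n$.

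For the stability step, the key observation is that the defining identity $\sum m_i \otimes n_i = \sum 1_1 m_i \otimes 1_2 n_i$ for elements of $M \otensor^{\ell} N$ is preserved by right multiplication by $a \in A$ in the first factor precisely because the right $A$-action on $M$ commutes with the left $H$-action on $M$ (the bimodule condition on $M$); in particular it commutes with left multiplication by $1_1$. The corresponding statement for right multiplication by $b \in B$ in the second factor is symmetric, using that $N$ is an $(H,B)$-bimodule. The second step is immediate, since the $A$-action affects only the $M$ factor and the $B$-action only the $N$ factor, so they commute and assemble into the right action $(m \otimes n) \cdot (a \otimes b) = ma \otimes nb$ of $A \otimes_{\kk} B$.

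For the third step, I compute directly: for $h \in H$, $a \in A$, $b \in B$,
\[
h \cdot \bigl((m \otimes n) \cdot (a \otimes b)\bigr) = h_1(ma) \otimes h_2(nb) = (h_1 m) a \otimes (h_2 n) b = \bigl(h \cdot (m \otimes n)\bigr) \cdot (a \otimes b),
\]
invoking once again the bimodule conditions on $M$ and on $N$. The $\otensor^r$ case is handled identically, with the roles of left and right exchanged and with $\Delta(1)$ acting on the right. There is no real obstacle here; the entire content of the lemma is the bookkeeping observation that ``ambient'' bimodule structures on each tensor factor survive passage through the $H$-monoidal products, and the only nontrivial input is that the commuting right actions are needed to ensure compatibility with the idempotent $\Delta(1)$ cutting out the subspace.
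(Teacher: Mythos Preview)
Your proof is correct and is essentially the same argument as the paper's, just unpacked at the element level. The paper observes that the result is immediate from the bifunctoriality and bilinearity of $\otensor^{\ell}$: since right multiplication $r_a : M \to M$ is a left $H$-module map (the bimodule condition), the induced map $r_a \otensor^{\ell} 1_N$ is automatically a morphism in $H\lMod$, and bilinearity gives that the $A$- and $B$-actions commute; your three verification steps (stability of $\Delta(1)(M\otimes_\kk N)$, commutation of the $A$- and $B$-actions, compatibility with the left $H$-action) are exactly what these categorical words mean when spelled out.
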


\begin{proof}
This is immediate from the fact that the 
monoidal product $\otensor^{\ell}$ on 
$H \lMod$ is bifunctorial and bilinear. 
Thus, for example, given $a \in A$ the 
right multiplication map $r_a: M \to M$ 
induces a right multiplication map 
$r_a \otimes 1: M \otensor^{\ell} N 
\to M \otensor^{\ell} N$ and this makes 
$M \otensor^{\ell} N$ into a right 
$A$-module. 
\end{proof}

Let $W\in H\lMod$. Then the functor 
$W\otensor^{\ell} -$ is an exact functor 
$H\lMod \to H\lMod$. By the Eilenberg--Watts 
theorem, there is an $H$-bimodule, 
denoted by ${\mathbb F}^L(W)$, such that
$W\otensor^{\ell} -$ is naturally 
isomorphic to ${\mathbb F}^L(W)\otimes_H -$. 
Similarly the functor $-\otensor^{\ell} W$ 
is an exact functor $H\lMod \to H\lMod$ and 
there is an $H$-bimodule  
${\mathbb F}^R(W)$ such that 
$-\otensor^{\ell} W$ is naturally isomorphic 
to ${\mathbb F}^R(W)\otimes_H -$. The 
following lemma follows easily from the 
Eilenberg--Watts theorem and 
Lemma~\ref{yylem1.2}.

\begin{lemma}
\label{yylem1.3} 
Retain the above notation.
\begin{enumerate}
\item[(1)]
Both ${\mathbb F}^L(W)$ and ${\mathbb F}^R(W)$ 
are $H$-bimodules that are flat on the right. 
\item[(2)]
${\mathbb F}^L(W)= W\otensor^{\ell} H$ where 
the right $H$-module structure on 
$W\otensor^{\ell} H$ is determined by the 
right $H$-action on the second tensorand.
Consequently, ${\mathbb F}^L: W\to 
{\mathbb F}^L(W)$ is a functor from 
$H\lMod$ to $(H,H)\Bimod$.
\item[(3)]
${\mathbb F}^R(W)= H\otensor^{\ell} W$ where 
the right $H$-module structure on 
$H\otensor^{\ell} W$ is determined by the 
right $H$-action on the first tensorand. 
Consequently, ${\mathbb F}^R$ is a functor 
from $H\lMod$ to $(H,H)\Bimod$.
\item[(4)]
For $W_1,W_2\in H\lMod$, we have canonical 
$H$-bimodule isomorphisms 
$\mathbb{F}^L(H_t) \cong H$, 
$\mathbb{F}^R(H_t) \cong H$,
$${\mathbb F}^L(W_1\otensor^{\ell} W_2)
\cong {\mathbb F}^L(W_1)\otimes_H 
{\mathbb F}^L(W_2), \qquad {\text{and}} 
\qquad {\mathbb F}^R(W_1\otensor^{\ell} W_2)
\cong {\mathbb F}^R(W_2)\otimes_H 
{\mathbb F}^R(W_1),$$
which give ${\mathbb F}^L$ and 
${\mathbb F}^R$ the structure of 
monoidal functors from $H\lMod$ to 
$(H,H)\Bimod$.
\end{enumerate}
\end{lemma}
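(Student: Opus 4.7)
The plan is to unpack the Eilenberg--Watts theorem in the bimodule form and combine it with the bifunctorial bimodule-preservation property of $\otensor^\ell$ from Lemma~\ref{yylem1.2}. First, I would note that both $W \otensor^\ell -$ and $- \otensor^\ell W$ are exact and colimit-preserving functors $H\lMod \to H\lMod$ (exactness is given; preservation of colimits is inherited from $\otimes_\kk$ and the description $\otensor^\ell \cong \otimes_{H_t}$ recalled in the text). The Eilenberg--Watts theorem then supplies unique-up-to-iso $(H,H)$-bimodules ${\mathbb F}^L(W)$ and ${\mathbb F}^R(W)$ realizing these as $-\otimes_H -$ functors, and identifies the underlying set of ${\mathbb F}^L(W)$ with the image of the functor on $H$, namely $W \otensor^\ell H$. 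The left $H$-action is the one from $W \otensor^\ell H \in H\lMod$, and the right $H$-action is induced by functoriality from the right $H$-module structure of $H$. By bifunctoriality of $\otensor^\ell$, this induced right action is precisely right multiplication on the second tensorand, which is exactly the right $H$-module structure supplied by Lemma~\ref{yylem1.2} applied with $A = \kk$, $B = H$. This proves (2), and (3) follows symmetrically. For (1), right flatness of ${\mathbb F}^L(W)$ is immediate because ${\mathbb F}^L(W) \otimes_H - \cong W \otensor^\ell -$ is exact, and similarly for ${\mathbb F}^R(W)$.

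Functoriality of ${\mathbb F}^L$ and ${\mathbb F}^R$ is a formal consequence: a morphism $f \colon W_1 \to W_2$ in $H\lMod$ induces, by bifunctoriality, a natural transformation $W_1 \otensor^\ell - \Rightarrow W_2 \otensor^\ell -$, which by uniqueness in Eilenberg--Watts corresponds to a bimodule map ${\mathbb F}^L(W_1) \to {\mathbb F}^L(W_2)$; on underlying spaces it is simply $f \otimes \id_H$.

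For (4), I would use the monoidal structure of $(H\lMod, \otensor^\ell, H_t)$ together with Eilenberg--Watts uniqueness. The left unit constraint gives a natural isomorphism $H_t \otensor^\ell N \cong N$ of left $H$-modules, so by uniqueness ${\mathbb F}^L(H_t) \cong H$ as $(H,H)$-bimodules; the analogous argument with the right unit constraint gives ${\mathbb F}^R(H_t) \cong H$. For the main monoidal compatibility, the associator of $\otensor^\ell$ yields a natural isomorphism
\[
(W_1 \otensor^\ell W_2) \otensor^\ell N \;\cong\; W_1 \otensor^\ell (W_2 \otensor^\ell N)
\]
of left $H$-modules in $N$. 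Translating both sides through Eilenberg--Watts gives a natural isomorphism ${\mathbb F}^L(W_1 \otensor^\ell W_2) \otimes_H N \cong {\mathbb F}^L(W_1) \otimes_H {\mathbb F}^L(W_2) \otimes_H N$, hence the claimed bimodule isomorphism. For ${\mathbb F}^R$, the same associativity rewritten as $- \otensor^\ell (W_1 \otensor^\ell W_2) \cong (- \otensor^\ell W_1) \otensor^\ell W_2$ composes the representing bimodules in the opposite order, which accounts for the flip ${\mathbb F}^R(W_1 \otensor^\ell W_2) \cong {\mathbb F}^R(W_2) \otimes_H {\mathbb F}^R(W_1)$. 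The coherence axioms for a monoidal functor then follow from the pentagon and triangle axioms of $\otensor^\ell$ by naturality.

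The main technical point to verify carefully is that the Eilenberg--Watts identifications are genuinely bimodule isomorphisms and not merely left-module isomorphisms; concretely, one must check that the right $H$-action obtained from functoriality agrees with the right $H$-action coming from Lemma~\ref{yylem1.2}, and that the unitor and associator natural transformations are compatible with both module structures. Both checks are routine diagram chases using the bifunctoriality of $\otensor^\ell$, but this is the place where the weak (rather than strict) unitality of $\Delta$ has to be handled without oversight.
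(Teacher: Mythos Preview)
Your proposal is correct and follows essentially the same approach as the paper, which does not give a detailed proof but simply states that the lemma ``follows easily from the Eilenberg--Watts theorem and Lemma~\ref{yylem1.2}.'' You have accurately unpacked what this entails, including the identification of the representing bimodule as the value of the functor on $H$, the right-flatness from exactness, and the monoidal compatibility from associativity of $\otensor^\ell$.
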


Similar results hold in the category 
$\rMod H$ of right $H$-modules. If 
$V\in \rMod H$, we let $\mathbb{G}^L(V)$ 
be the $H$-bimodule such that 
$V\otensor^{r} -$ is naturally isomorphic 
to $-\otimes_H {\mathbb G}^L(W)$, and 
let $\mathbb{G}^R(V)$ be the $H$-bimodule 
such that $-\otensor^{r} V$ is naturally 
isomorphic to $- \otimes_H {\mathbb G}^R(V)$.

\begin{lemma}
\label{yylem1.4} Retain the above notation.
\begin{enumerate}
\item[(1)]
Both ${\mathbb G}^L(V)$ and 
${\mathbb G}^R(V)$ are $H$-bimodules that 
are flat on the left. 
\item[(2)]
${\mathbb G}^L(V)= V\otensor^{r} H$ where 
the left $H$-module structure on 
$V\otensor^{r} H$ is determined by the left 
$H$-action on the second tensorand. 
Consequently, ${\mathbb G}^L: V\to 
{\mathbb G}^L(V)$ is a functor from 
$\rMod H$ to $(H,H)\Bimod$.
\item[(3)]
${\mathbb G}^R(V)= H\otensor^{r} V$ where 
the left $H$-module structure on 
$H\otensor^{r} V$ is determined by the left 
$H$-action on the first tensorand.
Consequently, ${\mathbb G}^R$ is a 
functor from $\rMod H$ to $(H,H)\Bimod$.
\item[(4)]
For $V_1,V_2\in \rMod H$, we have canonical 
$H$-bimodule isomorphisms 
$\mathbb{G}^L(H_s) \cong H$, 
$\mathbb{G}^R(H_s) \cong H$,
$${\mathbb G}^L(V_1\otensor^{r} V_2)
\cong {\mathbb G}^L(V_2)\otimes_H 
{\mathbb G}^L(V_1),
\qquad {\text{and}} \qquad 
{\mathbb G}^R(V_1\otensor^{r} V_2)
\cong {\mathbb G}^R(V_1)\otimes_H 
{\mathbb G}^R(V_2),$$
\end{enumerate}
which give ${\mathbb G}^L$ and 
${\mathbb G}^R$ the structure of monoidal 
functors from $\rcatMod H$ to 
$(H,H)\Bimod$.
\end{lemma}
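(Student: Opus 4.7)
The plan is to prove Lemma~\ref{yylem1.4} as the formal right-module mirror of Lemma~\ref{yylem1.3}: the entire structure of the argument carries over after swapping $H\lMod$ with $\rMod H$, $\otensor^\ell$ with $\otensor^r$, and $H_t$ with $H_s$. One could alternatively deduce it by applying Lemma~\ref{yylem1.3} to a suitable coopposite weak bialgebra $H^{\cop}$, but the direct approach is cleanest and avoids setting up the translation. The essential input is that $\otensor^r$ is an exact, bilinear bifunctor $\rMod H \times \rMod H \to \rMod H$, and that additional bimodule structures are preserved, which is the $\otensor^r$-version of Lemma~\ref{yylem1.2} (proved identically).

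For parts (2) and (3), fix $V \in \rMod H$. The functors $F = V \otensor^r -$ and $G = - \otensor^r V$ from $\rMod H$ to itself are right exact and coproduct-preserving, so the Eilenberg--Watts theorem yields $H$-bimodules representing them via $-\otimes_H(-)$. Evaluating at $H$ identifies these bimodules with $V \otensor^r H$ and $H \otensor^r V$ respectively. By the right-module analogue of Lemma~\ref{yylem1.2} (with $V$ viewed as a $(\kk,H)$-bimodule and $H$ as an $(H,H)$-bimodule), each of these tensor products carries a canonical $H$-bimodule structure in which the new left $H$-action is on the second tensorand for $V \otensor^r H$ and on the first tensorand for $H \otensor^r V$; these are the descriptions asserted in (2) and (3). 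Functoriality of $\mathbb{G}^L$ and $\mathbb{G}^R$ in $V$ then follows from bifunctoriality of $\otensor^r$. Part (1) is immediate from this representation: since $\otensor^r$ is exact in each slot, $-\otimes_H \mathbb{G}^L(V)$ and $-\otimes_H \mathbb{G}^R(V)$ are exact on $\rMod H$, so both bimodules are flat as left $H$-modules.

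For part (4), since $H_s$ is the monoidal unit of $(\rMod H, \otensor^r, H_s)$, the unit isomorphisms $H_s \otensor^r W \cong W \cong W \otensor^r H_s$ combined with Eilenberg--Watts give natural equivalences $W \otimes_H \mathbb{G}^L(H_s) \cong W \cong W \otimes_H \mathbb{G}^R(H_s)$; evaluating at $W = H$ and tracking bimodule structures gives $\mathbb{G}^L(H_s) \cong H \cong \mathbb{G}^R(H_s)$ as $H$-bimodules. For the tensor-compatibility, applying associativity of $\otensor^r$ to $(V_1 \otensor^r V_2) \otensor^r W$ and iterating Eilenberg--Watts twice produces
\begin{equation*}
W \otimes_H \mathbb{G}^L(V_1 \otensor^r V_2) \;\cong\; (V_2 \otensor^r W) \otimes_H \mathbb{G}^L(V_1) \;\cong\; W \otimes_H \mathbb{G}^L(V_2) \otimes_H \mathbb{G}^L(V_1),
\end{equation*}
and setting $W = H$ (or invoking Yoneda on the representing bimodule) gives the order-reversed isomorphism $\mathbb{G}^L(V_1 \otensor^r V_2) \cong \mathbb{G}^L(V_2) \otimes_H \mathbb{G}^L(V_1)$. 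The analogous computation starting from $W \otensor^r (V_1 \otensor^r V_2)$ yields the un-reversed identity $\mathbb{G}^R(V_1 \otensor^r V_2) \cong \mathbb{G}^R(V_1) \otimes_H \mathbb{G}^R(V_2)$, completing the monoidal-functor structures.

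There is no substantial obstacle; the argument is entirely parallel to Lemma~\ref{yylem1.3}, and the only care required is the bookkeeping that tracks left and right $H$-actions through each Eilenberg--Watts identification, so that all constructed natural isomorphisms are genuinely bimodule isomorphisms rather than merely one-sided module isomorphisms.
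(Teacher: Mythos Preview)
Your proposal is correct and matches the paper's approach: the paper gives no separate proof of Lemma~\ref{yylem1.4}, introducing it only with ``Similar results hold in the category $\rMod H$ of right $H$-modules,'' and already frames Lemma~\ref{yylem1.3} itself as following ``easily from the Eilenberg--Watts theorem and Lemma~\ref{yylem1.2}.'' Your write-up is thus a faithful and slightly more detailed unpacking of exactly the argument the paper intends.
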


\begin{remark}
\label{yyrem1.5}
The results on the functors $\mathbb{F}$ 
and $\mathbb{G}$ above have easy extensions 
to bimodules.  For instance, suppose that 
$W \in (H, B) \Bimod$ and $M \in (H, C) 
\Bimod$ for $\kk$-algebras $B$ and $C$. 
We have an isomorphism of left $H$-modules 
$W \otensor^{\ell} M \cong \mathbb{F}^L(W) 
\otimes_{H} M$, for the $(H, H)$-bimodule 
$\mathbb{F}^L(W) \cong W \otensor^{\ell} H$, 
as in Lemma~\ref{yylem1.3}(2). The 
isomorphism is easily seen to hold at the 
level of $(H, B \otimes_{\kk} C)$-bimodules.  
Here, the right $B$ and $C$ structures on 
$W \otensor^{\ell} M$ come from 
Lemma~\ref{yylem1.2}.  By the same lemma, 
$\mathbb{F}^L(W) = W \otensor^{\ell} H$ 
has a right $(B \otimes_{\kk} H)$-module 
structure, and thus $\mathbb{F}^L(W) 
\otimes_{H} M$ maintains a right 
$B$-module structure, as well as a right 
$C$-module structure from $M$.  In this 
way, the isomorphism of functors 
$W \otensor^{\ell} - \cong \mathbb{F}^L(W) 
\otimes_{H} -$ holds as functors from 
$(H, C) \Bimod$ to 
$(H, B \otimes_{\kk} C) \Bimod$. 
Similarly, $\mathbb{F}^L(-)$ can be 
considered as a functor $(H, B) \Bimod 
\to (H, H \otimes_{\kk} B) \Bimod$.

Analogous bimodule extensions hold for 
the functors $\mathbb{F}^R, \mathbb{G}^L, 
\mathbb{G}^R$.
\end{remark}

In fact, the functors $\mathbb{F}$ and 
$\mathbb{G}$ are closely related.  This 
is one consequence of the fundamental 
theorem of Hopf modules for weak Hopf 
algebras, which we review next.

\begin{definition}
\label{yydef1.6}
Let $H$ be a weak Hopf algebra over $\kk$. 
A \emph{left-left Hopf module} is a 
$\kk$-space $M$ which is a left $H$-module 
via $\mu_M: H \otimes_{\kk} M \to M$ 
(where we write $hm := \mu_M(h \otimes m)$) 
and a left $H$-comodule via $\rho_M: M 
\to H \otimes_{\kk} M$ (where we write 
$\rho(m) =  m_{[-1]} \otimes m_{[0]}$), 
such that $ (hm)_{[-1]} \otimes (hm)_{[0]} 
=  h_1m_{[-1]} \otimes h_2 m_{[0]}$ 
for all $h \in H$ and $m \in M$.

When the sides of the actions and coactions 
are clear from context, we refer to a 
left-left Hopf module as simply a 
\emph{Hopf module}.
\end{definition}

Note that if $M$ is a Hopf module, then 
because of the final condition we have 
$ 1_1 m_{[-1]} \otimes 1_2 m_{[0]} = 
m_{[-1]} \otimes m_{[0]}$, and so the 
image of $\rho: M \to H \otimes_{\kk} M$ 
must land in $H \otensor^{\ell} M$.

\begin{theorem}[Fundamental theorem of 
Hopf modules]
\label{yythm1.7}
Let $H$ be weak Hopf algebra with 
counital subalgebras $H_t$ and $H_s$. 
If $M$ is a left-left Hopf module, then 
$M \cong H \otimes_{H_s} M^{\coinv}$ as 
Hopf modules, where
\[
M^{\coinv} = \left\{ m \in M : m_{[-1]} 
\otimes m_{[0]} 
= 1_1 \otimes 1_2 m \right\}
\]
is the subspace of coinvariants of $M$, 
and where $H \otimes_{H_s} M^{\coinv}$ is 
a left $H$-module and comodule via the 
usual structures on the left tensorand 
$H$.  The inverse isomorphisms are given 
by $f: H \otimes_{H_s} M^{\coinv} \to M$ 
with $f(h \otimes m) = hm$, and 
$g: M \to H \otimes_{H_s} M^{\coinv}$ 
with $g(m) = m_{[-2]} \otimes 
S(m_{[-1]})m_{[0]}$.
\end{theorem}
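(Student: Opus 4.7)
The plan is to verify in sequence: (a) that $M^{\coinv}$ is naturally a left $H_s$-module so $H \otimes_{H_s} M^{\coinv}$ makes sense as a Hopf module (with $H$-action and coaction coming from the left tensorand); (b) that $f$ and $g$ are well-defined; (c) that they are mutually inverse; and (d) that they intertwine the module and comodule structures. Well-definedness of $f$ on the balanced tensor product is immediate from associativity of the $H$-action. For $g$, the key subclaim is that $S(m_{[-1]}) m_{[0]} \in M^{\coinv}$; I would verify this by computing $\rho(S(m_{[-1]}) m_{[0]})$ via the Hopf module compatibility $\rho(hn) = h_1 n_{[-1]} \otimes h_2 n_{[0]}$, the anti-coalgebra property $\Delta \circ S = (S \otimes S) \circ \tau \circ \Delta$, and the antipode axiom $h_1 S(h_2) = \epsilon_t(h)$ applied to iterated Sweedler coactions, reducing everything to $1_1 \otimes 1_2 S(m_{[-1]}) m_{[0]}$.

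The two inverseness computations are the core. For $f \circ g = \id_M$, one gets $f(g(m)) = m_{[-2]} S(m_{[-1]}) m_{[0]} = \epsilon_t(m_{[-1]}) m_{[0]}$ by the antipode axiom on iterated coactions; since $\rho(m) \in H \otensor^{\ell} M$, this simplifies via $\epsilon_t(m_{[-1]}) m_{[0]} = \epsilon(1_1 m_{[-1]}) \, 1_2 m_{[0]} = \epsilon(m_{[-1]}) m_{[0]} = m$ using the ordinary comodule counit axiom. For $g \circ f = \id$, I would first observe that for $m \in M^{\coinv}$ the compatibility collapses to $\rho(hm) = h_1 \otimes h_2 m$ (using $\Delta(h)\Delta(1) = \Delta(h)$), so iteration gives $g(hm) = h_1 \otimes S(h_2) h_3 m = h_1 \otimes \epsilon_s(h_2) m$ via $S(h_1) h_2 = \epsilon_s(h)$ on the inner Sweedler; then balancing $\epsilon_s(h_2) \in H_s$ across the tensor and applying the counital identity $h_1 \epsilon_s(h_2) = h$ yields $h \otimes m$.

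Finally, $H$-linearity of $f$ is immediate, and $H$-linearity of $g$ follows because $\rho$ is an $H$-module map. Comodule compatibility of $f$ reduces to the formula $\rho(hm) = h_1 \otimes h_2 m$ for $m \in M^{\coinv}$, matching the coaction on the left tensorand of $H \otimes_{H_s} M^{\coinv}$. The main obstacle is the bookkeeping of the $H_s$-balancing: the coaction on $H \otimes_{H_s} M^{\coinv}$ is well-defined only because $\Delta(x) = 1_1 \otimes x 1_2$ for $x \in H_s$, and the identity $h_1 \epsilon_s(h_2) = h$ is what marries the weak antipode axioms to the tensor balancing. Tracking these points carefully in iterated Sweedler notation, while routine for ordinary Hopf algebras, is where the weak setting demands the most care.
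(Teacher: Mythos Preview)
Your proof plan is correct and follows the standard direct verification strategy for the fundamental theorem of Hopf modules; the computations you outline (particularly the use of $h_1 \epsilon_s(h_2) = h$ to collapse $g \circ f$ and of $\epsilon_t(m_{[-1]}) m_{[0]} = m$ for $f \circ g$) are exactly the right ones in the weak setting.

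The paper, however, does not give a proof at all: it simply cites \cite[Theorem 3.9]{BNS}, where the result is proved for right-right Hopf modules over finite-dimensional weak Hopf algebras, and observes that the translation to the left-left case is routine and that nothing in the argument uses finite-dimensionality. So your approach is not so much different from the paper's as it is a fleshing-out of the cited reference. What you gain is self-containment and an explicit record of which weak-Hopf identities ($\Delta(x) = 1_1 \otimes x1_2$ for $x \in H_s$, the counital identities $h_1\epsilon_s(h_2)=h$, etc.) are actually needed; what the paper gains is brevity, since the result is already in the literature and the authors use it only as a tool.
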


\begin{proof}
This is proved for right-right Hopf 
modules over finite-dimensional weak Hopf 
algebras in \cite[Theorem 3.9]{BNS}. The 
translation of the statement and proof to 
the left-left case is straightforward, and 
the proof works without change in the 
infinite-dimensional case.
\end{proof}

The following is one of the most basic 
examples of a Hopf module and the use of 
the fundamental theorem.

\begin{lemma}
\label{yylem1.8}
Let $H$ be a weak Hopf algebra and let 
$W \in H \lMod$.  
\begin{enumerate}
\item[(1)] 
The left $H$-module $H \otensor^{\ell} W$, 
is a left-left Hopf module via the comodule 
structure $\rho(h \otensor^{\ell} w) 
= h_1 \otimes (h_2 \otensor^{\ell} w)$.  
In particular, $H \otensor^{\ell} W 
\cong H \otimes_{H_s} W$ as left modules.
\item[(2)] 
$H \otensor^{\ell} W \cong H \otensor^r W^S$ 
as $H$-bimodules.
\item[(3)] 
For every $W \in H \catMod$ there is an 
isomorphism ${\mathbb F}^R(W)\cong 
{\mathbb G}^R(W^S)$ as $H$-bimodules. 
These isomorphisms are natural in $W$ 
and therefore ${\mathbb F}^R(-)\cong 
{\mathbb G}^R((-)^S)$ as functors 
$H \catMod \to (H, H) \Bimod$.
\end{enumerate} 
\end{lemma}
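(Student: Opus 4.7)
The plan is to prove parts (1), (2), and (3) in sequence, with (3) being essentially formal once (2) is established.

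For part (1), I would first directly verify that $H \otensor^{\ell} W$ with the specified coaction $\rho(h \otensor^{\ell} w) = h_1 \otimes (h_2 \otensor^{\ell} w)$ is a left-left Hopf module. Coassociativity and counitality of $\rho$ descend from those of $\Delta$, and the coaction lands in $H \otensor^{\ell}(H \otensor^{\ell} W)$ by the weak bialgebra axiom on $\Delta(1)$. Compatibility of action and coaction unravels to $\Delta$ being a ring homomorphism, combined with the diagonal formula for the left $H$-action on $H \otensor^{\ell} W$. Theorem~\ref{yythm1.7} then gives $H \otensor^{\ell} W \cong H \otimes_{H_s}(H \otensor^{\ell} W)^{\coinv}$ as Hopf modules. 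To identify the coinvariants with $W$, I would check that the map $w \mapsto 1 \otensor^{\ell} w$ is a well-defined $H_s$-linear bijection onto $(H \otensor^{\ell} W)^{\coinv}$, using the axiom $(\Delta \otimes \on{id})\Delta(1) = (\Delta(1) \otimes 1)(1 \otimes \Delta(1))$, with inverse given by applying the multiplication map $h \otensor^{\ell} w \mapsto hw$.

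For part (2), I would construct an explicit $H$-bimodule isomorphism through mutually inverse maps
\[
\phi: H \otensor^r W^S \to H \otensor^{\ell} W, \quad \phi(h \otensor^r w) = h_1 \otensor^{\ell} h_2 w,
\]
\[
\psi: H \otensor^{\ell} W \to H \otensor^r W^S, \quad \psi(h \otensor^{\ell} w) = h_1 \otensor^r S(h_2) w,
\]
where in each case $h_i w$ uses the original left $H$-action on $w \in W$. Well-definedness on the $\otensor^{\ell}$ and $\otensor^r$ quotients follows from $\Delta(h) = \Delta(h)\Delta(1)$ together with $S$ being anti-multiplicative. Left $H$-linearity of $\phi$ is immediate from multiplicativity of $\Delta$. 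The main calculation is right $H$-linearity, which reduces to
\[
h_1 g_1 \otensor^{\ell} h_2 \epsilon_t(g_2) w = h \cdot (g_1 \otensor^{\ell} \epsilon_t(g_2) w) = h \cdot (g \otensor^{\ell} w) = h_1 g \otensor^{\ell} h_2 w,
\]
the middle equality being the standard weak Hopf identity $g_1 \otimes \epsilon_t(g_2) = 1_1 g \otimes 1_2$. The composition $\phi\psi(h \otensor^{\ell} w)$ simplifies to $h_1 \otensor^{\ell} \epsilon_t(h_2) w$ via coassociativity and $h_{21}S(h_{22}) = \epsilon_t(h_2)$, hence equals $h \otensor^{\ell} w$ by the same identity. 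The verification $\psi\phi = \on{id}$ is symmetric: $\psi\phi(h \otensor^r w) = h_1 \otensor^r \epsilon_s(h_2) w = h_1 \epsilon_s(h_2) \otensor^r w = h \otensor^r w$, using the $H_s$-balance in $\otensor^r$ and the identity $h_1 \epsilon_s(h_2) = \epsilon_t(h_1) h_2 = h$.

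Part (3) then follows immediately from (2) via Lemmas~\ref{yylem1.3}(3) and \ref{yylem1.4}(3), which identify $\mathbb{F}^R(W) = H \otensor^{\ell} W$ and $\mathbb{G}^R(W^S) = H \otensor^r W^S$ as $H$-bimodules; naturality in $W$ is evident because the formula for $\phi$ depends on $W$ only through its $H$-action. The main obstacle is the right $H$-linearity check in (2): once one has the key weak Hopf identities $g_1 \otimes \epsilon_t(g_2) = 1_1 g \otimes 1_2$ and its $\epsilon_s$-mirror $\epsilon_s(g_1) \otimes g_2 = 1_1 \otimes g 1_2$ in hand, everything else is careful Sweedler bookkeeping with coassociativity and the antipode axioms.
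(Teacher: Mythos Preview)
Your proof is correct and uses the same maps as the paper, with one organizational difference in part (2). The paper obtains the bijection $\phi(g\otimes w)=g_1\otensor^{\ell} g_2 w$ \emph{from} the fundamental theorem in part (1)---identifying it as the inverse of the map $g$ in Theorem~\ref{yythm1.7}---and then only has to verify right $H$-linearity via the identity $g_1\otimes\epsilon_t(g_2)=1_1 g\otimes 1_2$. You instead write down the explicit inverse $\psi(h\otensor^{\ell} w)=h_1\otensor^r S(h_2)w$ and check $\phi\psi=\psi\phi=\mathrm{id}$ directly, which makes your part (2) logically independent of (1). This is perfectly valid and arguably more transparent, but it does duplicate work: the composite checks $\phi\psi=\mathrm{id}$ and $\psi\phi=\mathrm{id}$ are exactly the computations hidden inside the fundamental theorem's proof. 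The right $H$-linearity verification and part (3) are identical to the paper's.
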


\begin{proof} 
(1) As noted in the discussion after 
Definition~\ref{yydef1.6}, the structure 
map $\rho: H \otensor^{\ell} M \to 
H \otimes_{\kk} H \otensor^{\ell} M$ 
defining a left-left Hopf module 
structure on $H \otensor^{\ell} M$  can 
be assumed to land in 
$H \otensor^{\ell} (H \otensor^{\ell} M)$.
Then as in the statement we can simply 
take $\rho = \Delta \otensor^{\ell} 1$, 
and it is easy to check that this does 
define a left-left Hopf module.

By Theorem~\ref{yythm1.7}, there is an 
isomorphism $\psi: H \otensor^{\ell} W 
\cong H \otimes_{H_s} 
(H \otensor^{\ell} W)^{\coinv}$ as Hopf 
modules, where the left $H$-action of 
$H \otimes_{H_s} 
(H \otensor^{\ell} W)^{\coinv}$ is just 
by left multiplication.  

Recall that $H \otensor^{\ell} W = 
H \otimes_{H_t} W$.  Since $H_t$ is 
semisimple, $H$ is faithfully flat as a 
right $H_t$-module. Consequently, the 
map $i: W \to H \otimes_{H_t} W$ given 
by $i(w) = 1 \otimes w$ is injective.  We 
claim that 
$C:=(H \otensor^{\ell} W)^{\coinv} = i(W)$. 
First we show that 
$C=H^{\coinv} \otimes_{H_t} W$. By 
definition, $\Delta(1) (H \otimes_{H_t} W) 
= H \otimes_{H_t} W$. So if 
$\sum h_i \otimes w_i \in C$, where we can 
take $w_i$ to be left independent over 
$H_t$, that is $W = \bigoplus H_t w_i$, 
then 
\begin{gather*}
\rho(h_i \otimes w_i) = h_{i1} \otimes 
h_{i2} \otimes w_i 
= \Delta(1)(1 \otimes (h_i \otimes w_i)) 
= 1_1 \otimes 1_2(h_i \otimes w_i) 
= 1_1 \otimes 1_2 h_i \otimes 1_3 w_i \\
= 1_1  \otimes 1_2 1'_1 h_i \otimes 1'_2 w_i 
= 1_1 \otimes 1_2 h_i \otimes w_i 
= \Delta(1)(1 \otimes h_i) \otimes w_i,
\end{gather*}
where we have used $\Delta^2(1) = 
(\Delta(1) \otimes 1)
(1 \otimes \Delta(1))$. It follows that 
$C \subseteq H^{\coinv} \otimes_{H_t} W$, 
and the other inclusion 
$H^{\coinv} \otimes_{H_t} W \subseteq C$ 
follows in the same way. In a slightly 
different notation, we have
$(H \otensor^{\ell} W)^{\coinv} 
= H^{\coinv} \otensor^{\ell} W$. 
Second we show that $H^{\coinv} 
\otensor^{\ell} W=i(W)$. Note
that $H^{\coinv} = H_s$. If $y \in H_s$ 
then for $y \otimes w \in H^{\coinv} 
\otensor^{\ell} W$ we have 
\[
y \otimes w = (y \cdot 1) \otimes w 
= (1 \cdot S(y)) \otimes w 
= 1 \otimes S(y) w \in i(W),
\]
using that the right $H_t$-action on $H$ 
is given by the left 
$S(H_t) = H_s$-action on $H$. Conversely, 
any $1 \otimes w \in 
H^{\coinv} \otensor^{\ell} W$ so the claim 
that $C = i(W)$ is proved.

Identifying $i(W) = W$, we conclude that 
$H \otensor^{\ell} W \cong H 
\otimes_{H_s} W$ as left $H$-modules, where 
the left $H$-action on the $H \otimes_{H_s} W$ 
is just left multiplication on the left 
tensorand.  

(2) We need to show how the right $H$-module 
structure of $H \otensor^{\ell} W$, given by 
right multiplication on the first tensorand, 
transfers to $H \otimes_{H_s} W$ under the 
isomorphism of (1).

Recall that the isomorphism $f: H \otimes_{H_s} 
(H \otensor^{\ell} W)^{\coinv} \to 
H \otensor^{\ell} W$ given by the fundamental 
theorem is simply $f(g \otimes k \otimes w) 
= g \cdot (k \otimes w) = g_1k \otimes g_2 w$. 
Moreover, the proof of (1) showed that we 
have an isomorphism $W \to 
(H \otensor^{\ell} W)^{\coinv}$ given 
by $w \mapsto (1 \otimes w)$.  Altogether 
this shows that the left $H$-module 
isomorphism $\phi: H \otimes_{H_s} W 
\cong H \otensor^{\ell} W$ of (1) is given 
by the formula $\phi(g \otimes w) 
= g_1 \otimes g_2 w$.

We claim that in $H \otimes_{H_s} W$ the 
right $H$-module structure is given by 
$(g \otimes w) * h = g h_1 \otimes 
S(h_2) w$.  It is enough to show that 
assuming this formula $\phi$ becomes a 
right $H$-module homomorphism.  Using the 
fact that for any $h\in H$ one has 
$h_1 \otimes \epsilon_t(h_2) = 1_1 h 
\otimes 1_2$ 
\cite[Proposition 2.2.1(ii)]{NV}, we calculate
\begin{gather*}
\phi((g \otimes w) * h) 
= \phi(gh_1 \otimes S(h_2) w) 
= g_1h_1 \otimes g_2 h_2 S(h_3) w 
= g_1 h_1 \otimes g_2 
\epsilon_t(h_2)w  \\
= g_11_1 h \otimes g_21_2 w 
= g_1 h \otimes g_2 w 
= (g_1 \otimes g_2 w) \cdot h 
= \phi(g \otimes w) \cdot h.
\end{gather*}
Now identifying $H \otimes_{H_s} W = 
H \otensor^r W^S$, the map $\phi$ is an 
$H$-bimodule isomorphism 
$H \otensor^{\ell} W \to H \otensor^r W^S$.

(3) We have $H$-bimodule isomorphisms 
\[
{\mathbb F}^R(W)\cong  H \otensor^{\ell} W 
\cong  H \otensor^r W^S \cong 
{\mathbb G}^R(W^S)
\]
coming from Lemma~\ref{yylem1.3}(3), part 
(2), and Lemma~\ref{yylem1.4}(3).  It is 
easy to see that the isomorphism of part 
(2) is natural in $W$, so these isomorphisms 
define an isomorphism of functors 
$\mathbb{F}^R(-) \cong \mathbb{G}^R((-)^S)$.
\end{proof}

\begin{remark}
\label{yyrem1.9}
Suppose that $W$ is a $(H, B)$-bimodule for 
some other algebra $B$.  As noted in 
Remark~\ref{yyrem1.5}, $\mb{F}^R(W)$ will 
then be an $(H, H \otimes_{\kk} B)$-bimodule.  
Similarly, $\mb{G}^R(W^S) \cong 
H \otensor^r W^S$ is an 
$(H, H \otimes_{\kk} B)$-module.  
It is easy to check that the isomorphism 
${\mathbb F}^R(W) \cong {\mathbb G}^R(W^S)$ 
holds as $(H, H \otimes B)$-bimodules, and 
that the isomorphism of functors 
${\mathbb F}^R(-) \cong {\mathbb G}^R((-)^S)$ 
holds as functors $(H, B) \Bimod \to 
(H, H \otimes_{\kk} B) \Bimod$.  
\end{remark}

\section{Invertible objects}
\label{yysec2}

Let $(\mc{C}, \otimes, \mathbbm{1})$ be 
a monoidal category. We call an object 
$X$ in $\mc{C}$ \emph{invertible} if it 
has a left dual $X^*$ in the category in 
the sense of \cite[Section 2.10]{EGNO}, 
and where the associated maps 
$\ev_X: X^* \otimes X \to \mathbbm{1}$ 
and $\coev_X: \mathbbm{1} \to X \otimes 
X^*$ are isomorphisms. If $X$ is 
invertible then it also has a right dual 
$^* X$ (so $X$ is \emph{rigid}) and 
moreover $X^* \cong \leftidx{^*} X$. 
Equivalently one may define $X$ to be 
invertible if there is an object $Y$ such 
that $X \otimes Y \cong \mathbbm{1}$ and 
$Y \otimes X \cong \mathbbm{1}$; in this 
case one may choose evaluation and 
coevaluation maps for which $Y$ is a 
left dual of $X$.

We begin with some easy generalities 
about invertible objects.

\begin{lemma}
\label{yylem2.1}
Let $(\mc{C}, \otimes, \mathbbm{1})$ and 
$(\mc{C}', \otimes', \mathbbm{1}')$ be 
abelian monoidal categories and suppose 
that $F: \mc{C} \to \mc{C}'$ is a monoidal 
functor (where we suppress the 
isomorphisms $J_{XY}: F(X \otimes Y) \to 
F(X) \otimes F(Y)$ and $F(\mathbbm{1}) 
\to \mathbbm{1}'$).
\begin{enumerate}
\item[(1)] 
If $X \in \mc{C}$ is invertible then 
$F(X)$ is invertible in $\mc{C}'$.
\item[(2)] 
Suppose that $F$ is exact and faithful 
and that $X^*$ exists in $\mc{C}$. If 
$F(X)$ is invertible in $\mc{C}'$ then 
$X$ is invertible in $\mc{C}$.
\end{enumerate}
\end{lemma}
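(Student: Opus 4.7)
For part (1), the plan is to push the dual structure forward through $F$. By hypothesis $X$ has a left dual $X^*$ with evaluation $\ev_X \colon X^* \otimes X \to \mathbbm{1}$ and coevaluation $\coev_X \colon \mathbbm{1} \to X \otimes X^*$ that are both isomorphisms. Composing $F(\ev_X)$ and $F(\coev_X)$ with the structural isomorphisms $J_{A,B} \colon F(A \otimes B) \to F(A) \otimes' F(B)$ of the monoidal functor $F$ and its unit isomorphism $F(\mathbbm{1}) \cong \mathbbm{1}'$ produces morphisms $\widetilde{\ev} \colon F(X^*) \otimes' F(X) \to \mathbbm{1}'$ and $\widetilde{\coev} \colon \mathbbm{1}' \to F(X) \otimes' F(X^*)$. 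The coherence axioms for a monoidal functor imply that these satisfy the zig-zag identities, so $F(X^*)$ is a left dual of $F(X)$ in $\mc{C}'$. Since $\widetilde{\ev}$ and $\widetilde{\coev}$ are composites of isomorphisms they are themselves isomorphisms, and hence $F(X)$ is invertible.

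For part (2), the strategy is to reverse this construction and then reflect back to $\mc{C}$ using that an exact faithful functor between abelian categories reflects isomorphisms. I would first record this reflection property: if $F(f)$ is an isomorphism then by exactness $F(\ker f) \cong \ker F(f) = 0$, and faithfulness reflects zero objects (since $\id_A = 0_A$ whenever $F(A) = 0$), so $\ker f = 0$; dually $\coker f = 0$, whence $f$ is an isomorphism in the abelian category $\mc{C}$.

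Next, since $X^*$ is assumed to exist, the same pushforward construction as in part (1) still produces $\widetilde{\ev}$ and $\widetilde{\coev}$ exhibiting $F(X^*)$ as a left dual of $F(X)$ in $\mc{C}'$. Because $F(X)$ is invertible, it also admits a left dual $Z$ whose evaluation and coevaluation are already isomorphisms. Left duals in a monoidal category are unique up to a unique isomorphism intertwining the structure maps, so the canonical iso $Z \cong F(X^*)$ identifies the evaluation of $Z$ with $\widetilde{\ev}$, forcing $\widetilde{\ev}$ (and similarly $\widetilde{\coev}$) to be an isomorphism. Untwisting by the structural isomorphisms $J$ shows that $F(\ev_X)$ and $F(\coev_X)$ are isomorphisms, and the reflection property above then yields that $\ev_X$ and $\coev_X$ are isomorphisms in $\mc{C}$. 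Hence $X$ is invertible.

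The main obstacle is the uniqueness-of-duals step in part (2): one has to track carefully how the canonical comparison between two left duals of the same object transports the structure maps of one into those of the other, taking full account of the coherence isomorphisms $J$. This is standard monoidal category theory, but the bookkeeping requires some care. Apart from this, both parts reduce to routine diagram chases with the structural data of $F$.
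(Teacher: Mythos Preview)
Your proposal is correct and follows essentially the same approach as the paper. The paper's proof is simply terser: it cites \cite[Exercise 2.10.6]{EGNO} for the fact that $F(X^*)$ with structure maps $F(\ev_X)$, $F(\coev_X)$ is a left dual of $F(X)$, and then asserts directly that these are isomorphisms when $F(X)$ is invertible (leaving the uniqueness-of-duals step you spell out implicit), before invoking the same reflection property of exact faithful functors.
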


\begin{proof}
(2) It is standard that $F(X)$ has a 
left dual $F(X)^*$ in $\mc{C}'$ which 
can be identified with $F(X^*)$, where 
the evaulation and coevaulation maps 
are identified with $F(\on{ev}_X)$ and 
$F(\on{coev}_X)$ \cite[Exercise 2.10.6]{EGNO}.  
Let $F(X)$ be invertible in $\mc{C}'$, 
so that $F(\on{ev}_X)$ and 
$F(\on{coev}_X)$ are isomorphisms in 
$\mc{C}'$.  Since $F$ is exact and 
faithful, a morphism $f$ in $\mc{C}$ is 
an monomorphism (resp. epimorphism) if 
$F(f)$ is.  Since $F(\on{ev}_X)$ and 
$F(\on{coev}_X)$ are isomorphisms, 
$\on{ev}_X$ and $\on{coev}_X$ are 
isomorphisms as well and hence $X$ is 
invertible in $\mc{C}$.
\end{proof}

In nice cases, a one-sided inverse 
suffices for an object to be invertible.

\begin{lemma}
\label{yylem2.2}
Let $(\mc{C}, \otimes, \mathbbm{1})$ 
be a semisimple abelian monoidal category 
with finitely many simple objects up 
to isomophism.  If 
$V \otimes W \cong \mathbbm{1}$, then 
$W \otimes V \cong \mathbbm{1}$ and so 
$V$ and $W$ are invertible in $\mc{C}$.
\end{lemma}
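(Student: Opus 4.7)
The plan is to pass to the Grothendieck ring $R = K_0(\mc{C})$ and use an elementary fact about $\mathbb{Z}$-linear endomorphisms of free abelian groups. In the semisimple finite setting, $R$ is simultaneously a free abelian group of finite rank and a ring, and two objects of $\mc{C}$ are isomorphic if and only if their classes in $R$ agree. It therefore suffices to prove the purely ring-theoretic statement that $[V][W] = 1$ in $R$ implies $[W][V] = 1$.

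First, let $S_1, \ldots, S_n$ be representatives of the distinct isomorphism classes of simple objects of $\mc{C}$. By semisimplicity, every object decomposes (uniquely up to reordering) as a direct sum of the $S_i$, so $R$ is free abelian on $[S_1], \ldots, [S_n]$. Since $\otimes$ is bi-additive (automatic in any of the abelian monoidal settings arising in this paper), it descends to an associative ring multiplication on $R$ with unit $[\mathbbm{1}]$, and from $V \otimes W \cong \mathbbm{1}$ we get $[V][W] = 1_R$.

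Next, consider the $\mathbb{Z}$-linear left-multiplication map $L_V \colon R \to R$, $x \mapsto [V] \cdot x$. For every $y \in R$ we have $L_V([W] y) = [V][W] y = y$, so $L_V$ is surjective. A surjective $\mathbb{Z}$-linear endomorphism of $\mathbb{Z}^n$ is automatically bijective (tensor with $\mathbb{Q}$: a surjection between finite-dimensional $\mathbb{Q}$-spaces of equal dimension is an isomorphism, so the integer matrix of $L_V$ has nonzero determinant and hence is injective already over $\mathbb{Z}$). Thus $L_V$ is invertible, and composing $L_V L_W = L_{[V][W]} = \operatorname{id}$ with $L_V^{-1}$ gives $L_W = L_V^{-1}$, whence $L_W L_V = \operatorname{id}$ and $[W][V] = 1_R$.

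Finally, $[W \otimes V] = [W][V] = [\mathbbm{1}]$ in $R$ forces $W \otimes V \cong \mathbbm{1}$ in $\mc{C}$ by the semisimplicity observation above. Combined with $V \otimes W \cong \mathbbm{1}$, this implies that $V$ and $W$ are invertible in $\mc{C}$, via the equivalent formulation of invertibility recalled at the beginning of Section~\ref{yysec2} ($W$ then serves as a left dual to $V$ with the evident evaluation and coevaluation maps). The only mild subtlety is verifying that $\otimes$ descends to a ring structure on $K_0$; beyond this bookkeeping, the argument is a standard Grothendieck-ring manipulation with no real obstacle.
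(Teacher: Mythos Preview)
Your proof is correct and takes a genuinely different route from the paper's. The paper argues directly at the level of the category: it shows that $G = W \otimes -$ permutes the isomorphism classes of simples, then checks that the induced maps on the endomorphism division rings $D_i = \End_{\mc{C}}(S_i)$ are isomorphisms, and concludes that $G$ is a full and faithful exact functor, hence an autoequivalence with quasi-inverse $F = V \otimes -$. Your argument instead passes to the Grothendieck ring $K_0(\mc{C}) \cong \mathbb{Z}^n$ and reduces everything to the elementary fact that a surjective $\mathbb{Z}$-linear endomorphism of a finitely generated free abelian group is bijective, so one-sided units in $K_0(\mc{C})$ are two-sided. Your approach is shorter and avoids the division-ring bookkeeping entirely; the paper's approach has the mild advantage of exhibiting $V \otimes -$ and $W \otimes -$ as mutually inverse autoequivalences, which is a slightly stronger conclusion than the bare statement of the lemma. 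Both arguments rely on $\otimes$ being additive in each variable so that it descends to $K_0$ (respectively, preserves direct sums), which you rightly flag as the only point needing care.
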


\begin{proof}
The hypothesis implies that 
$F = V \otimes -$ and 
$G = W \otimes -$ satisfy 
$G \circ F \cong \on{id}$ as exact 
endofunctors of $\mc{C}$.  Choose 
representatives $S_1, \dots, S_n$ 
of the isomorphism classes of simple 
objects.  We have $S_i \cong G(F(S_i))$, 
and writing $F(S_i) = \bigoplus M_j$ 
with each $M_j$ simple, then $G(F(S_i)) = 
\bigoplus G(M_j)$, so there is some $j$ 
such that $G(M_j) = S_i$.  Thus we can 
choose some function 
$\rho: \{1,\dots, n\} \to \{1,\dots, n\}$ 
such that $S_i = G(S_{\rho(i)})$ for 
all $i$. Clearly $\rho$ must be injective 
and hence bijective. So $G$ acts as a 
permutation on the isomorphism classes 
of simple modules and $F$ acts by the 
inverse permutation.  Let $D_i = 
\End_{\mc{C}}(S_i)$ be the division ring 
of endomorphisms of each $S_i$.  For 
each $i$, applying $F$ to morphisms 
gives an endomorphism $f_i : D_i \to 
D_{\rho(i)}$ and similarly applying 
$G$ gives $g_i: D_{\rho_i} \to D_i$. 
By assumption $g_i \circ f_i = 1_{D_i}$.  
So $g_i$ is surjective, and it is also 
injective as its domain is a division 
ring.  Thus each $g_i$ is an isomorphism.  
Since $G$ is exact and all objects are 
direct sums of simples, it follows that 
$G$ is full and faithful and hence an 
autoequivalence.  Then $F$ must be a 
quasi-inverse to $G$ and we must have 
$F \circ G \cong \on{id}$ as well.  Thus 
$(W \otimes V) \otimes -$ is isomorphic 
to the identity functor, and necessarily 
$W \otimes V \cong \mathbbm{1}$ as well.  
\end{proof}

Next, we record some facts about invertible 
objects in the monoidal category $H \lMod$ 
for a weak Hopf algebra $H$, which follow 
quickly from the lemmas above. Recall that 
for each $M \in H \catMod$, $M$ has a 
natural underlying $H_t$-bimodule 
structure, and this gives a monoidal functor 
from $\mc{C} = (H\catMod, \otensor^{\ell}, H_t)$ 
to $\underline{\mc{C}} := ((H_t, H_t) \Bimod, \otimes_{H_t}, H_t)$.  In the proof below 
we denote the image of $M$ under this functor 
by $\underline{M}$.

\begin{lemma}
\label{yylem2.3}
Let $H$ be a weak Hopf algebra and keep the 
notation above.
\begin{enumerate}
\item[(1)]
$V \in H \catMod$ is invertible in $\mc{C}$ 
if and only if $\underline{V}$ is 
invertible in $\underline{\mc{C}}$.
\item[(2)] 
If $X,Y \in (H_t, H_t)\Bimod$ and 
$X \otimes_{H_t} Y \cong H_t$ as bimodules, 
then $X$ and $Y$ are invertible in 
$\underline{\mc{C}}$. 
\item[(3)]
If $V, W \in H\catMod$ and 
$V \otensor^{\ell} W \cong \mathbbm{1}$, 
then $V$ and $W$ are invertible in $\mc{C}$.
\end{enumerate}
\end{lemma}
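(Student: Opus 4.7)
The plan is to establish (1), then (2), and deduce (3) as a combination of the two.

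For part (1), the forward direction is immediate from Lemma~\ref{yylem2.1}(1) applied to the monoidal functor $\mc{C} \to \underline{\mc{C}}$ that sends each $H$-module to its underlying $H_t$-bimodule. For the converse, I would apply Lemma~\ref{yylem2.1}(2), which has two hypotheses to verify: that this functor is exact and faithful, and that $V^*$ exists in $\mc{C}$. Exactness and faithfulness are immediate because the underlying $\kk$-vector space of $V$ agrees with that of $\underline{V}$, so kernels, cokernels, and vanishing of morphisms are detected at the level of $\kk$-vector spaces and hence are preserved and reflected. For the existence of $V^*$, recall from Section~\ref{yysec1} that any finite-dimensional $V \in H\catMod$ has a left dual. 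If $\underline{V}$ is invertible in $(H_t,H_t)\Bimod$ then $\underline{V}$ is in particular finitely generated projective as a left $H_t$-module; since $H_t$ is finite-dimensional over $\kk$, this forces $V$ itself to be finite-dimensional over $\kk$, so $V^*$ exists in $\mc{C}$ and Lemma~\ref{yylem2.1}(2) applies.

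For part (2), I would apply Lemma~\ref{yylem2.2} to $\underline{\mc{C}}$. The bimodule category $(H_t, H_t) \Bimod$ is equivalent to the category of left modules over the enveloping algebra $H_t \otimes_{\kk} H_t^{\op}$. Because $H_t$ is a finite-dimensional \emph{separable} $\kk$-algebra, as recorded in Section~\ref{yysec1}, the enveloping algebra $H_t \otimes_{\kk} H_t^{\op}$ is again finite-dimensional and semisimple. Thus $\underline{\mc{C}}$ is a semisimple abelian monoidal category with only finitely many isomorphism classes of simple objects, and Lemma~\ref{yylem2.2} immediately yields the conclusion.

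Part (3) is then a direct consequence of (1) and (2): since the underlying-bimodule functor $\mc{C} \to \underline{\mc{C}}$ is monoidal, sending $\otensor^{\ell}$ to $\otimes_{H_t}$ and $H_t$ to $H_t$, the hypothesis $V \otensor^{\ell} W \cong H_t$ in $\mc{C}$ yields $\underline{V} \otimes_{H_t} \underline{W} \cong H_t$ in $\underline{\mc{C}}$. Part (2) supplies invertibility of $\underline{V}$ and $\underline{W}$ in $\underline{\mc{C}}$, and part (1) then transports this back to invertibility of $V$ and $W$ in $\mc{C}$.

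The only real subtlety is the verification in part (1) that $V$ is finite-dimensional, which in turn relies on the standard observation that invertibility in $(H_t, H_t) \Bimod$ forces $\underline{V}$ to be finitely generated projective on each side. All other steps reduce to formal consequences of the monoidality of the underlying-bimodule functor, the separability of $H_t$, and the two preceding lemmas.
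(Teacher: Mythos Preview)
Your proposal is correct and follows essentially the same approach as the paper's proof: both use Lemma~\ref{yylem2.1} for part (1) (with the same observation that invertibility of $\underline{V}$ forces finite-dimensionality so that $V^*$ exists), invoke separability of $H_t$ and Lemma~\ref{yylem2.2} for part (2), and combine the two via the monoidality of the underlying-bimodule functor for part (3).
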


\begin{proof}
(1) If $V$ is invertible in $\mc{C}$, 
$\underline{V}$ is invertible in 
$\underline{\mc{C}}$ by 
Lemma~\ref{yylem2.1}(1).  Conversely if 
$\underline{V}$ is invertible in 
$\underline{\mc{C}}$, then in particular 
it must be a $(H_t, H_t)$-bimodule which 
is finitely generated on both sides, so 
it is finite-dimensional over $\kk$. So 
$\dim_{\kk} V < \infty$ and hence $V$ has 
a left dual in $\mc{C}$. The functor 
$\underline{(-)}: \mc{C} \to 
\underline{\mc{C}}$ is clearly exact and 
faithful, so $V$ is invertible in $\mc{C}$ 
by Lemma~\ref{yylem2.1}(2).

(2) $(H_t, H_t) \Bimod \simeq H_t 
\otimes_{\kk} H_t^{op} \catMod$.  The 
algebra $H_t$ is separable over the base 
field $\kk$ in a weak Hopf algebra, so 
$H_t \otimes_{\kk} H_t^{op}$ is again 
semisimple. Thus the category 
$H_t \otimes_{\kk} H_t^{op} \catMod$
satisfies the hypotheses of 
Lemma~\ref{yylem2.2}, and the result 
follows.

(3) Since $V \otensor^{\ell} W \cong 
\mathbbm{1}$, applying the monoidal 
functor $\underline{(-)}$ we have 
$\underline{V} \otimes_{H_t} 
\underline{W} \cong H_t$.  Then 
$\underline{V}$ is invertible in 
$(H_t, H_t) \Bimod$ by (2), and so 
$V$ is invertible in $H \lMod$ by (1). 
\end{proof}

We also have the following useful property 
of invertible objects in $\rcatMod H$.

\begin{lemma}
\label{yylem2.4}
Let $H$ be a weak Hopf algebra and let 
$V$ be an invertible object in $(\rcatMod H, 
\otensor^{r}, \mathbbm{1})$, the monoidal 
category of right $H$-modules. Then 
$V \cong V^{S^2}$.
\end{lemma}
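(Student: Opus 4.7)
The plan is to factor the desired isomorphism as $V \cong V^{**} \cong V^{S^2}$, using abstract monoidal-category facts for the first step and a direct computation of the right-module action on the double dual for the second.

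For the isomorphism $V^{**} \cong V$, I will use the observation recorded at the start of Section~\ref{yysec2} that an invertible object $X$ in any monoidal category is rigid, with a right dual ${}^*X$ satisfying $X^* \cong {}^*X$. Since $V$ is by definition a left dual of its own right dual ${}^*V$, we have $({}^*V)^* \cong V$. Applying $(-)^*$ to the isomorphism $V^* \cong {}^*V$ then yields $V^{**} \cong ({}^*V)^* \cong V$. (Equivalently, this is just the uniqueness of inverses in a monoidal category: $V^*$ is a two-sided inverse of both $V$ and $V^{**}$.)

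For the isomorphism $V^{**} \cong V^{S^2}$, note first that invertibility of $V$ forces $V$ to be finite-dimensional (by the $\rMod H$-analog of Lemma~\ref{yylem2.3}), so $V^* = \Hom_\kk(V,\kk)$ is a right $H$-module with action $(\phi \cdot h)(v) = \phi(v \cdot S(h))$ --- the right-module analog of the formula for left duals recalled in Section~\ref{yysec1} --- and iterating, $V^{**}$ has action $(\psi \cdot h)(\phi) = \psi(\phi \cdot S(h))$. The canonical $\kk$-linear evaluation bijection $\iota\colon V \to V^{**}$, $v \mapsto (\phi \mapsto \phi(v))$, then satisfies
\[
(\iota(v) \cdot h)(\phi) = (\phi \cdot S(h))(v) = \phi(v \cdot S^2(h)) = \iota(v \cdot S^2(h))(\phi),
\]
so $\iota$ is an isomorphism $V^{S^2} \xrightarrow{\sim} V^{**}$ of right $H$-modules. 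Combining gives $V \cong V^{S^2}$, as desired. The entire argument is formal given the duality framework; the only bookkeeping is confirming the right-module dual action formula, which is a direct adaptation of its left-module counterpart in Section~\ref{yysec1}.
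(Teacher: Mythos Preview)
Your proof is correct and follows essentially the same approach as the paper: both arguments establish $V \cong V^{**}$ via uniqueness of inverses (your parenthetical remark is exactly the paper's reasoning), and both identify $V^{**} \cong V^{S^2}$ from the explicit formula for the left dual in $\rMod H$. You supply more detail on the double-dual computation than the paper does, but the structure of the argument is identical.
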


\begin{proof}
If $V \in \rcatMod H$ is invertible, it must 
have a left dual $V^* = \Hom_\kk(V, \kk)$ 
with the right action $(\phi \cdot h)(v) 
= \phi(vS(h))$. But since $V$ is invertible, 
it also has a right dual $^*V$ and 
$^* V \cong V^*$, both being the inverse 
of $V$. In particular, $V^*$ is also 
invertible with inverse $V$. But also 
since $V^*$ is invertible, its inverse 
must be its left dual $V^{**}$.  Thus 
$V^{**} \cong V$.
 
On the other hand, from the formula for 
$V^*$ we see that $V^{**} \cong V^{S^2}$.  
\end{proof}

\begin{lemma}
\label{yylem2.5}
Let $H$ be a weak Hopf algebra and let 
$U$ be an invertible $H$-bimodule.
\begin{enumerate} 
\item[(1)]
Let $W$ be an invertible object in 
$(H\lMod, \otensor^{\ell}, \mathbbm{1})$. 
Then both ${\mathbb F}^L(W)$ and 
${\mathbb F}^R(W)$ are invertible 
$H$-bimodules. As a consequence,
$W\otensor^{\ell} U$ and 
$U\otensor^{\ell} W$ are invertible 
$H$-bimodules.
\item[(2)]
Let $V$ be an invertible object in 
$(\rcatMod H, \otensor^{r}, \mathbbm{1})$. 
Then both ${\mathbb G}^L(V)$ and 
${\mathbb G}^R(V)$ are invertible 
$H$-bimodules. As a consequence,
$V\otensor^{r} U$ and $U\otensor^{r} V$
are invertible $H$-bimodules.
\end{enumerate}
\end{lemma}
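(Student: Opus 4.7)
The plan is to deduce everything from the monoidal functor properties of $\mathbb{F}^L, \mathbb{F}^R, \mathbb{G}^L, \mathbb{G}^R$ already established in Lemmas~\ref{yylem1.3} and \ref{yylem1.4}, together with Lemma~\ref{yylem2.1}(1) and the general fact that $((H,H)\Bimod, \otimes_H, H)$ is a monoidal category in which invertibility is preserved under tensor products.

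For part (1), I would first invoke Lemma~\ref{yylem1.3}(4), which asserts that both $\mathbb{F}^L$ and $\mathbb{F}^R$ are monoidal functors from $(H\lMod, \otensor^{\ell}, H_t)$ to $((H,H)\Bimod, \otimes_H, H)$. Since monoidal functors send invertible objects to invertible objects by Lemma~\ref{yylem2.1}(1), it follows immediately that $\mathbb{F}^L(W)$ and $\mathbb{F}^R(W)$ are invertible $H$-bimodules whenever $W$ is invertible in $H\lMod$. For the consequence regarding $W \otensor^{\ell} U$ and $U \otensor^{\ell} W$, I would use the bimodule extension described in Remark~\ref{yyrem1.5}: treating $U$ as an $(H,H)$-bimodule, one has canonical isomorphisms $W \otensor^{\ell} U \cong \mathbb{F}^L(W) \otimes_H U$ and $U \otensor^{\ell} W \cong U \otimes_H \mathbb{F}^R(W)$ of $(H,H)$-bimodules. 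Since the tensor product $\otimes_H$ of two invertible $H$-bimodules is again invertible (the obvious candidate inverses do the job), both $W \otensor^{\ell} U$ and $U \otensor^{\ell} W$ are invertible $H$-bimodules.

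Part (2) follows by the symmetric argument: Lemma~\ref{yylem1.4}(4) says $\mathbb{G}^L$ and $\mathbb{G}^R$ are monoidal functors from $(\rcatMod H, \otensor^{r}, H_s)$ to $((H,H)\Bimod, \otimes_H, H)$, so invoking Lemma~\ref{yylem2.1}(1) again yields that $\mathbb{G}^L(V)$ and $\mathbb{G}^R(V)$ are invertible $H$-bimodules. The bimodule analog of Remark~\ref{yyrem1.5} for the right-module side identifies $V \otensor^{r} U \cong V \otimes_H \mathbb{G}^?(V) \otimes_H U$-type expressions; concretely, $V \otensor^r U \cong U \otimes_H \mathbb{G}^L(V)$ and $U \otensor^r V \cong \mathbb{G}^R(V) \otimes_H U$ as $H$-bimodules, and closure of invertibility under $\otimes_H$ again closes the argument.

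There is no serious obstacle here; the only genuine content beyond invoking previous results is the observation that invertibility in $((H,H)\Bimod, \otimes_H, H)$ is preserved under $\otimes_H$, which is formal. The main care required is bookkeeping with the bimodule structures — in particular, making sure that the isomorphisms $W \otensor^{\ell} U \cong \mathbb{F}^L(W) \otimes_H U$ actually hold as $(H,H)$-bimodule isomorphisms and not merely as one-sided module isomorphisms. This is exactly what Remark~\ref{yyrem1.5} supplies, so the proof should be short.
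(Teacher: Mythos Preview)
Your approach is exactly the paper's: invoke Lemma~\ref{yylem1.3}(4)/\ref{yylem1.4}(4) and Lemma~\ref{yylem2.1}(1) to get invertibility of $\mathbb{F}^L(W), \mathbb{F}^R(W), \mathbb{G}^L(V), \mathbb{G}^R(V)$, then use Remark~\ref{yyrem1.5} to realize $W \otensor^{\ell} U$, $U \otensor^{\ell} W$, etc.\ as $\otimes_H$-products of two invertible bimodules. One bookkeeping slip (precisely the kind you flagged): by definition $- \otensor^{\ell} W \cong \mathbb{F}^R(W) \otimes_H -$, so the correct identification is $U \otensor^{\ell} W \cong \mathbb{F}^R(W) \otimes_H U$, not $U \otimes_H \mathbb{F}^R(W)$ as you wrote, and similarly $U \otensor^r V \cong U \otimes_H \mathbb{G}^R(V)$ rather than $\mathbb{G}^R(V) \otimes_H U$; this does not affect the conclusion, since either ordering is a tensor product of two invertible bimodules, but the isomorphisms you stated are not the ones that actually hold.
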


\begin{proof} 
(1) By Lemma \ref{yylem1.3}(4), 
${\mathbb F}^L(W)$ is invertible in 
$(H, H) \Bimod$. Recall that 
${\mathbb F}^L(W)=W\otensor^{\ell} H$.  
Since $U$ is an $H$-bimodule, the 
isomorphism $W \otensor^{\ell} U 
\cong \mathbb{F}^L(W) \otimes_H U$ holds 
as $H$-bimodules, by 
Remark~\ref{yyrem1.5}.  The latter 
bimodule is a tensor product of two 
invertible bimodules and so is invertible.  
Similarly, ${\mathbb F}^R(W)$ is 
invertible and thus so is 
$U\otensor^{\ell} W$.

The proof of (2) is similar.
\end{proof}

In the rest of this section, we prove 
some elementary results about how 
invertible bimodules interact with 
twists by automorphisms.  These results 
are not specific to (weak) Hopf algebras.

If $A$ is an algebra, then it is 
well-known that an object $U \in A \Bimod$ 
has a left dual in the monoidal category 
$(A \Bimod, \otimes_A, A)$ if and only 
if it is finitely generated and projective 
as a left module, and in this case 
$U^* = \Hom_A(U, A)$.  Similarly the right 
dual exists if $U$ is finitely generated 
an projective on the right, and then 
$^* U = \Hom_{A^{\op}}(U, A)$. 
In particular, if $U$ is invertible, its 
inverse must be isomorphic to both 
$\Hom_A(U, A)$ and $\Hom_{A^{\op}}(U, A)$.

If $\tau, \sigma: A \to A$ are endomorphisms 
of an algebra $A$, and $M$ is an 
$A$-bimodule, then we write 
$\leftidx{^\tau} M^{\sigma}$ for the 
$A$-bimodule $M$ with actions 
$a \cdot m \cdot b = \tau(a)m\sigma(b)$. If 
either $\tau$ or $\sigma$ is the identity 
map it is omitted from the notation.

\begin{lemma}
\label{yylem2.6}
Let $A$ be a $\kk$-algebra with algebra 
endomorphism $\sigma: A \to A$.  Suppose 
that $A^{\sigma}$ is an invertible 
$A$-bimodule. Then $\sigma$ is an 
automorphism.
\end{lemma}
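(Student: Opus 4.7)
The plan is to translate invertibility of the bimodule $A^\sigma$ into a statement about an equivalence of left module categories, and then compute what that equivalence does to the endomorphism ring of the regular module. Specifically, if $A^\sigma$ is invertible in $(A \Bimod, \otimes_A, A)$ with some inverse $U$, then the functor $F := A^\sigma \otimes_A -: A \lMod \to A \lMod$ is an equivalence, with quasi-inverse $U \otimes_A -$. In particular, $F$ induces a bijection on $\End_A(A)$.

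The first key observation is that, as a left $A$-module, $A^\sigma$ is just the regular module $A$, since the twist by $\sigma$ only affects the right $A$-action. The right unitor isomorphism $r: A^\sigma \otimes_A A \to A^\sigma$ is given by $m \otimes a \mapsto m \cdot a = m\sigma(a)$, with inverse $m \mapsto m \otimes 1$. Composing with the identification of $A^\sigma$ with $A$ as left modules, we may view $F(A)$ as the regular left module $A$, and thus $F$ induces a self-bijection of $\End_A(A) \cong A^{\op}$, where the isomorphism $A^{\op} \cong \End_A(A)$ sends $a$ to right multiplication $\rho_a$.

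It remains to compute this self-bijection explicitly. For $a \in A$, the morphism $F(\rho_a) = \on{id} \otimes \rho_a: A^\sigma \otimes_A A \to A^\sigma \otimes_A A$ sends $m \otimes 1 \mapsto m \otimes a$, which under $r$ corresponds to $m \mapsto m\sigma(a)$, that is, $\rho_{\sigma(a)}$. Hence the induced self-bijection of $A^{\op}$ is just $a \mapsto \sigma(a)$. Since $F$ is an equivalence this map is bijective, and as $\sigma$ is already a $\kk$-algebra homomorphism, we conclude that $\sigma$ is an automorphism. The argument is quite short, and the only thing requiring care is tracking the right unitor through $F$; I do not expect any real obstacle.
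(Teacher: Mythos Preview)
Your proof is correct and takes a genuinely different route from the paper's. The paper computes the inverse of $A^\sigma$ explicitly as $\leftidx{^\sigma}A$ (via $\Hom_A(A^\sigma,A)$), obtains a bimodule isomorphism $\phi: A \to \leftidx{^\sigma}A^\sigma$, and then argues at the level of elements: setting $x=\phi(1)$ one has $\phi(z)=\sigma(z)x=x\sigma(z)$, from which injectivity of $\sigma$ follows from injectivity of $\phi$, and surjectivity follows after observing that $x$ is a unit. Your approach instead passes to the Morita equivalence $F=A^\sigma\otimes_A-$ and reads off $\sigma$ as the induced automorphism of $\End_A(A)\cong A^{\op}$ via the right unitor identification $F(A)\cong A$. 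Your argument is cleaner and avoids the element chase; the paper's argument is slightly more self-contained in that it does not need to invoke that equivalences are fully faithful. Either way the content is the same: invertibility forces $\sigma$ to act bijectively on $A$.
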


\begin{proof}
Write $U = A^{\sigma}$.  Since $U$ is 
invertible in the monoidal category of 
$A$-bimodules, the inverse of $U$ must 
be the left dual $U^*  = \Hom_A(U, A)$. 
Note that $U^* = \Hom(U,A) = 
\Hom_A(A^{\sigma}, A) \cong 
\leftidx{^{\sigma}} \!A$ as 
$A$-bimodules, via the map 
$f \mapsto f(1)$.  Because $U$ is 
invertible, there is also an isomorphism 
$U^* \otimes_A U \to A$. But 
$U^* \otimes_A U \cong \leftidx{^{\sigma}} 
\! A \otimes_A A^{\sigma} \cong 
\leftidx{^{\sigma}} \! A^{\sigma}$.  Fix 
an isomorphism $\phi: A \to 
\leftidx{^{\sigma}} \! A^{\sigma}$ of 
$A$-bimodules, and let $x = \phi(1)$.  
Then $\phi(z) = \phi(z \cdot 1) = z* x 
= \sigma(z) x$ and $\phi(z) = 
\phi(1 \cdot z) = x * z = x \sigma(z)$.  
Now if $z \in \ker(\sigma)$ then 
$\phi(z) = 0$, and since $\phi$ is 
injective, $z = 0$.  So $\sigma$ is 
injective.  Since $\phi$ is surjective 
there must be $y \in A$ such that 
$1 = \phi(y) = x \sigma(y) = \sigma(y) x$.  
So $x$ is a unit in $A$.  We also have 
$A = \phi(A) = \sigma(A) x$ since $\phi$ 
is an isomorphism of left $A$-modules. 
Let $a \in A$.  Then $ax = \sigma(b) x$ 
for some $b \in A$.  Since $x$ is a unit, 
$a = \sigma(b)$.  Thus $a \in \sigma(A)$ 
and $\sigma$ is surjective.
\end{proof}

Note that if $\sigma: A \to A$ is an 
automorphism, then the map 
$A \to A^{\sigma}$ given on the 
underlying sets by $\sigma$ is 
a right $A$-module isomorphism.  This 
observation also has a converse, under 
a quite weak hypothesis on the ring. We 
say that $A$ is \emph{Dedekind-finite} if 
every one-sided invertible element in $A$ 
is a unit \cite[Definition 2.2]{DGK}. It is 
well-known that every noetherian ring is 
Dedekind-finite. 

\begin{lemma}
\label{yylem2.7}
Let $A$ be a Dedekind-finite $\kk$-algebra 
with algebra endomorphism $\sigma: A \to A$. 
If $A^{\sigma} \cong A$ as right $A$-modules, 
then $\sigma$ is an automorphism.  
\end{lemma}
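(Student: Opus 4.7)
The plan is to transfer the isomorphism $\phi : A \to A^\sigma$ of right $A$-modules into a concrete formula involving $\sigma$, and then exploit Dedekind-finiteness to turn a one-sided inverse into a unit.

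Set $x = \phi(1)$. Since $\phi$ is a right $A$-module map, for every $b \in A$ we have
\[
\phi(b) = \phi(1 \cdot b) = \phi(1) \star b = x\,\sigma(b),
\]
where $\star$ denotes the right action on $A^\sigma$. Thus $\phi$ is precisely the map $b \mapsto x\,\sigma(b)$. Surjectivity of $\phi$, applied to the element $1 \in A$, produces some $y \in A$ with $x\,\sigma(y) = 1$. Hence $x$ is right invertible in $A$, and since $A$ is Dedekind-finite, $x$ is a two-sided unit with $\sigma(y) = x^{-1}$.

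Now $\sigma$ is injective: if $\sigma(b) = 0$, then $\phi(b) = x\,\sigma(b) = 0$, and $\phi$ being injective forces $b = 0$. For surjectivity, for any $a \in A$ the formula $\sigma(b) = x^{-1}\phi(b)$ gives $\sigma(A) = x^{-1}\phi(A) = x^{-1}A = A$, since $\phi$ is surjective and $x$ is a unit. Therefore $\sigma$ is a bijective algebra endomorphism of $A$, i.e., an automorphism.

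The only nontrivial step is the appeal to Dedekind-finiteness to upgrade the right inverse $\sigma(y)$ of $x$ to a genuine two-sided inverse. Without that hypothesis one could not even guarantee $x$ is cancellable on the left, let alone invertible, so the conclusion would fail in general; this is exactly where the assumption is used.
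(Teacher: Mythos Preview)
Your proof is correct. The paper actually states Lemma~\ref{yylem2.7} without proof, so there is nothing to compare against; your argument is the natural one and is exactly what the authors presumably had in mind, given that the preceding Lemma~\ref{yylem2.6} is proved by the same technique of setting $x=\phi(1)$ and exploiting that $\phi(b)=x\sigma(b)$.
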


Recall that an algebra is said to be 
\emph{orthogonally finite} if it does not 
contain an infinite set of nonzero orthogonal 
idempotents \cite[Definition 2.1]{DGK}. It 
is clear that a left or right noetherian 
ring is orthogonally finite. By a result 
of Jacobson \cite{Jac50}, orthogonally 
finite rings are Dedekind-finite. 

\begin{lemma}
\label{yylem2.8} 
Suppose that $A$ is an orthogonally finite 
algebra. Let $M$ be an invertible 
$A$-bimodule and $\sigma,\tau$ be algebra 
endomorphisms of $A$. If ${^\sigma A}
\cong M^{\tau}$ as $A$-bimodules, then 
both $\sigma$ and $\tau$ are isomorphisms.
\end{lemma}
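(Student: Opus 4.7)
The plan is to use the given bimodule isomorphism $\phi : {}^\sigma A \cong M^\tau$ to exhibit $M$ as a cyclic right $A$-module, combine this with invertibility of $M$ to obtain a ring isomorphism $A \cong eAe$ for some idempotent $e \in A$, and then use orthogonal finiteness to force $e = 1$. Once $M \cong A$ as a right $A$-module, the fact that both $\sigma$ and $\tau$ are automorphisms will drop out.

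First I would set $m_0 := \phi(1) \in M$. Right $A$-linearity of $\phi$ against the twisted action on $M^\tau$ gives $\phi(a) = m_0\tau(a)$, while left $A$-linearity against the twisted action on ${}^\sigma A$ gives $\phi(\sigma(a)) = am_0$; comparing these yields the key identity $am_0 = m_0\tau\sigma(a)$ for all $a \in A$. Surjectivity of $\phi$ gives $M = m_0\tau(A) \subseteq m_0 A \subseteq M$, so $M = m_0 A$ is cyclic as a right $A$-module. Because $M$ is an invertible bimodule, it is in particular a right $A$-progenerator, hence projective, and being cyclic then forces $M \cong eA$ as right $A$-modules for some idempotent $e \in A$. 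The left $A$-action on $M$ defines a ring homomorphism $\lambda : A \to \End_{A^{\op}}(M) \cong eAe$, which must be a ring isomorphism by the Morita characterization of invertible bimodules; thus $A \cong eAe$ as rings.

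The main obstacle is to show $e = 1$. Assuming for contradiction that $1-e \neq 0$, I would iterate $\lambda$ to construct a strictly descending chain of idempotents. Set $e_0 := 1$, $e_1 := e$, and inductively $e_{n+1} := \lambda(e_n) \in e_n A e_n$, using at each stage that $\lambda$ restricts to a ring isomorphism $e_{n-1} A e_{n-1} \xrightarrow{\sim} e_n A e_n$ (since $\lambda(e_{n-1} A e_{n-1}) = e_n \cdot eAe \cdot e_n = e_n A e_n$). By induction from the base case $\lambda(1-e) = e_1 - e_2 \neq 0$, every difference $e_n - e_{n+1} = \lambda(e_{n-1} - e_n)$ is nonzero. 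A short calculation using the nesting $e_i e_j = e_j = e_j e_i$ whenever $i \le j$ shows that the differences $\{e_i - e_{i+1}\}_{i \ge 0}$ are pairwise orthogonal. This produces an infinite family of nonzero pairwise orthogonal idempotents in $A$, contradicting orthogonal finiteness.

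Hence $e = 1$ and $M \cong A$ as right $A$-modules, with $m_0$ a free right generator (so $\ann^r_A(m_0) = 0$). Then the bijection $\phi(a) = m_0 \tau(a)$ immediately forces $\tau$ to be a bijection, hence an algebra automorphism. Under the identification $M \cong A$ as right modules, the ring isomorphism $\lambda : A \to A$ is precisely the map $a \mapsto \tau\sigma(a)$ coming from the identity $am_0 = m_0 \tau\sigma(a)$, so $\tau\sigma$ is an algebra automorphism. Since $\tau$ is already an automorphism, $\sigma = \tau^{-1} \circ (\tau\sigma)$ is also an automorphism.
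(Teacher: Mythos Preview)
Your proof is correct and follows the same core strategy as the paper: use $m_0 = \phi(1)$ to exhibit $M$ as cyclic on the right, combine projectivity with the orthogonal-finiteness iteration on idempotents to force $M \cong A$ as right $A$-modules, and then conclude that the twists are automorphisms. The idempotent chain you build is exactly the one in the paper.

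Your endgame is a little more direct. The paper deduces that $\tau$ is an automorphism by appealing to Lemma~\ref{yylem2.7} (via Dedekind-finiteness) and then handles $\sigma$ by a symmetry argument with the inverse bimodule of $M^{\tau}$. You instead read off bijectivity of $\tau$ immediately from $\phi(a)=m_0\tau(a)$ together with $\rann(m_0)=0$, and you get $\sigma$ from the identity $a m_0 = m_0\,\tau\sigma(a)$, which identifies the left-action isomorphism $\lambda:A\to A$ with $\tau\sigma$. This bypasses both Lemma~\ref{yylem2.7} and the inverse-bimodule step, at the cost of checking the relation $a m_0 = m_0\,\tau\sigma(a)$ up front; it is a modest streamlining of the same argument rather than a different route.
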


\begin{proof} Let $\phi: {^\sigma A}\to 
M^{\tau}$ be an isomorphism of $A$-bimodules 
and let $g=\phi(1)$. Then for 
$a\in {^\sigma A}$, 
$\phi(a)=\phi(1 a)=\phi(1) a=g \tau(a)$.
Thus $g$ is a generator of the right 
$A$-module $M$. So $M\cong A/N$ where 
$N = \rann(g)$. 

We claim that $N=0$. To see this note 
that $M$ is an invertible $A$-bimodule, 
so projective on both side; in particular,
$A/N$ is a projective right $A$-module. 
If $N\neq 0$, then $A\cong A/N\oplus N$ 
as a projective decomposition. So there 
is a nontrivial idempotent $e\in A$ such 
that $N=(1-e)A$ and $A/N\cong eA$. This 
means that $M_A$ is isomorphic to 
$eA$. It is well-known that 
$\End_{A^{op}}(eA)=eAe$. Since $M$ is an 
invertible $A$-bimodule, $f: A\cong 
\End_{A^{op}}(M)\cong \End_{A^{op}}(eA)=eAe$
is an isomorphism of $\kk$-algebras. Let 
$e_0=1$ be the identity element  of $A$, 
and define inductively $e_i=f(e_{i-1})$ 
for all $i\geq 1$, then 
$\{e_{i-1}-e_{i}\}_{i=1}^{\infty}$ is an 
finite set of orthogonal idempotents, 
yielding a contradiction. Therefore $N=0$ 
and consequently, $M\cong A$ as right
$A$-module. By Lemma \ref{yylem2.7} (and 
the fact that $A$ is Dedekind-finite), 
$\tau$ is an isomorphism.

Since $M$ is invertible and $\tau$ is an 
isomorphism, $M^{\tau}(\cong M\otimes_{A} 
A^{\tau})$ is invertible. Let $N$ be the 
inverse of $M^{\tau}$ as an $A$-bimodule.
Then $A\cong M^{\tau}\otimes_A N \cong
{^\sigma A}\otimes_A N \cong  {^\sigma N}$ as 
$A$-bimodules. By symmetry, $\sigma$ is an 
automorphism.
\end{proof}

\section{Homological Integrals in 
weak Hopf algebras}
\label{yysec3}

In this section, we will define the 
homological integral for an AS Gorenstein 
weak Hopf algebra and study some of its 
most basic properties; in particular we 
will show the integral is an invertible 
object in the appropriate monoidal 
category. We start by recalling some classical 
definitions that were mentioned in the introduction.

\begin{definition}
\label{yydef3.1}
Let $A$ be an algebra over $\kk$.  
\begin{enumerate}
\item[(1)] 
We say that $A$ satisfies the 
\emph{Van den Bergh condition} if 
\begin{enumerate}
\item[(1i)] 
$A$ has finite injective dimension $d$ 
as a left and right $A$-module; and 
\item[(1ii)] 
$\Ext^i_{A^e}(A, A^e) \cong 
\begin{cases} 0 & i \neq d \\ U & i 
= d \end{cases}$ for some invertible 
$A$-bimodule $U$. In this case, 
$U$ is called the \emph{Nakayama bimodule} 
of $A$. 
\end{enumerate}
\item[(2)] 
We say that $A$ is \emph{Artin--Schelter (AS)} Gorenstein if 
\begin{enumerate}
\item[(2i)] 
$A$ has finite injective dimension $d$ as 
a left and right $A$-module;  
\item[(2ii)] 
for every finite-dimensional left module 
$M$, $\Ext^i(M, A) = 0$ for $i \neq d$ 
and $\Ext^d(M, A)$ is a finite-dimensional 
right module; 
\item[(2iii)] 
the right sided analog of (2ii) also holds.
\end{enumerate}
\end{enumerate}  
\end{definition}

Suppose that $H$ is noetherian AS Gorenstein 
of injective dimension $d$. As a consequence 
of the definition, $\Ext^d_H(-, H)$ gives a 
duality between finite-dimensional left 
$H$-modules and finite-dimensional right 
$H$-modules, with inverse 
$\Ext^d_{H^{\op}}(-, H)$ \cite[Lemma 1.4]{RWZ1}.  
We will use this frequently below.

\begin{definition}
\label{yydef3.2}
Let $H$ be a weak Hopf algebra.  If $H$ is 
AS Gorenstein of injective dimension $d$, 
we define the \emph{left homological 
integral} of $H$ to be the right $H$-module 
$\int^{\ell}_H = \Ext^d_H(H_t, H)$. Similarly, 
the \emph{right homological integral} 
of $H$ is the left $H$-module 
$\int^r_H = \Ext^d_{H^{\op}}(H_s, H)$.  
More generally, for any such $H$ (not 
necessarily AS Gorenstein)  the 
\emph{left total integral} of $H$ 
is defined to be the right $H$-module 
$\widetilde{\int}^{\ell}_H =
\bigoplus_{s\geq 0} \Ext^s_H(H_t, H)$.  
Similarly, the \emph{right total integral} 
of $H$ is the left $H$-module 
$\widetilde{\int}^r_H = \bigoplus_{t\geq 0}
\Ext^t_{H^{\op}}(H_s, H)$.  
\end{definition}

It is clear that if $H$ is AS Gorenstein, then 
$\widetilde{\int}^{\ell}_H=\int^{\ell}_H$ 
and $\widetilde{\int}^r_H=\int^r_H$. If 
$H$ is not AS Gorenstein (for example, if 
$H$ is the Hopf algebra 
$\Bbbk\langle x,y\rangle$ with $x$ and $y$ 
primitive), both $\widetilde{\int}^{\ell}_H$ 
and $\widetilde{\int}^r_H$ can be infinite 
dimensional. 

If $H$ is a finite-dimensional weak Hopf 
algebra, then it is quasi-Frobenius and 
AS Gorenstein of dimension $0$, and 
$\Hom_H(H_t, H)$ can be identified with 
the space of left integrals, that is, 
$h \in H$ such that $gh = \epsilon_t(g)h$ 
for all $g \in H$. Similarly, 
$\Hom_{H^{\op}}(H_s, H)$ can be 
identified with the space of right 
integrals.  So we see that the definition 
of integrals for AS Gorenstein weak Hopf 
algebras generalizes the finite-dimensional 
case.

Our goal in the rest of this section is 
to prove that, when $H$ is an AS 
Gorenstein weak Hopf algebra, then the 
right homological integral is an 
invertible object in $H \lMod$. 
We need a few easy homological lemmas.

\begin{lemma}
\label{yylem3.3}
Let $R$ and $S$ be rings, $M \in 
R\lMod$, $N \in (R, S) \Bimod$, and 
$P \in (S, T) \Bimod$. Suppose that 
$P$ is flat as a left $S$-module.
Assume either that 
\begin{enumerate}
\item[(i)] \cite[Lemma 3.7(1)]{YZ3}
$M$ has a projective resolution by 
finitely generated projective 
$R$-modules, or 
\item[(ii)] 
$P$ is a finitely generated projective 
left $S$-module. 
\end{enumerate}
Then, for all $i \geq 0$, there is an 
isomorphism of right $T$-modules 
\[
\Ext^i_R(M, N \otimes_S P) \cong 
\Ext^i_R(M, N) \otimes_S P.
\]
\end{lemma}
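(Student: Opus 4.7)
The plan is to reduce both cases to the standard principle that $\Hom$ commutes with tensor product when one of the sides is finitely generated projective, then combine this with the flatness of $P$ over $S$ to move $(- \otimes_S P)$ past cohomology. First, I would pick a projective resolution $P_\bullet \to M$ of $M$ as a left $R$-module, finitely generated in case (i) and arbitrary in case (ii), and rewrite both sides of the desired isomorphism as the $i$-th cohomology of complexes:
\[
\Ext^i_R(M, N \otimes_S P) = H^i\bigl(\Hom_R(P_\bullet, N \otimes_S P)\bigr), \qquad \Ext^i_R(M, N) \otimes_S P = H^i\bigl(\Hom_R(P_\bullet, N)\bigr) \otimes_S P.
\]
Since $P$ is flat over $S$ by hypothesis, the functor $- \otimes_S P$ is exact and therefore commutes with cohomology, so it suffices to establish a natural isomorphism of complexes
\[
\Hom_R(P_\bullet, N) \otimes_S P \;\xrightarrow{\;\sim\;}\; \Hom_R(P_\bullet, N \otimes_S P),
\]
compatible with the right $T$-module structures (which arise solely from the right $T$-action on $P$).

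Reducing to the module level, I need to show that for each $P_j$ appearing in the resolution, the canonical map
\[
\eta_{P_j}\colon \Hom_R(P_j, N) \otimes_S P \longrightarrow \Hom_R(P_j, N \otimes_S P), \qquad f \otimes p \longmapsto (x \mapsto f(x) \otimes p),
\]
is a right $T$-module isomorphism. In case (i), $P_j$ is a finitely generated projective left $R$-module, so $\eta_{P_j}$ is an isomorphism by the usual argument: it is trivially an isomorphism when $P_j = R$ (both sides identify with $N \otimes_S P$), it is compatible with finite direct sums, and it is preserved by passage to direct summands, so it is an isomorphism for any finitely generated projective $P_j$. In case (ii), the roles are swapped: $P$ is a finitely generated projective left $S$-module, so we write $P$ as a direct summand of some $S^n$. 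For $P = S$ the map $\eta_{P_j}$ is the evident identity $\Hom_R(P_j, N) \cong \Hom_R(P_j, N)$, this extends to $S^n$ by additivity, and then $\eta_{P_j}$ for general $P$ is recovered as a direct summand, hence an isomorphism.

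Assembling, $\eta_{P_\bullet}$ is an isomorphism of complexes of right $T$-modules, and applying $H^i$ together with the exactness of $- \otimes_S P$ yields the desired natural isomorphism
\[
\Ext^i_R(M, N \otimes_S P) \cong \Ext^i_R(M, N) \otimes_S P
\]
of right $T$-modules. The only substantive point to verify carefully is the $T$-equivariance of the connecting isomorphism in case (ii), since the right $T$-action passes through the summand projection $S^n \twoheadrightarrow P$; but this is automatic from the functoriality of $\eta$ in $P$.
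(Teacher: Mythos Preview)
Your proof is correct and follows essentially the same approach as the paper: construct the natural map $\Hom_R(P_j,N)\otimes_S P \to \Hom_R(P_j,N\otimes_S P)$, verify it is an isomorphism using the finitely generated projective hypothesis (on $P_j$ in case (i), on $P$ in case (ii)), and then pass to Ext via a projective resolution of $M$ together with flatness of $P$. The paper in fact only writes out case~(ii) and cites \cite[Lemma~3.7(1)]{YZ3} for case~(i), so your treatment is slightly more complete but not different in substance.
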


\begin{proof}
(ii) There is a natural map $\phi: 
\Hom_R(M, N) \otimes_S P \to 
\Hom_R(M, N \otimes_S P)$ given by 
$\phi(f \otimes p)(m) = f(m) \otimes p$. 
This map is clearly an isomorphism when 
$P = S$, and therefore, when $P$ is a 
summand of a finitely generated free 
$S$-module. The result follows for 
$i \geq 0$ by calculating Ext using a 
projective resolution of $M$.
\end{proof}

\begin{lemma}
\label{yylem3.4}
Let $H$ be a weak Hopf algebra and let 
$W, V \in H \lMod$. Assume that $V$ is 
finite-dimensional and let $V^* \in H 
\lMod$ be its left dual. Then for every 
$i$, there is an isomorphism
\[
\Ext^i_H(W \otensor^{\ell} V, H) 
\xrightarrow{\cong} 
\Ext^i_H(W, H \otensor^{\ell} V^*)
\]
of right $H$-modules, where 
$H \otensor^{\ell} V^* = \mathbb{F}^R(V^*)$ 
as an $H$-bimodule.
\end{lemma}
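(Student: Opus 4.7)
The plan is to establish a Hom-level adjunction arising from the left dual $V^*$, and then lift it to Ext by resolving $W$ projectively.

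First I would check that the functor $-\otensor^\ell V \colon H\lMod \to H\lMod$ is exact and preserves projectives. Exactness follows because $V$ is finite-dimensional, hence $\kk$-free, so $-\otimes_\kk V$ is exact and $M\otensor^\ell V = \Delta(1)(M\otimes_\kk V)$ is its functorial direct summand. The same argument applied to $V^*$ shows $-\otensor^\ell V^*$ is exact. Because $V^*$ is a left dual of $V$ in the monoidal category $(H\lMod, \otensor^\ell, H_t)$, standard monoidal duality \cite[Section 2.10]{EGNO} provides a natural adjunction isomorphism
\[
\alpha_{W,Y}\colon \Hom_H(W\otensor^\ell V, Y) \xrightarrow{\;\cong\;} \Hom_H(W, Y\otensor^\ell V^*)
\]
for $W, Y \in H\lMod$, explicitly $\alpha_{W,Y}(f) = (f\otensor^\ell 1_{V^*}) \circ (1_W \otensor^\ell \coev_V)$. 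Thus $-\otensor^\ell V$ is left adjoint to the exact functor $-\otensor^\ell V^*$, so it preserves projectives.

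Next, choose a projective resolution $P^\bullet \to W$ in $H\lMod$. By the above, $P^\bullet \otensor^\ell V \to W\otensor^\ell V$ is again a projective resolution, so $\Ext^i_H(W\otensor^\ell V, H)$ is computed by $\Hom_H(P^\bullet\otensor^\ell V, H)$, while $\Ext^i_H(W, H\otensor^\ell V^*)$ is computed by $\Hom_H(P^\bullet, H\otensor^\ell V^*)$. Applying $\alpha_{-,H}$ termwise gives an isomorphism of cochain complexes, and passing to cohomology produces the claimed isomorphism at the level of abelian groups.

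The main obstacle is verifying right $H$-linearity. On the source, the right $H$-action comes from right multiplication on $H$; on the target, it comes from the $(H,H)$-bimodule structure on $\mathbb{F}^R(V^*) = H\otensor^\ell V^*$ of Lemma~\ref{yylem1.3}(3), where right multiplication acts only on the first tensor factor. Inspecting the formula for $\alpha_{W,H}$, the right $H$-action passes through $f$ on the source side and through the leftmost $H$ in $H\otensor^\ell V^*$ on the target side, since $\coev_V$ is a morphism in $H\lMod$ and involves neither. This is precisely the bimodule-extended adjunction described in Remark~\ref{yyrem1.5}, applied with $B = H$ acting on $H$ by right multiplication. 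With this compatibility in place, the chain-level isomorphism, and hence the Ext isomorphism, is right $H$-linear.
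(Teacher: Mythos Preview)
Your proposal is correct and follows essentially the same route as the paper: establish the Hom-level adjunction coming from the left dual $V^*$, verify it is right $H$-linear via naturality (equivalently, via the formula and Remark~\ref{yyrem1.5} as you do), and then pass to Ext using a projective resolution of $W$ together with the fact that $-\otensor^\ell V$ is exact and preserves projectives. The only cosmetic difference is that the paper cites \cite[Lemma 6.4(3)]{RWZ1} for preservation of projectives, whereas you deduce it more cleanly from $-\otensor^\ell V$ being left adjoint to the exact functor $-\otensor^\ell V^*$.
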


\begin{proof}
We have the adjoint isomorphism that 
holds for any left dual in a monoidal 
category, 
\[
\phi: \Hom_H(W \otensor^{\ell} V, H) \to 
\Hom_H(W, H \otensor^{\ell} V^*).
\]
Because the right multiplication by 
$h \in H$ on $H \otensor^{\ell} V^*$ 
arises from applying the functor 
$- \otensor^{\ell} V^*$ to right 
multiplication by $h$ on $H$, it 
follows formally from the naturality 
of the adjoint isomorphism that $\phi$ 
is an isomorphism of right $H$-modules.
The result for Ext follows by taking a 
projective resolution of $W$, and using 
that $- \otensor^{\ell} V$ is an exact 
functor which preserves projective 
modules (see~\cite[Lemma 6.4(3)]{RWZ1}).
\end{proof}

\begin{lemma}
\label{yylem3.5}
Let $H$ be a weak Hopf algebra.  Suppose 
that $M, W \in H \lMod$, and that either 
$W$ is finite-dimensional or that $M$ has 
a projective resolution by finitely 
generated projective modules.
\begin{enumerate}
\item[(1)] 
Let $H \otensor^{\ell} W = \mathbb{F}^R(W)$ 
as $(H, H)$-bimodules. Then, for any $i \geq 0$,
\[
\Ext^i_H(M, H \otensor^{\ell} W) 
\cong \Ext^i_H(M, H) \otensor^r W^S
\]
as right $H$-modules.  
\item[(2)] 
For any $i$, we have $\Ext^i_H(-, H) \cong 
\Ext^i_H(H_t, H) \otensor^r ((-)^*)^S$, 
as functors from the category $H \lmod$ 
of finite-dimensional left $H$-modules to 
the category of right $H$-modules.
\end{enumerate}
\end{lemma}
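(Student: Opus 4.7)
The plan is to derive part (1) as a projection formula via Lemma~\ref{yylem3.3}, after identifying the bimodule structure on $H \otensor^{\ell} W$ through Lemma~\ref{yylem1.8}(3), and then to deduce part (2) from part (1) using the adjunction of Lemma~\ref{yylem3.4} together with the fact that $H_t$ is the monoidal unit of $H \lMod$.

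For part (1), I first observe that with $N = H$ viewed as an $(H,H)$-bimodule and $P = \mathbb{F}^R(W)$ regarded as an $(H,H)$-bimodule (with right $H$-action on the first tensorand as in Lemma~\ref{yylem1.3}(3)), one has $N \otimes_H P \cong \mathbb{F}^R(W) = H \otensor^{\ell} W$ canonically as $H$-bimodules. The left $H$-flatness required of $P$ by Lemma~\ref{yylem3.3} holds because Lemma~\ref{yylem1.8}(3) gives the $H$-bimodule isomorphism $\mathbb{F}^R(W) \cong \mathbb{G}^R(W^S)$, which is left $H$-flat by Lemma~\ref{yylem1.4}(1). Moreover, the two hypotheses in our statement match the two alternatives of Lemma~\ref{yylem3.3}: hypothesis~(i) applies directly if $M$ admits a resolution by finitely generated projective $H$-modules, and if instead $W$ is finite-dimensional then $W$ is a finitely generated projective left $H_s$-module (since $H_s$ is semisimple), so the left $H$-module isomorphism $H \otensor^{\ell} W \cong H \otimes_{H_s} W$ of Lemma~\ref{yylem1.8}(1) realizes $\mathbb{F}^R(W)$ as a direct summand of some $H^n$, giving hypothesis~(ii). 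Applying Lemma~\ref{yylem3.3} therefore yields
\[
\Ext^i_H(M, H \otensor^{\ell} W) \cong \Ext^i_H(M, H) \otimes_H \mathbb{F}^R(W)
\]
as right $H$-modules. I would then rewrite $\mathbb{F}^R(W) \cong \mathbb{G}^R(W^S)$ as $H$-bimodules via Lemma~\ref{yylem1.8}(3) and apply the defining Eilenberg--Watts natural isomorphism $(-) \otimes_H \mathbb{G}^R(W^S) \cong (-) \otensor^r W^S$ of right $H$-modules (stated just before Lemma~\ref{yylem1.4}) to obtain the claimed identification with $\Ext^i_H(M, H) \otensor^r W^S$.

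For part (2), take any $M \in H \lmod$ and apply Lemma~\ref{yylem3.4} with $W = H_t$ and $V = M$; since $H_t$ is the monoidal unit one has $H_t \otensor^{\ell} M \cong M$, giving $\Ext^i_H(M, H) \cong \Ext^i_H(H_t, H \otensor^{\ell} M^*)$ of right $H$-modules. Then part~(1), applied with first argument $H_t$ and with the finite-dimensional module $M^*$ in the role of $W$, gives $\Ext^i_H(H_t, H \otensor^{\ell} M^*) \cong \Ext^i_H(H_t, H) \otensor^r (M^*)^S$. Composing yields the desired isomorphism, and the naturality in $M$ of each ingredient (the categorical adjunction used in Lemma~\ref{yylem3.4} and the Hom--tensor interchange of Lemma~\ref{yylem3.3}) promotes this to an isomorphism of functors $H \lmod \to \rMod H$.

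The main subtlety lies in tracking the right $H$-module structure throughout the argument. Lemma~\ref{yylem3.3} only delivers a right $H$-module isomorphism when $P$ carries a compatible right $H$-action, and on $\mathbb{F}^R(W)$ this action is only on the first tensorand; however, the target structure on $\Ext^i_H(M, H) \otensor^r W^S$ arises from the \emph{diagonal} right $H$-action coming from the monoidal product $\otensor^r$. The $H$-bimodule identification $\mathbb{F}^R(W) \cong \mathbb{G}^R(W^S)$ of Lemma~\ref{yylem1.8}(3) is precisely the nontrivial bridge between these two descriptions and is what makes the final chain of natural right $H$-module isomorphisms coherent.
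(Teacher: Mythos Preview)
Your proof is correct and follows essentially the same route as the paper's: both arguments identify $H \otensor^{\ell} W$ with $\mathbb{G}^R(W^S)$ via Lemma~\ref{yylem1.8}(3), invoke Lemma~\ref{yylem3.3} to pull this bimodule out of the Ext, and then use the Eilenberg--Watts identification $(-)\otimes_H \mathbb{G}^R(W^S)\cong (-)\otensor^r W^S$; part~(2) is deduced identically from Lemma~\ref{yylem3.4} and part~(1). Your justification that $\mathbb{F}^R(W)$ is finitely generated projective on the left when $\dim_\kk W<\infty$ (via Lemma~\ref{yylem1.8}(1) and semisimplicity of $H_s$) is in fact slightly cleaner than the paper's, which only notes it is a quotient of a finitely generated free module and leaves the projectivity implicit.
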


\begin{proof}
(1) By Lemma~\ref{yylem1.8}(3), 
$H \otensor^{\ell} W = \mathbb{F}^R(W) 
\cong \mathbb{G}^R(W^S) = H \otensor^r W^S$ 
as $(H , H)$-bimodules. Since by 
definition by have $- \otensor^r W^S 
\cong - \otimes_H \mathbb{G}^R(W^S)$ as 
functors, we claim that 
\begin{gather*}
\Ext^i_H(M, H \otensor^{\ell} W) \cong 
\Ext^i_H(M, H \otensor^r W^S) \cong 
\Ext^i_H(M, H \otimes_H \mathbb{G}^R(W^S)) 
\cong \Ext^i_H(M, H) 
\otimes_H \mathbb{G}^R(W^S) \\
\cong \Ext^i_H(M, H) \otensor^r W^S,
\end{gather*}
using Lemma~\ref{yylem3.3} for the 
third isomorphism.  This is immediate 
from that lemma if $M$ has a 
projective resolution by finitely 
generated projective modules. In the 
other case, where $W$ is 
finite-dimensional, we need to show 
that the $H$-bimodule 
$\mathbb{G}^R(W^S)$ is finitely 
generated projective as a left 
$H$-module.  But as a left $H$-module, 
$H \otensor^r W^S = H \otimes_{H_s} W^S$ 
is a quotient of $H \otimes_k W^S$, 
which is finitely generated free on 
the left.

(2) Let $V$ be a finite-dimensional 
left $H$-module, and let $V^*$ be its 
left dual in the monoidal category 
$(H \lMod, \otensor^{\ell}, H_t)$. We 
have the right $H$-module isomorphism
\[
\Ext^i_H(V, H) \cong 
\Ext^i_H(H_t \otensor^{\ell} V, H) 
\cong \Ext^i_H(H_t, H \otensor^{\ell} V^*)
\]
from Lemma~\ref{yylem3.4}. Now 
$\Ext^i_H(H_t, H \otensor^{\ell} V^*) 
\cong \Ext^i_H(H_t, H) \otensor^r (V^*)^S$ 
as right $H$-modules, by part (1).  It is 
easy to check that the resulting isomorphism 
$\Ext^i_H(V, H) \cong \Ext^i_H(H_t, H) 
\otensor^r (V^*)^S$ is functorial in $V$.
\end{proof}

In some cases we may only know that a 
weak Hopf algebra $H$ is a direct sum 
of AS Gorenstein algebras, without 
understanding whether the coalgebra 
structure respects these summands.  In 
the next result, we see that this is 
still sufficient to prove that the total 
integral is well-behaved.

\begin{proposition}
\label{yypro3.6}
Let $H$ be a weak Hopf algebra which 
is a direct sum of AS Gorenstein 
algebras {\rm{(}}where we do not 
assume that this decomposition is a 
direct sum of AS Gorenstein weak Hopf 
algebras{\rm{)}}.
\begin{enumerate}
\item[(1)]
Let $V \in H \catMod$ be invertible in 
the monoidal category 
$(H \lMod, \otensor^{\ell}, H_t)$. Then  $\bigoplus_{s\geq 0} \Ext^s_H(V, H)$ is 
invertible in the monoidal category
$(\rMod H, \otensor^r, H_s)$.  In 
particular, the left total integral $\widetilde{\int}_H^{\ell}$ is 
invertible in $(\rMod H, \otensor^r, H_s)$.
\item[(2)]
Dually, $\widetilde{\int}_H^r$ is invertible 
in $(H \lMod, \otensor^{\ell}, H_t)$. 
\end{enumerate}
\end{proposition}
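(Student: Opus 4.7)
The plan is to prove (1); statement (2) will follow by the symmetric argument with the roles of left and right interchanged. Our strategy is to first establish the \emph{in particular} assertion that $\widetilde{\int}_H^\ell$ is invertible, and then deduce the general case from it.

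For the reduction, suppose $V$ is invertible in $(H\lMod,\otensor^\ell,H_t)$. By Lemma~\ref{yylem2.3}(1) the underlying $H_t$-bimodule of $V$ is invertible, and since $H_t$ is finite-dimensional, $V$ must be finite-dimensional as well. Lemma~\ref{yylem3.5}(2) then applies, and summing over $i$ yields a right $H$-module isomorphism
\[
\bigoplus_{s\ge 0}\Ext^s_H(V,H)\;\cong\;\widetilde{\int}_H^\ell \otensor^r (V^*)^S.
\]
Because $V$ is invertible, $V^*$ coincides with the monoidal inverse of $V$ and is itself invertible in $H\lMod$. Lemma~\ref{yylem1.1} combined with Lemma~\ref{yylem2.1}(1) then shows $(V^*)^S$ is invertible in $\rMod H$ (invertibility is unchanged if $\otensor^r$ is replaced by its opposite). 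Since tensor products of invertibles in any monoidal category are invertible, the general case of (1) will follow once $\widetilde{\int}_H^\ell$ is known to be invertible.

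For the main step, decompose $H=\bigoplus_{i=1}^n H_i$ as an algebra, with each $H_i$ AS Gorenstein of injective dimension $d_i$ and identity $e_i$. Then $\widetilde{\int}_H^\ell \cong \bigoplus_i \Ext^{d_i}_{H_i}(e_iH_t,H_i)$ is finite-dimensional, and the total-Ext functor $F(-):=\bigoplus_s \Ext^s_H(-,H)$ restricts to a contravariant equivalence from fin-dim left $H$-modules to fin-dim right $H$-modules. By the right-module analogue of Lemma~\ref{yylem2.3}(1), invertibility of $\widetilde{\int}_H^\ell$ in $\rMod H$ reduces to invertibility of its underlying $H_s$-bimodule. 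The approach is to exhibit an explicit inverse; a natural candidate is $(\widetilde{\int}_H^r)^S$, the image of the right total integral under the monoidal functor $(-)^S$ of Lemma~\ref{yylem1.1}, which is invertible in $\rMod H$ provided (2) is proved in parallel. The main obstacle is verifying the identification $\widetilde{\int}_H^\ell \otensor^r (\widetilde{\int}_H^r)^S \cong H_s$ (and its mirror), since the algebra decomposition $H=\bigoplus H_i$ need not respect the comultiplication and hence the monoidal product $\otensor^r$; the verification must therefore combine the AS Gorenstein Ext-duality on each summand with the identifications $S\colon H_t\to H_s$ supplied by the antipode.
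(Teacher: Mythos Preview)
Your reduction of the general invertible $V$ to the unit case $V=H_t$ via the summed form of Lemma~\ref{yylem3.5}(2) is correct and clean; the paper in fact runs the logic the other way, proving the statement for arbitrary invertible $V$ directly and reading off $V=H_t$ as a special case.

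The genuine gap is in your ``main step''. You correctly note that the total-Ext functor $F=\bigoplus_s\Ext^s_H(-,H)$ is a contravariant equivalence between finite-dimensional left and right $H$-modules, but then you abandon this and try instead to name the inverse of $\widetilde{\int}_H^\ell$ explicitly as $(\widetilde{\int}_H^r)^S$. That identification is true a posteriori (compare Proposition~\ref{yypro3.7}), but you have no way to verify $\widetilde{\int}_H^\ell\otensor^r(\widetilde{\int}_H^r)^S\cong H_s$ at this stage; as you yourself point out, the algebra decomposition need not respect $\Delta$, and no argument is offered. Appealing to part~(2) ``proved in parallel'' does not help, since (2) is by duality the same problem.

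The missing idea is already in your hands. Since $F$ is a contravariant \emph{equivalence}, it is essentially surjective: there exists a finite-dimensional left $H$-module $X$ with $F(X)\cong H_s$. Now apply your own reduction formula (the summed Lemma~\ref{yylem3.5}(2)) to this $X$:
\[
H_s\;\cong\;F(X)\;\cong\;\widetilde{\textstyle\int}_H^\ell\;\otensor^r\;(X^*)^S.
\]
This exhibits a right $H$-module $Y=(X^*)^S$ with $\widetilde{\int}_H^\ell\otensor^r Y\cong H_s$, and the right-module version of Lemma~\ref{yylem2.3}(3) then gives invertibility of $\widetilde{\int}_H^\ell$ immediately. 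No explicit identification of the inverse, and no circular appeal to~(2), is needed. This is exactly the argument the paper uses (phrased there for general invertible $V$ rather than for $H_t$).
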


\begin{proof}
(1) Suppose that $H$ is a noetherian weak 
Hopf algebra and there is a direct sum 
decomposition $H=\bigoplus_{i=1}^{h} A_i$ 
as a direct sum of AS Gorenstein algebras. 
Let $e_i$ be the central idempotent 
corresponding to $A_i$ so $A_i=He_i$. Since 
$H$ is noetherian, so is each $A_i$.  Let 
$d_i$ denote the injective dimension of 
$A_i$ and define the functor 
${\mathbb E}_i = \bigoplus_{s\geq 0} 
\Ext^s_{A_i}(-, A_i)$. Then when viewed as 
a functor on the category of 
finite-dimensional left $A_i$-modules we 
have ${\mathbb E}_i 
\cong \Ext^{d_i}_{A_i}(-,A_i)$.  As 
mentioned earlier, $\Ext^{d_i}_{A_i}(-,A_i)$ 
(hence ${\mathbb E}_i$) is a contravariant 
equivalence from the category of 
finite-dimensional left $A_i$-modules to 
the category of finite-dimensional right 
$A_i$-modules. 

Let ${\mathbb E}$ be the functor 
$\bigoplus_{s\geq 0} \Ext^s_H(-,H)$.
Since $H$ is a direct sum of the $A_i$, 
therefore ${\mathbb E}\cong 
\bigoplus_{i=1}^{h}{\mathbb E}_i$ and it 
is a contravariant equivalence from the 
category of finite-dimensional left 
$H$-modules to the category of 
finite-dimensional right $H$-modules. In 
particular, the functor is essentially 
surjective and so there is a 
finite-dimensional left $H$-module $X$ 
such that ${\mathbb E}(X) \cong H_s$ as 
right $H$-modules. Now suppose that 
$V \in H\lMod$ is invertible with inverse 
$W$.  Then 
\begin{align*}
H_s &\cong {\mathbb E}(X) \cong 
\bigoplus_{s\geq 0} \Ext^s_H(X,H)
\cong \bigoplus_{s\geq 0} 
\Ext^s_H(V \otensor^{\ell} W 
\otensor^{\ell} X, H) \cong 
\bigoplus_{s\geq 0}
\Ext^s_H(V, H \otensor^{\ell} 
(W \otensor^{\ell} X)^*) \\
&\cong \bigoplus_{s\geq 0}
\Ext^s_H(V, H) \otensor^r 
((W \otensor^{\ell} X)^*)^S
\cong {\mathbb E}(V) 
\otensor^r (W \otensor^{\ell} X)^*)^S
\end{align*}
as right $H$-modules, where we have 
used Lemmas~\ref{yylem3.4} and 
\ref{yylem3.5}(1). This shows that 
${\mathbb E}(V) \otensor^r Y \cong H_s$ 
for some finite-dimensional right 
$H$-module $Y$. By a right-sided 
version of Lemma~\ref{yylem2.3}, 
${\mathbb E}(V)$ must be invertible in 
the category of right $H$-modules. The 
final statement follows by taking 
$V = H_t$, as the unit object is 
invertible in any monoidal category. 
The proof of (2) is similar.  
\end{proof}

Specializing to the AS Gorenstein case, 
we have the following.

\begin{proposition}
\label{yypro3.7}
Let $H$ be a noetherian AS Gorenstein 
weak Hopf algebra of injective dimension 
$d$.  Then the left homological integral 
$\int_H^{\ell} = \Ext^d_H(H_t, H)$ is 
invertible in $(\rMod H, \otensor^r, H_s)$. 
Similarly, the right homological integral 
$\int_H^r = \Ext^d_{H^{\op}}(H_s, H)$ is 
invertible in $(H \lMod, \otensor^{\ell}, 
H_t)$.  Moreover, $^S(\int_H^{\ell}) \cong 
\int_H^r$ as left $H$-modules and 
$\int_H^{\ell} \cong (\int_H^r)^S$ as 
right $H$-modules.
\end{proposition}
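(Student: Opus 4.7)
The plan is to deduce the invertibility of $\int_H^{\ell}$ and $\int_H^r$ from Proposition~\ref{yypro3.6}, and then to derive the two comparison isomorphisms by combining Lemma~\ref{yylem3.5}(2) with the Ext duality that follows from the AS Gorenstein condition. For the invertibility, view $H$ as the trivial ``direct sum'' consisting of the single AS Gorenstein algebra $H$ itself. Proposition~\ref{yypro3.6}(1) applied with $V = H_t$ shows that $\widetilde{\int}_H^{\ell} = \bigoplus_{s\geq 0} \Ext_H^s(H_t, H)$ is invertible in $(\rMod H, \otensor^r, H_s)$; since the AS Gorenstein hypothesis forces $\Ext^i_H(H_t, H) = 0$ for $i \neq d$, this total integral equals $\int_H^{\ell}$. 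The analogous application of Proposition~\ref{yypro3.6}(2) gives the invertibility of $\int_H^r$ in $(H\lMod, \otensor^{\ell}, H_t)$.

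To prove $\int_H^{\ell} \cong (\int_H^r)^S$ as right $H$-modules, apply Lemma~\ref{yylem3.5}(2) to the finite-dimensional left $H$-module $V = \int_H^r$. Writing $W = (\int_H^r)^*$ for its left dual in $(H\lMod, \otensor^{\ell}, H_t)$, which is invertible as the inverse of $\int_H^r$, one obtains
\[
\Ext^d_H(\int_H^r, H) \cong \int_H^{\ell} \otensor^r W^S.
\]
On the other hand, the Ext duality between $H\lmod$ and $\rmod H$ that accompanies the AS Gorenstein condition gives
\[
\Ext^d_H(\int_H^r, H) = \Ext^d_H\bigl(\Ext^d_{H^{\op}}(H_s, H), H\bigr) \cong H_s.
\]
Therefore $\int_H^{\ell} \otensor^r W^S \cong H_s$ in $\rMod H$. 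Applying Lemma~\ref{yylem1.1} to the inverse pair $W, \int_H^r$ in $(H\lMod, \otensor^{\ell}, H_t)$ produces the inverse pair $W^S, (\int_H^r)^S$ in $(\rMod H, \otensor^r, H_s)$. Thus $\int_H^{\ell}$ and $(\int_H^r)^S$ are both inverses of $W^S$ in the monoidal category of right $H$-modules, and by uniqueness of inverses one gets $\int_H^{\ell} \cong (\int_H^r)^S$.

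Finally, apply the monoidal functor ${}^S(-) \colon \rMod H \to H\lMod$ to this isomorphism to obtain ${}^S\int_H^{\ell} \cong {}^S\bigl((\int_H^r)^S\bigr)$ in $H\lMod$. A direct unpacking of the actions shows that ${}^S\bigl((\int_H^r)^S\bigr)$ is the underlying space $\int_H^r$ with its left $H$-action twisted by $S^2$. The left-module analog of Lemma~\ref{yylem2.4}, proved by the same biduality argument (an invertible object in $H\lMod$ has a left and a right dual which are isomorphic, and the identification $V^{**} \cong V$ exhibits the $S^2$-twist of the action on $V$), then shows that this twist is isomorphic to $\int_H^r$ itself, yielding ${}^S\int_H^{\ell} \cong \int_H^r$ as left $H$-modules. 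The main obstacle is bookkeeping rather than conceptual: correctly tracking how $(-)^S$ and ${}^S(-)$ interact with the two monoidal products $\otensor^{\ell}, \otensor^r$ and their units $H_t, H_s$, and verifying the $S^2$-invariance of invertible left $H$-modules without assuming bijectivity of $S$.
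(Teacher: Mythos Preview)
Your proposal is correct and follows essentially the same route as the paper's proof. The only cosmetic difference is that the paper explicitly invokes the identity $(V^*)^S \cong (V^S)^*$ to recognize $((\int_H^r)^*)^S$ as the left dual of $(\int_H^r)^S$ in $\rMod H$, whereas you achieve the same conclusion by applying the monoidal functor $(-)^S$ of Lemma~\ref{yylem1.1} to the inverse pair $(\int_H^r)^*,\ \int_H^r$; these are equivalent observations. Your final step is likewise identical to the paper's, which also applies $^S(-)$ and then uses (implicitly the left-module analog of) Lemma~\ref{yylem2.4} to remove the $S^2$-twist.
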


\begin{proof}
The first two statements are immediate 
from Proposition~\ref{yypro3.6}. Applying 
Lemma~\ref{yylem3.5}(2) to the left 
$H$-module $\int^r_H$ we get 
$\Ext^d_H(\int^r_H, H) \cong 
\int^{\ell}_H \otensor^r ((\int^{r}_H)^*)^S$.  
For any finite-dimensional left $H$-module 
$V$, it is easy to check that 
$(V^*)^S \cong (V^S)^*$ as right 
$H$-modules, where where $V^*$ is the 
left dual of $V$ in $(H \catMod, 
\otensor^{\ell})$ while $(V^S)^*$ is the 
left dual of the right module $V^S$ in 
$(\rcatMod H, \otensor^r)$. Now we have 
\[
\textstyle H_s \cong \Ext^d_H
(\Ext^d_{H^{op}}(H_s, H), H) 
= \Ext^d_H(\int^r_H, H) \cong 
\int^{\ell}_H \otensor^r 
((\int^{r}_H)^*)^S \cong 
\int^{\ell}_H \otensor^r ((\int^{r}_H)^S)^*
\]
as right $H$-modules.  Thus 
$((\int^{r}_H)^S)^*$ is the inverse of 
the invertible right module $\int^{\ell}_H$ 
in $(\rcatMod H, \otensor^r, H_s)$.  On 
the other hand every invertible module 
has its left dual as its inverse, so the 
inverse of $((\int^{r}_H)^S)^*$ is also 
$(\int^{r}_H)^S$.  Thus 
$\int^{\ell}_H \cong (\int^{r}_H)^S$ as 
right modules.  

For the other isomorphism, apply $^S(-)$ 
to both sides, giving 
$^S(\int^{\ell}_H) \cong 
{}^S((\int^{r}_H)^S) = {}^{S^2}(\int^{r}_H) 
\cong \int^{r}_H$, using Lemma~\ref{yylem2.4}.
\end{proof}

\begin{corollary}
\label{yylem3.8}
Let $H$ be a noetherian AS Gorenstein 
weak Hopf algebra. The following 
are equivalent:
\begin{enumerate}
\item[(1)] 
$\int^{\ell}_H \cong H_s$ as right 
$H$-modules.
\item[(2)]
$\int^r_H \cong H_t$ as left $H$-modules.
\end{enumerate}
\end{corollary}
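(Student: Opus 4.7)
The plan is to deduce this equivalence directly from Proposition~\ref{yypro3.7}, which supplies both the right $H$-module isomorphism $\int^\ell_H \cong (\int^r_H)^S$ and the left $H$-module isomorphism ${}^S(\int^\ell_H) \cong \int^r_H$. Once these are in hand, the only additional ingredient needed is a description of how the twist functors $(-)^S: H\lMod \to \rMod H$ and ${}^S(-): \rMod H \to H\lMod$ act on the two unit objects $H_t$ and $H_s$.

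First I would recall from Lemma~\ref{yylem1.1} the isomorphism $(H_t)^S \cong H_s$ of right $H$-modules. The symmetric statement I need is an isomorphism ${}^S H_s \cong H_t$ of left $H$-modules, which is not explicitly stated earlier but is straightforward. Since $S$ restricts to a bijection $H_t \to H_s$, the natural candidate is $\phi : {}^S H_s \to H_t$ defined by $\phi(x) = S^{-1}(x)$. Writing $x = S(y)$ with $y \in H_t$ and unpacking the two actions, one finds that $\phi(h \cdot x) = S^{-1}(\epsilon_s(xS(h))) = S^{-1}(\epsilon_s(S(hy)))$ and $h \cdot \phi(x) = \epsilon_t(hy)$ agree via the identity $S \circ \epsilon_t = \epsilon_s \circ S$ recalled in Section~\ref{yysec1}.

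With these two isomorphisms in place, both directions are immediate. If $\int^r_H \cong H_t$ as left $H$-modules, then applying $(-)^S$ gives $(\int^r_H)^S \cong (H_t)^S \cong H_s$, and Proposition~\ref{yypro3.7} yields $\int^\ell_H \cong (\int^r_H)^S \cong H_s$, so (1) holds. Conversely, if $\int^\ell_H \cong H_s$ as right $H$-modules, then applying ${}^S(-)$ gives ${}^S(\int^\ell_H) \cong {}^S H_s \cong H_t$, and Proposition~\ref{yypro3.7} yields $\int^r_H \cong {}^S(\int^\ell_H) \cong H_t$, so (2) holds. There is no real obstacle here; the only computational step is the verification ${}^S H_s \cong H_t$, and the corollary is essentially a direct packaging of Proposition~\ref{yypro3.7} together with the fact that the twist functors interchange the two unit objects of the monoidal categories $H\lMod$ and $\rMod H$.
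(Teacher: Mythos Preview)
Your proof is correct and follows essentially the same approach as the paper: the paper's proof simply says the result follows from Proposition~\ref{yypro3.7} together with $(H_t)^S \cong H_s$ and $^S(H_s) \cong H_t$, which is exactly what you do. Your argument is slightly more detailed in that you explicitly verify the isomorphism ${}^S H_s \cong H_t$, whereas the paper merely cites it.
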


\begin{proof}
This follows from the previous result, 
because $(H_t)^S \cong H_s$ and 
$^S(H_s) \cong H_t$ 
(see Lemma~\ref{yylem1.4}).
\end{proof}

\begin{definition}
\label{yydef3.9}
We say that $H$ is \emph{unimodular} if 
either of the equivalent conditions in 
the previous lemma holds.
\end{definition}

For an AS Gorenstein Hopf algebra $H$, 
unimodularity is equivalent to the statement 
that $\int^r_H \cong \kk \cong \int^{\ell}_H$ 
as $(H, H)$-bimodules. In our case, when 
$H$ is a weak Hopf algebra, $\int^r_H$ and 
$\int^{\ell}_H$ are not naturally 
$(H, H)$-bimodules, so this statement 
does not seem to have an analog.

\section{The integral and the 
Nakayama bimodule}
\label{yysec4}

In this section, we generalize the work 
of Brown and third-named author to 
calculate the Nakayama bimodule of an 
AS Gorenstein weak Hopf algebra. Along 
the way, we show that the antipode of 
an AS Gorenstein weak Hopf algebra is 
invertible under very general hypotheses, 
generalizing work of Skryabin, Le Meur, 
and Lu--Oh--Wang--Yu \cite{LeM, LOWY, Sk}.

\begin{definition}
\label{yydef4.1}
Let $H$ be a weak Hopf algebra. Let $\Delta': H \to H^e = H \otimes_{\kk} 
H^{\op}$ be defined by 
$\Delta'(h) = h_1 \otimes S(h_2)$. Then 
$\Delta'$ is multiplicative, so it is a 
(non-unital) ring homorphism. In particular 
$\Delta'(1) =  1_1 \otimes S(1_2)$ is 
idempotent in $H^e$.  Define a functor 
$L: H^e \lMod  \to H \lMod$ where 
$L(M) = \Delta'(1) M$ with left $H$ action 
given by pulling back the left $H^e$-action 
by $\Delta'$. In other words, thinking of 
$H^e \lMod$ as $(H,H) \Bimod$, the action 
of $H$ on $L(M)$ is given by 
$h \cdot m = h_1 m S(h_2)$.
\end{definition}

Note that since $\Delta'(H) \subseteq 
\Delta'(1) H^e \Delta'(1)$, $H^e \Delta'(1)$ 
is naturally an $(H^e, H)$-bimodule, where 
$H$ acts on the right via $\Delta'$.  
Similarly, $\Delta'(1) H^e$ is naturally 
an $(H, H^e)$-bimodule.  These bimodules 
can be better understood in terms of the 
functors in Section~\ref{yysec1}.

\begin{lemma}
\label{yylem4.2}
Let $H$ be a weak Hopf algebra. Then
\begin{enumerate}
\item[(1)] 
$H^e \Delta'(1) \cong \mathbb{F}^R(H) 
\cong \mathbb{G}^R( H^S)$ as modules in 
$(H^e, H) \Bimod = (H, H \otimes_{\kk} H) 
\Bimod$, where the additional right 
$H$-module structures on $\mathbb{F}^R(H)$ 
and $\mathbb{G}^R( H^S)$ come from 
Remark~{\rm{\ref{yyrem1.5}}}.  
\item[(2)] 
$\Delta'(1) H^e \cong \mathbb{F}^R
( {}^S H) \cong \mathbb{G}^R(H^{S^2})$ 
as modules in $(H, H^e) \Bimod = 
(H \otimes_{\kk} H, H) \Bimod$, where 
the additional left $H$-module structures 
on $\mathbb{F}^R( {}^S H)$ and 
$\mathbb{G}^R(H^{S^2})$ come from 
Remark~{\rm{\ref{yyrem1.5}}}. 
\end{enumerate} 
\end{lemma}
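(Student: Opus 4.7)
The plan is to verify both parts by exhibiting both sides as the same vector subspace of $H \otimes_{\kk} H$ and then matching the three commuting $H$-actions on each side. The second isomorphism in each part is then immediate from Lemma~\ref{yylem1.8}(3), extended to bimodules via Remark~\ref{yyrem1.9} (or its left-sided analog for part (2)).

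For part (1), direct computation in $H^e = H \otimes H^{\op}$ shows that $H^e\Delta'(1) = \{a 1_1 \otimes S(1_2) b : a, b \in H\}$ as a vector subspace of $H \otimes_{\kk} H$. Since $H^S$ carries the right $H$-action $n \cdot h = S(h) n$, the definition of $\otensor^r$ also gives
\[
\mathbb{G}^R(H^S) = (H \otimes_{\kk} H^S)\Delta(1) = \{a 1_1 \otimes S(1_2) n : a, n \in H\},
\]
the same subspace. Under this identification of underlying sets, I would verify that the three commuting $H$-actions match: the left $H$-action is left multiplication on the first tensorand on both sides; the ``extra'' right $H$-action (coming from the $H^{\op}$-factor of $H^e$ on one side, and from the right-multiplication bimodule structure on $H$ via Remark~\ref{yyrem1.5} on the other) is right multiplication on the second tensorand; and the remaining right $H$-action (via $\Delta'$ for $H^e \Delta'(1)$, and via the diagonal $\otensor^r$-action for $\mathbb{G}^R(H^S)$) is $(x \otimes y) \cdot h = x h_1 \otimes S(h_2) y$ in both cases. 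Combined with $\mathbb{F}^R(H) \cong \mathbb{G}^R(H^S)$ from Lemma~\ref{yylem1.8}(3), this proves (1).

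Part (2) proceeds analogously. Since the left action on ${}^S H$ is $h \cdot m = m S(h)$, the definition of $\otensor^{\ell}$ gives $\mathbb{F}^R({}^S H) = \Delta(1)(H \otimes_{\kk} {}^S H) = \{1_1 a \otimes m S(1_2) : a, m \in H\}$, which coincides as a subspace of $H \otimes_{\kk} H$ with $\Delta'(1) H^e = \{1_1 a \otimes b S(1_2) : a, b \in H\}$. One then checks that the three commuting $H$-actions (now two left and one right) match: the right $H$-action is right multiplication on the first tensorand; the ``main'' left $H$-action is $h \cdot (x \otimes y) = h_1 x \otimes y S(h_2)$ (arising from $\Delta'$ on one side and from the diagonal $\otensor^{\ell}$-action on the other); and the ``extra'' left $H$-action is left multiplication on the second tensorand. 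Finally, $\mathbb{F}^R({}^S H) \cong \mathbb{G}^R(H^{S^2})$ follows from Lemma~\ref{yylem1.8}(3) together with the identification $({}^S H)^S \cong H^{S^2}$ of right $H$-modules.

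The main obstacle is bookkeeping rather than mathematical content: keeping track of which of the three commuting $H$-actions on each object corresponds to which under the identifications $(H^e, H) \Bimod = (H, H \otimes_{\kk} H) \Bimod$ and $(H, H^e) \Bimod = (H \otimes_{\kk} H, H) \Bimod$. In particular, one must be careful with the multiplication in $H^{\op}$ and with Sweedler notation for $\Delta'(h) = h_1 \otimes S(h_2)$ to correctly match the diagonal $\otensor^r$-action on $\mathbb{G}^R(H^S)$ with the action via $\Delta'$ on $H^e \Delta'(1)$ (and analogously for part (2)).
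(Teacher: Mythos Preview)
Your proposal is correct and follows essentially the same approach as the paper: identify $H^e\Delta'(1)$ with $\mathbb{G}^R(H^S)$ (respectively $\Delta'(1)H^e$ with $\mathbb{F}^R({}^S H)$) as the same subspace of $H\otimes_\kk H$, check the three $H$-actions match explicitly, and invoke Lemma~\ref{yylem1.8}(3)/Remark~\ref{yyrem1.9} for the remaining isomorphism. The only cosmetic difference is that the paper records the actions in $(H^e,H)$-bimodule notation (e.g.\ $(a\otimes c)\cdot(g\otimes h)\cdot b = agb_1\otimes S(b_2)hc$) rather than as three separate $H$-actions, but the content is identical.
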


\begin{proof}
(1) We have 
\[
H^e \Delta'(1) = \{ g 1_1 \otimes 
S(1_2) h | g, h \in H \} 
= (H \otimes_{\kk}  H^S)\Delta(1) 
= H \otensor^r H^S = \mathbb{G}^R( H^S) 
\]
as $\kk$-spaces.  As $(H^e, H)$-bimodules 
the actions on $H^e \Delta'(1)$ are 
$(a \otimes c) \cdot (g \otimes h) \cdot b 
= agb_1 \otimes S(b_2)hc$.  Since $H^S$ 
is an $(H^{op}, H)$-bimodule with 
actions $x * h * y = S(y) h x$, 
$\mathbb{G}^R( H^S)$ has an additional 
left $H^{op}$-structure as in 
Remark~\ref{yyrem1.5}, and as such the 
actions on $H \otensor^r H^S$ are 
$(a \otimes c) \cdot (g \otimes h) \cdot b 
= agb_1 \otimes S(b_2) h c$.   This shows 
that $H^e \Delta'(1) \cong 
\mathbb{G}^R( H^S)$ as $H$-bimodules, 
where the isomorphism preserves the 
additional left $H^{op}$-action via $c$.

The isomorphism $\mathbb{G}^R( H^S) \cong 
\mathbb{F}^R(H)$ holds as 
$(H \otimes_{\kk} H^{op}, H)$-bimodules, 
equivalently as 
$(H, H \otimes_{\kk} H)$-bimodules, by 
Remark~\ref{yyrem1.9}.  For reference, 
the $(H, H \otimes_{\kk} H)$-bimodule 
structure on $\mathbb{F}^R(H) 
= H \otensor^{\ell} H$ is given explicitly 
by $a \cdot (g \otimes h) \cdot (b \otimes c) 
= a_1 g b \otimes a_2 h c$.

(2) This is similar to part (1).  We have 
\[
\Delta'(1) H^e = \{ 1_1 g \otimes 
h S(1_2) | g, h \in H \} 
= \Delta(1) (H \otimes_{\kk}  {}^S H)
= H \otensor^{\ell} ({}^S H) 
= \mathbb{F}^R( {}^S H). 
\]
The $(H, H \otimes_{\kk} H^{op})$-bimodule 
structure on $\Delta'(1) H^e$ is given 
by $a \cdot (g \otimes h) \cdot (b \otimes c) = 
a_1gb \otimes ch S(a_2)$.  On the other 
hand, ${}^S H$ is an $(H, H^{op})$-bimodule 
via $x \cdot h \cdot y = y h S(x)$ and 
so $\mathbb{F}^R({}^S H) 
= H \otensor^{\ell} ({}^S H)$ is an 
$(H, H \otimes H^{op})$-bimodule with actions 
$a \cdot (g \otimes h) \cdot (b \otimes c) 
= a_1 g b\otimes c h S(a_2)$.  This shows that 
$\Delta'(1) H^e \cong \mathbb{F}^R( {}^S H)$ 
as $(H, H^e)$-bimodules.  The further 
isomorphism $\mathbb{F}^R( {}^S H) \cong 
\mathbb{G}^R(H^{S^2})$ holds as 
$(H, H \otimes_{\kk} H^{op})$-bimodules, 
equivalently $(H \otimes_{\kk} H, H)$-bimodules, 
by Remark~\ref{yyrem1.9}. The 
$(H \otimes_{\kk} H, H)$-bimodule structure 
on $\mathbb{G}^R(H^{S^2}) 
= H \otensor^{r} H^{S^2}$ is given 
explicitly by $(a \otimes c) \cdot 
(g \otimes h) \cdot b 
=  agb_1  \otimes c hS^2(b_2)$.
\end{proof}

The functor $L$ turns out to be adjoint 
to a functor described earlier.  

\begin{lemma}
\label{yylem4.3}
Let $H$ be a weak Hopf algebra and retain the notation above.
The functor $L: H^e \lMod \to H \lMod$ is 
right adjoint to the functor 
$\mathbb{F}^L(-): H \lMod \to H^e \lMod$.
\end{lemma}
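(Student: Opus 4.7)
The plan is to prove this adjunction conceptually by recognizing $L$ as a (corrected) restriction-of-scalars functor along the non-unital ring homomorphism $\Delta' \colon H \to H^e$, and then identifying the classical left adjoint of such a restriction with $\mathbb{F}^L$ via Lemma~\ref{yylem4.2} and the defining property of $\mathbb{F}^R$.

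First I would observe that for any idempotent $e$ in a ring $R$, there is a natural identification $eN \cong \Hom_R(Re, N)$. Applying this with $R = H^e$ and $e = \Delta'(1)$ gives $L(M) = \Delta'(1) M \cong \Hom_{H^e}(H^e \Delta'(1), M)$. It is straightforward to verify that this is an isomorphism of left $H$-modules, where the $H$-module structure on the right side is induced by the right $H$-action on $H^e \Delta'(1)$ coming from $\Delta' \colon H \to \Delta'(1) H^e \Delta'(1)$; since $\Delta'(h) = h_1 \otimes S(h_2)$, this recovers exactly the action $h \cdot m = h_1 m S(h_2)$ from Definition~\ref{yydef4.1}. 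Then by the standard tensor--hom adjunction,
\[
\Hom_H(W, L(M)) \cong \Hom_H\bigl(W, \Hom_{H^e}(H^e \Delta'(1), M)\bigr) \cong \Hom_{H^e}\bigl(H^e \Delta'(1) \otimes_H W, M\bigr),
\]
where the tensor product uses the right $H$-structure on $H^e \Delta'(1)$ just described and the left $H$-structure on $W$.

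Hence it remains to identify $H^e \Delta'(1) \otimes_H W$ with $\mathbb{F}^L(W)$ as $H^e$-modules, naturally in $W$. By Lemma~\ref{yylem4.2}(1), $H^e \Delta'(1) \cong \mathbb{F}^R(H)$ as $(H^e, H)$-bimodules. The defining isomorphism of the Eilenberg--Watts functor $\mathbb{F}^R$ in Lemma~\ref{yylem1.3} produces a natural isomorphism of functors $\mathbb{F}^R(X) \otimes_H - \cong - \otensor^{\ell} X$ from $H\lMod$ to $H\lMod$, which applied at $X = H$ yields $\mathbb{F}^R(H) \otimes_H W \cong W \otensor^{\ell} H = \mathbb{F}^L(W)$ as left $H$-modules, natural in $W$. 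The bimodule extensions recorded in Remark~\ref{yyrem1.5} promote this to an isomorphism of $H^e$-modules.

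The main technical obstacle is the careful bookkeeping of the several intertwined bimodule structures. Specifically, one must verify that the right $H$-action on $\mathbb{F}^R(H)$ supplied by the bimodule extension of Remark~\ref{yyrem1.5} (namely $(g \otimes h) \cdot b = gb \otimes h$ in the $(H, H \otimes_{\kk} H^{\op})$ formulation) agrees, under the isomorphism of Lemma~\ref{yylem4.2}(1), with the right $H$-action on $H^e \Delta'(1)$ induced by $\Delta'$ (namely $(g \otimes h) \cdot b = g b_1 \otimes S(b_2) h$), so that the resulting tensor product $- \otimes_H W$ reproduces the diagonal $\Delta$-action on $W \otensor^{\ell} H$ prescribed by the $\otensor^{\ell}$-structure. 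Once these identifications are in place, naturality in both $W$ and $M$ is automatic from the naturality of each step in the chain.
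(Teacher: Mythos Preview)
Your proposal is correct and follows essentially the same route as the paper: recognize $L(M)\cong \Hom_{H^e}(H^e\Delta'(1),M)$, deduce that the left adjoint is $H^e\Delta'(1)\otimes_H -$, and then use Lemma~\ref{yylem4.2}(1) together with the defining property of $\mathbb{F}^R$ to identify $H^e\Delta'(1)\otimes_H W\cong \mathbb{F}^R(H)\otimes_H W\cong W\otensor^{\ell}H=\mathbb{F}^L(W)$. One small correction to your final paragraph: the right $H$-action $(g\otimes h)\cdot b=gb\otimes h$ on $\mathbb{F}^R(H)=H\otensor^{\ell}H$ is the \emph{built-in} right action from Lemma~\ref{yylem1.3}(3) (on the first tensorand), not the extra one supplied by Remark~\ref{yyrem1.5}; it is precisely this built-in action that makes $\mathbb{F}^R(H)\otimes_H W\cong W\otensor^{\ell}H$ hold by definition, and Lemma~\ref{yylem4.2}(1) already records that it matches the $\Delta'$-action under the $(H^e,H)$-bimodule isomorphism, so no further verification is needed.
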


\begin{proof}
Recall that for any idempotent $e$ in a 
ring $R$ and $N \in R \lMod$ we have 
$\Hom_R(Re, N) \cong eN$ as left 
$eRe$-modules. In particular, for 
$M \in H^e \lMod$, $L(M) = \Delta'(1) M 
\cong \Hom_{H^e}(H^e\Delta'(1), M)$.
Since $L(-) = \Hom_{H^e}( H^e\Delta'(1), -)$ 
for the $(H^e, H)$-bimodule $H^e\Delta'(1)$, 
the left adjoint of $L$ is the functor 
$F = H^e \Delta'(1) \otimes_H (-)$.  To 
complete the proof we will show that 
$F \cong \mathbb{F}^L(-)$.

By Lemma~\ref{yylem4.2}(1), $H^e\Delta'(1) 
\cong \mathbb{F}^R(H)$ as $(H^e, H)$-bimodules.  
Thus for any $M \in H \lMod$, 
$F(M) = H^e \Delta'(1) \otimes_H M \cong 
\mathbb{F}^R(H) \otimes_H M 
\cong M \otensor^{\ell} H$, where the 
last isomorphism comes from the definition 
of $\mathbb{F}^R$.  Moreover, 
$M \otensor^{\ell} H \cong 
\mathbb{F}^L(M)$ by Lemma~\ref{yylem1.3}(2).  
This determines an isomorphism 
$\phi_M: F(M) \to \mathbb{F}^L(M)$ of 
$(H, H)$-bimodules which is clearly 
natural in $M$, as required.
\end{proof}

\begin{corollary}
\label{yycor4.4}
Let $H$ be a weak Hopf algebra and let $C$ be a $\kk$-algebra. Let 
$M \in (H^e, C) \Bimod$.  Let $H$ have 
its usual left $H^e$-structure. Then for all $i$,
there is a right $C$-module isomorphism 
$\Ext^i_{H^e}(H, M) \cong \Ext^i_H(H_t, L(M))$.
\end{corollary}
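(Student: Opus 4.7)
The plan is to derive Corollary~\ref{yycor4.4} formally from Lemma~\ref{yylem4.3} (the adjunction $\mathbb{F}^L \dashv L$) together with the canonical $H^e$-module isomorphism $\mathbb{F}^L(H_t) \cong H$ recorded in Lemma~\ref{yylem1.3}(4). These two inputs already give, for any $M \in H^e\lMod$, a natural isomorphism
\[
\Hom_{H^e}(H, M) \;\cong\; \Hom_{H^e}(\mathbb{F}^L(H_t), M) \;\cong\; \Hom_H(H_t, L(M)),
\]
which settles the $i=0$ case.

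To pass to higher $\Ext^i$, I would fix an injective resolution $M \to I^{\bullet}$ in $H^e\lMod$ and apply the functor $L$. Two ingredients make this effective. First, $L$ is exact: it is multiplication by the idempotent $\Delta'(1) \in H^e$, equivalently $L \cong \Hom_{H^e}(H^e\Delta'(1), -)$ with $H^e\Delta'(1)$ a projective $H^e$-module. Second, $L$ preserves injectives, being the right adjoint of the exact functor $\mathbb{F}^L = -\otensor^{\ell} H$ (the monoidal product $\otensor^{\ell}$ is exact in each variable, as noted in Section~\ref{yysec1}). Hence $L(M) \to L(I^{\bullet})$ is an injective resolution of $L(M)$ in $H\lMod$. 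Applying the Hom-level adjunction termwise yields an isomorphism of cochain complexes $\Hom_{H^e}(H, I^{\bullet}) \cong \Hom_H(H_t, L(I^{\bullet}))$, and taking cohomology produces the claimed isomorphism $\Ext^i_{H^e}(H, M) \cong \Ext^i_H(H_t, L(M))$ for all $i$.

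For the right $C$-module structure I would invoke naturality in $M$. Each $c \in C$ determines a right-multiplication endomorphism $r_c: M \to M$ in $H^e\lMod$, and these endomorphisms induce the right $C$-actions on both $\Ext^i_{H^e}(H, M)$ and on $L(M)$ (via the functoriality of $L$) and hence on $\Ext^i_H(H_t, L(M))$. Since the adjunction isomorphism of Lemma~\ref{yylem4.3} is natural in its $H^e$-module argument, the squares induced by every $r_c$ commute, so the isomorphism is $C$-equivariant on passage to cohomology.

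The step requiring the most care is the bookkeeping of the right $C$-module structure at the end: the derived-functor argument itself is purely formal once Lemma~\ref{yylem4.3} and the identification $\mathbb{F}^L(H_t) \cong H$ are in hand. The genuine content is the observation that the $C$-actions on both sides arise entirely from the ring homomorphism $C \to \End_{H^e}(M)^{\op}$, so naturality of the adjunction forces $C$-equivariance; no further computation with the explicit formula $L(N) = \Delta'(1)N$ is needed.
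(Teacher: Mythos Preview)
Your proof is correct and uses the same essential ingredients as the paper: the adjunction $\mathbb{F}^L \dashv L$ from Lemma~\ref{yylem4.3}, the identification $\mathbb{F}^L(H_t)\cong H$ from Lemma~\ref{yylem1.3}(4), and naturality in $M$ for the $C$-structure. The only difference is the choice of resolution used to pass to higher $\Ext$: the paper takes a projective resolution $P_\bullet$ of $H_t$ in $H\lMod$ and pushes it forward via the exact, projective-preserving functor $\mathbb{F}^L$ to obtain a projective resolution of $H$ in $H^e\lMod$, whereas you take an injective resolution $I^\bullet$ of $M$ in $H^e\lMod$ and pull it back via the exact, injective-preserving functor $L$. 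These are the two standard dual ways to derive an adjunction isomorphism, and both work without any additional hypotheses here; neither approach buys anything the other does not.
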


\begin{proof}
First of all, by Lemma \ref{yylem4.3}, 
we have the adjoint isomorphism 
$\Hom_{H^e}(\mathbb{F}^L(H_t), M) 
\cong \Hom_H(H_t, L(M))$. 
As we saw in the proof of that lemma, 
this can be interpreted as an instance 
of tensor-Hom adjointness, so it is 
clear that  $L(M) \in (H,C) \Bimod$ and 
that the adjoint isomorphism preserves 
this right $C$-module structure. By 
Lemma~\ref{yylem1.3}(4), 
$\mathbb{F}^L(H_t) \cong H$ as 
$H^e$-modules. The case $i = 0$ is proved.  

Now choose a projective resolution 
$P_{\bullet}$ of $H_t$ in $H \lMod$.  
The functor $L$ is clearly exact by 
its definition.   Because $\mathbb{F}^L$ 
is the left adjoint of an exact functor, 
$\mathbb{F}^L$ preserves projectives.  
Since $\mathbb{F}^L(-) \cong 
(-) \otensor^{\ell} H$, it is clear that 
$\mathbb{F}^L$ is also exact.  Thus 
$\mathbb{F}^L(P_{\bullet})$ is a 
projective resolution of $H$ in 
$H^e \lMod$. Using these projective 
resolutions to compute $\Ext$, we get 
an isomorphism $\Ext^i_{H^e}(H, M) 
\cong \Ext^i_H(H_t, L(M))$, and the 
right $C$-action is clearly preserved.
\end{proof}

The following is the main technical result 
that will allow us to relate the 
Van den Bergh condition and the AS 
Gorenstein condition.  

\begin{proposition}
\label{yypro4.5}
Let $H$ be a weak Hopf algebra.  Suppose 
that $H_t$ has a projective resolution 
$P_{\bullet}$ in $H \catMod$ such that 
every projective $P_i$ is finitely 
generated as a left $H$-module.
\begin{enumerate}
\item[(1)] 
For all $i \geq 0$ there are isomorphisms 
of right $H^e$-modules 
\[
\Ext^i_{H^e}(H, H^e) \cong 
\Ext^i_H(H_t, L(H^e)) \cong 
\Ext^i_H(H_t, H \otensor^r H^{S^2}) 
\cong \Ext^i_H(H_t, H) \otensor^r H^{S^2};
\]
here, identifying $\rMod H^e$ with 
$(H,H) \Bimod$ the right $H$-action is 
the normal one in the monoidal category 
$(\rMod H,\otensor^r, \mathbbm{1})$ while 
the left $H$-action is via the left side 
of $H^{S^2}$.
\item[(2)] 
Considering the right $H^e$-module 
$\Ext^i_{H^e}(H, H^e)$ as an 
$(H, H)$-bimodule, the functor 
$$- \otimes_H \Ext^i_{H^e}(H, H^e): 
\rMod H \to \rMod H$$ 
is naturally isomorphic to the 
functor $\Ext^i_H(H_t, H) 
\otensor^r (-)^{S^2}$.
\end{enumerate}
\end{proposition}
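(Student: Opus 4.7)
My plan is to prove part (1) by chaining three isomorphisms in sequence, and then deduce part (2) by a direct tensor manipulation.

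For the first isomorphism in part (1), I would apply Corollary~\ref{yycor4.4} to $M = H^e$, regarded as a left $H^e$-module by multiplication and carrying its remaining right $H^e$-structure (so $C = H^e$ in the corollary); this immediately yields $\Ext^i_{H^e}(H, H^e) \cong \Ext^i_H(H_t, L(H^e))$ as right $H^e$-modules. For the second isomorphism, I would invoke Lemma~\ref{yylem4.2}(2), which identifies $L(H^e) = \Delta'(1) H^e$ with $\mathbb{G}^R(H^{S^2}) = H \otensor^r H^{S^2}$ as $(H, H^e)$-bimodules; applying $\Ext^i_H(H_t, -)$ transports this to the desired isomorphism. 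For the third isomorphism, I would rewrite $H \otensor^r H^{S^2} = H \otimes_H \mathbb{G}^R(H^{S^2})$ and apply Lemma~\ref{yylem3.3}(i) with $R = H$, $N = H$, $S = H$, and $P = \mathbb{G}^R(H^{S^2})$. Hypothesis~(i) is available because $H_t$ admits a resolution by finitely generated projectives by assumption, and left-flatness of $P$ is supplied by Lemma~\ref{yylem1.4}(1). This produces $\Ext^i_H(H_t, H) \otimes_H \mathbb{G}^R(H^{S^2}) \cong \Ext^i_H(H_t, H) \otensor^r H^{S^2}$. Throughout, I would appeal to Remarks~\ref{yyrem1.5} and~\ref{yyrem1.9} to ensure that every step preserves the additional right $H$-module structure (coming from the monoidal product) and the additional left $H$-module structure (coming from the left factor of $H^{S^2}$).

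For part (2), given the $(H,H)$-bimodule identification from part (1), for any $V \in \rMod H$ I would manipulate
\[
V \otimes_H (\Ext^i_H(H_t, H) \otensor^r H^{S^2})
= V \otimes_H \bigl( \Ext^i_H(H_t, H) \otimes_{H_s} H^{S^2} \bigr).
\]
The left $H$-action used for $V \otimes_H (-)$ is via the left side of $H^{S^2}$, which commutes with the $H_s$-action on $\Ext^i_H(H_t, H)$; hence I can commute the tensor products to get $\Ext^i_H(H_t, H) \otimes_{H_s} (V \otimes_H H^{S^2})$. A direct check shows $V \otimes_H H^{S^2} \cong V^{S^2}$ as right $H$-modules (via $v \otimes x \mapsto vx$), and this isomorphism is clearly natural in $V$, producing the identification of functors.

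The main obstacle is bookkeeping: the right $H^e$-module structure on $\Ext^i_{H^e}(H, H^e)$ decomposes into two separate actions on the target expression, and these get threaded through Corollary~\ref{yycor4.4} (which preserves a right $C$-action), through Lemma~\ref{yylem4.2}(2) (which gives an $(H, H^e)$-bimodule isomorphism), and through Lemma~\ref{yylem3.3} (which only tracks one sided action). The key conceptual input is Remark~\ref{yyrem1.9}: the identifications $\mathbb{F}^R \cong \mathbb{G}^R((-)^S)$ and the underlying Hopf-module machinery let the extra $H^{op}$-structure slide through as the left $H^{S^2}$-structure in the final expression, rather than as an independent action.
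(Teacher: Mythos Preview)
Your proposal is correct and follows essentially the same route as the paper's proof: Corollary~\ref{yycor4.4} for the first isomorphism, Lemma~\ref{yylem4.2}(2) for the second, and Lemma~\ref{yylem3.3} for the third, with part~(2) deduced by the same tensor manipulation exploiting that the left $H$-action lives on the $H^{S^2}$ factor. The only cosmetic difference is that the paper cites Lemma~\ref{yylem3.5}(1) for the third step rather than invoking Lemma~\ref{yylem3.3} directly, but since Lemma~\ref{yylem3.5}(1) is itself proved exactly as you describe (via $\mathbb{G}^R$ and Lemma~\ref{yylem3.3}), this is not a substantive distinction.
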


\begin{proof}
(1) The first isomorphism in the 
display is the adjoint isomorphism 
of Corollary~\ref{yycor4.4}, which 
as stated there maintains the 
additional right $H^e$-action because 
$H^e$ is a $(H^e, H^e)$-bimodule. The 
second isomorphism comes from the 
description of the $(H, H^e)$-bimodule 
structure of $L(H^e) = \Delta'(1) H^e$ 
as $\mathbb{G}^R(H^{S^2}) 
= H \otensor^r H^{S^2}$ using 
Lemma~\ref{yylem4.2}(2).  
Lemma~\ref{yylem3.5}(1) gives the 
third isomorphism as right $H$-modules, 
and it is straightforward to see that 
the right $H^{op}$-action, or equivalently 
the left $H$-action, via the left side of 
$H^{S^2}$ is preserved.

(2) Given $M \in \rMod H$, since the left 
$H$-action on $\Ext^i_H(H_t, H) \otensor^r 
H^{S^2}$ is via the left side of $H^{S^2}$, 
it is easy to see that 
\[
M \otimes_H (\Ext^i_H(H_t, H) 
\otensor^r H^{S^2}) 
\cong \Ext^i_H(H_t, H) \otensor^r 
(M \otimes_H H^{S^2}) 
\cong \Ext^i_H(H_t, H) 
\otensor^r M^{S^2}
\]
as right $H$-modules, and clearly this 
isomorphism is functorial in $M$. Now 
apply $M \otimes_H -$ to the isomorphism 
of part (1).
\end{proof}

Now we are ready to prove our first 
result about the bijectivity of the 
antipode.  

\begin{theorem}
\label{yythm4.6} 
Let $H$ be a noetherian weak Hopf algebra 
which is a finite sum of AS Gorenstein 
algebras.  Then the antipode $S$ is a 
bijection.
\end{theorem}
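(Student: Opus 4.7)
The plan is to reduce the bijectivity of $S$ to the fact that $S^2\colon H\to H$ is an algebra automorphism, and then to deduce the latter by showing that the $H$-bimodule $H^{S^2}$ is invertible and invoking Lemma~\ref{yylem2.6}.

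First I would apply Proposition~\ref{yypro4.5} to $H$. Since $H$ is noetherian and $H_t$ is finite-dimensional, $H_t$ admits a projective resolution by finitely generated projective left $H$-modules, so the proposition yields an $H$-bimodule isomorphism
\[
U := \bigoplus_{n \geq 0} \Ext^n_{H^e}(H, H^e) \;\cong\; \widetilde{\textstyle\int}^{\ell}_H \otensor^r H^{S^2}.
\]
Next, since $H$ is a finite sum of AS Gorenstein algebras, Proposition~\ref{yypro3.6} tells us that $\widetilde{\int}^{\ell}_H$ is invertible in $(\rMod H, \otensor^r, H_s)$. Then by Lemma~\ref{yylem2.5}(2), the bimodule $M := \mathbb{G}^L(\widetilde{\int}^{\ell}_H) = \widetilde{\int}^{\ell}_H \otensor^r H$ is an invertible $H$-bimodule. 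A direct check shows that for any $H$-bimodule $N$ one has $N^{S^2} \cong N \otimes_H H^{S^2}$, and applying this to $N = M$ together with the description of the right $H$-action in Proposition~\ref{yypro4.5}(1) gives $\widetilde{\int}^{\ell}_H \otensor^r H^{S^2} \cong M^{S^2} \cong M \otimes_H H^{S^2}$. Combining with the display above, $U \cong M \otimes_H H^{S^2}$, and tensoring on the left by the bimodule inverse $M^{-1}$ yields $H^{S^2} \cong M^{-1} \otimes_H U$ as $H$-bimodules.

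The main step is then to show that $U$ is itself an invertible $H$-bimodule. Writing $H = \bigoplus_i A_i$ as a finite direct sum of noetherian AS Gorenstein algebras, the central idempotents make cross-terms vanish, so
\[
U = \bigoplus_i U_i, \qquad U_i = \bigoplus_{n \geq 0} \Ext^n_{A_i^e}(A_i, A_i^e),
\]
with $U_i$ an $A_i$-bimodule. It therefore suffices to show that each $U_i$ is invertible over $A_i$. For a noetherian AS Gorenstein algebra $A_i$ of injective dimension $d_i$, the AS Gorenstein duality $\Ext^{d_i}_{A_i}(-, A_i)$ between finite-dimensional left and right modules lets one show by a standard argument that $U_i$ is concentrated in degree $d_i$ and is an invertible $A_i$-bimodule (the classical ``Nakayama bimodule''). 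This is the step I expect to be the main obstacle, since AS Gorenstein does not a priori imply the Van den Bergh condition; one must exploit the combination of noetherian plus AS Gorenstein to build an explicit inverse for $U_i$, or invoke external results on dualizing bimodules for noetherian AS Gorenstein algebras.

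Once $U$ is shown to be invertible, the isomorphism $H^{S^2} \cong M^{-1} \otimes_H U$ exhibits $H^{S^2}$ as a tensor product of two invertible $H$-bimodules, hence as an invertible $H$-bimodule. Since $H$ is noetherian it is in particular Dedekind-finite, so Lemma~\ref{yylem2.6} forces $S^2\colon H \to H$ to be an algebra automorphism. This gives both injectivity of $S$ (since $S^2$ is injective) and surjectivity (since $S(S(H)) = S^2(H) = H$), completing the proof that $S$ is bijective.
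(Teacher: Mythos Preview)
Your proposal has a genuine gap at the step you yourself flag as the ``main obstacle.'' You need each $U_i = \bigoplus_n \Ext^n_{A_i^e}(A_i, A_i^e)$ to be an invertible $A_i$-bimodule, which is exactly the Van den Bergh condition for the noetherian AS Gorenstein algebra $A_i$. This is not known for general noetherian AS Gorenstein algebras in the sense of Definition~\ref{yydef3.1}, and the only result in the paper establishing it (Theorem~\ref{yythm4.7}(2)) is stated for weak Hopf algebras and \emph{uses} Theorem~\ref{yythm4.6} in its proof. So invoking it here would be circular, and there is no ``standard argument'' from the AS Gorenstein duality alone that produces an inverse bimodule for $U_i$.

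The paper avoids this problem by not attempting to show $U$ is invertible at all. Instead it computes $U$ a second time from the right-module side, obtaining $U \cong {}^{S^2}H \otensor^{\ell} \widetilde{\int}^r_H$, and then compares the two descriptions:
\[
\widetilde{\textstyle\int}^{\ell}_H \otensor^r H^{S^2} \;\cong\; {}^{S^2}H \otensor^{\ell} \widetilde{\textstyle\int}^r_H.
\]
Tensoring off the invertible object $\widetilde{\int}^{\ell}_H$ on the left and using Lemma~\ref{yylem2.4} for $\widetilde{\int}^r_H$ gives an isomorphism of the shape ${}^{S^2}N \cong H^{S^2}$ with $N$ an invertible $H$-bimodule (by two applications of Lemma~\ref{yylem2.5}). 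One then applies Lemma~\ref{yylem2.8}, not Lemma~\ref{yylem2.6}: the point of Lemma~\ref{yylem2.8} is precisely that from ${}^{\sigma}A \cong M^{\tau}$ with $M$ invertible one can conclude that both $\sigma$ and $\tau$ are automorphisms, without ever knowing that $M^{\tau}$ (or $U$) is invertible. Your route via Lemma~\ref{yylem2.6} forces you to prove invertibility of $H^{S^2}$ directly, which is where the unproven Van den Bergh input enters.
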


\begin{proof}
Let $U:= \bigoplus_{s\geq 0}
\Ext^{s}_{H^e}(H, H^e)$. Here, we are 
thinking of $H$ and $H^e$ as left 
$H^e$-modules; but since canonically 
$(H^e)^{\op} = H^e$, there is an 
equivalence of categories $H^e \lMod 
\to \rMod H^e$.  Thinking of $H$ and 
$H^e$ as right $H^e$-modules instead 
and applying this equivalence it is 
easy to see that 
$\bigoplus_{s\geq 0}
\Ext^{s}_{(H^e)^{\op}}(H, H^e) \cong U$ 
as well.  

Recall from Definition \ref{yydef3.2} 
that the left total integral of $H$ is 
$\widetilde{\textstyle \int}^{\ell}_H
:=\bigoplus_{s\geq 0}\Ext^s_{H}(H_t,H)$. 
By Proposition~\ref{yypro4.5}(1), we 
have 
\[
U = 
\bigoplus_{s\geq 0}\Ext^{s}_{H^e}(H, H^e)
\cong \bigoplus_{s\geq 0} 
\Ext^s_{H}(H_t,H)\otensor^{r} H^{S^2}
\cong \widetilde{\textstyle \int}^{\ell}_H 
\otensor^r (H)^{S^2}
\]
as $(H, H)$-bimodules, where the left 
$H$-module structure on 
$\widetilde{\textstyle \int}^{\ell}_H 
\otensor^r (H)^{S^2}$ is given by 
$g \cdot (v \otimes h) = v \otimes gh$. 
We could just as well have developed all 
of the same results on the other side. 
This leads to the symmetric result that 
$U \cong \leftidx{^{S^2}}(H) 
\otensor^{\ell} 
\widetilde{\textstyle \int}^{r}_H$ as 
bimodules, where the left $H$-module 
structure is the one coming from the 
monoidal structure on left $H$-modules, 
and the right module action on 
$\leftidx{^{S^2}}(H) \otensor^{\ell} 
\widetilde{\textstyle \int}^r_H$ is given 
by $(h \otimes v) \cdot g = hg \otimes v$.

We focus first on the right $H$-module 
side of the isomorphism 
$\widetilde{\textstyle \int}^{\ell}_H 
\otensor^r (H)^{S^2} 
\cong \leftidx{^{S^2}} \! (H) \otensor^{\ell} 
\widetilde{\textstyle \int}^r_H$.  Since 
$H$ is a direct sum of AS Gorenstein 
algebras, the right module 
$\widetilde{\textstyle \int}^{\ell}_H$ 
is invertible in the monoidal category 
of right $H$-modules, by 
Proposition~\ref{yypro3.6}(1).  Applying 
$(\widetilde{\textstyle \int}_H^{\ell})^{-1} 
\otensor^r -$ to both sides of our 
isomorphism yields
\[
H^{S^2} \cong 
\left(\widetilde{\textstyle \int}_H^{\ell}\right)^{-1} 
\otensor^r \left( \leftidx{^{S^2}} \! H 
\otensor^{\ell} \widetilde{\textstyle \int}_H^r\right)
\]
as $H$-bimodules. Now since 
$\widetilde{\textstyle \int}_H^r$ is 
an invertible object in the category of 
left $H$-modules 
(by Proposition~\ref{yypro3.6}(2)), we have 
$\leftidx{^{S^2}}{} 
\left(\widetilde{\textstyle \int}_H^r\right) 
\cong \widetilde{\textstyle \int}_H^r$ as 
left modules by Lemma~\ref{yylem2.4} and 
hence
\[
\leftidx{^{S^2}\! H} \otensor^{\ell} 
\widetilde{\textstyle \int}_H^r 
\cong \left(\leftidx{^{S^2}} \! H\right) 
\otensor^{\ell} 
\left(\textstyle {\leftidx{^{S^2}} \! 
\widetilde{\textstyle \int}_H^r}\right) \cong 
\leftidx{^{S^2}} \! 
\left(H \otensor^{\ell} 
\widetilde{\textstyle \int}_H^r\right), 
\]
since twisting by $S^2$ is a monoidal 
functor (for example, by applying  
Lemma~\ref{yylem1.1} twice). These are 
actually isomorphisms of bimodules, where 
the right $H$-action remains via the right 
side of the first tensorand $H$. Because 
$\otensor^r$ is the tensor product in the 
right $H$-module category, we have 
\[
H^{S^2} \cong 
\left(\widetilde{\textstyle \int}_H^{\ell}\right)^{-1} 
\otensor^r \leftidx{^{S^2}} \! 
\left( H \otensor^{\ell} 
\widetilde{\textstyle \int}_H^r\right)
\cong \leftidx{^{S^2}}{} 
\left[\left(\widetilde{\textstyle \int}_H^{\ell}\right)^{-1} 
\otensor^r \left( H \otensor^{\ell} 
\widetilde{\textstyle \int}_H^r\right)\right] 
\]
as $H$-bimodules. Because 
$\widetilde{\textstyle \int}_H^{\ell}$ 
and $\widetilde{\textstyle \int}_H^r$ 
are invertible objects, applying Lemma 
\ref{yylem2.5} twice, 
$(\widetilde{\textstyle \int}_H^{\ell})^{-1} 
\otensor^r ( H \otensor^{\ell} 
\widetilde{\textstyle \int}_H^r)$ is an 
invertible $H$-bimodule. By 
Lemma \ref{yylem2.8}, $S^2$ is an isomorphism, 
whence $S$ is bijective.
\end{proof}

We can now show that the Van den Bergh 
condition and the AS Gorenstein 
properties for a weak Hopf algebra are 
usually simultaneously satisfied.

\begin{theorem}
\label{yythm4.7} 
Let $H$ be a weak Hopf algebra where $H$ 
has injective dimension $d$ as a left and 
right $H$-module. Assume that $H_t$ has a 
projective resolution by finitely 
generated projective left $H$-modules.
\begin{enumerate}
\item[(1)] 
Suppose that $H$ satisfies the Van den 
Bergh condition. Then $H$ is AS Gorenstein 
and the antipode $S$ is a bijection.
\item[(2)] 
Suppose that $H$ is noetherian AS Gorenstein. 
Then $H$ satisfies the Van den Bergh 
condition and the antipode $S$ is a bijection.
\end{enumerate}
\end{theorem}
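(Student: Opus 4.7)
The plan is to deduce both parts from Proposition~\ref{yypro4.5}, which gives the bimodule isomorphism $\Ext^i_{H^e}(H, H^e) \cong \Ext^i_H(H_t, H) \otensor^r H^{S^2}$: part~(2) runs this correspondence forward (AS Gorenstein yields Van den Bergh), while part~(1) runs it backward. The bijectivity of $S$ in each part will follow from the monoidal symmetry argument used in the proof of Theorem~\ref{yythm4.6}.

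For part~(2), since $H$ is noetherian AS Gorenstein, $H$ is trivially a finite direct sum of AS Gorenstein algebras, so Theorem~\ref{yythm4.6} applies and gives that $S$ is bijective; in particular $H^{S^2}$ is an invertible $H$-bimodule. Because $H_t$ is finite-dimensional, the AS Gorenstein hypothesis forces $\Ext^i_H(H_t, H) = 0$ for $i \neq d$. Proposition~\ref{yypro4.5}(1) then yields $\Ext^i_{H^e}(H, H^e) = 0$ for $i \neq d$ and $\Ext^d_{H^e}(H, H^e) \cong \int^{\ell}_H \otensor^r H^{S^2}$. Proposition~\ref{yypro3.7} gives that $\int^{\ell}_H$ is invertible in $\rMod H$, and Lemma~\ref{yylem2.5}(2) then shows that $\int^{\ell}_H \otensor^r H^{S^2}$ is an invertible $H$-bimodule, completing the Van den Bergh conclusion.

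For part~(1), I first extract AS Gorenstein from Van den Bergh. The hypothesis gives $\Ext^i_{H^e}(H, H^e) = 0$ for $i \neq d$, so by Proposition~\ref{yypro4.5}(2) the functor $\Ext^i_H(H_t, H) \otensor^r (-)^{S^2}$ on $\rMod H$ is zero for $i \neq d$. Applying this functor to the unit object $H_s$ of $\otensor^r$, and using that $(H_s)^{S^2} \cong H_s$ as right $H$-modules via the identity $\epsilon_s \circ S^2 = S^2 \circ \epsilon_s$, we obtain $\Ext^i_H(H_t, H) = 0$ for $i \neq d$. For $i = d$, the same specialization gives $\int^{\ell}_H \cong H_s \otimes_H U$, which is finite dimensional because $U$ is finitely generated as a left $H$-module and $H_s$ is finite dimensional. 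Lemma~\ref{yylem3.5}(2) then extends the Ext vanishing and finite dimensionality to all finite-dimensional left $H$-modules, and the right-sided AS Gorenstein condition follows symmetrically.

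To conclude $S$ is bijective in part~(1), I would mirror the strategy of the proof of Theorem~\ref{yythm4.6}: the right-sided analogue of Proposition~\ref{yypro4.5}(1) combined with the left-sided version yields a bimodule isomorphism $\int^{\ell}_H \otensor^r H^{S^2} \cong {}^{S^2}H \otensor^{\ell} \int^r_H$. Having now established that $H$ is AS Gorenstein, Proposition~\ref{yypro3.6} (or Proposition~\ref{yypro3.7}) gives the invertibility of both integrals in their respective monoidal categories; cancelling the integrals via Lemmas~\ref{yylem2.4} and~\ref{yylem2.5} then reduces the isomorphism to the form ${}^{S^2}H \cong M^{\id}$ for an invertible $H$-bimodule $M$, and Lemma~\ref{yylem2.8} forces $S^2$ (and hence $S$) to be bijective. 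The main obstacle I foresee is invoking the invertibility of $\int^{\ell}_H$ in part~(1), since the proof of Proposition~\ref{yypro3.6} uses a noetherian hypothesis that is not assumed here; I expect this can be sidestepped by arguing directly that the invertibility of $U$ as an $H$-bimodule, together with the finite-dimensionality of $\int^{\ell}_H$ just established, forces $\int^{\ell}_H$ to be invertible in $\rMod H$ via a faithfully flat descent from the underlying $H_s$-bimodule structure, using that $H_s$ is separable and $H$ is faithfully flat over it by Theorem~\ref{yythm1.7}.
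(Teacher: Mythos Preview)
Your argument for part~(2) matches the paper's essentially line for line, and your extraction of the AS Gorenstein conditions in part~(1) is also the paper's route (the paper justifies $(H_s)^{S^2}\cong H_s$ via Lemma~\ref{yylem2.4} rather than your $\epsilon_s\circ S^2=S^2\circ\epsilon_s$ identity, but that is cosmetic).

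The genuine gap is in your bijectivity argument for part~(1). You correctly flag that Proposition~\ref{yypro3.6}/\ref{yypro3.7} need a noetherian hypothesis you do not have, but your proposed ``faithfully flat descent'' fix is not a real argument, and there is a second problem you missed: your final step invokes Lemma~\ref{yylem2.8}, which requires $H$ to be orthogonally finite --- again a consequence of noetherian that is not assumed in part~(1). So even if you could salvage invertibility of the integrals, the Theorem~\ref{yythm4.6} template does not close without an extra finiteness hypothesis.

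The paper avoids both issues by working directly with the autoequivalence. Since $U$ is an invertible bimodule, $-\otimes_H U$ is an autoequivalence of $\rMod H$, hence so is $\int_H^{\ell}\otensor^r(-)^{S^2}$ by Proposition~\ref{yypro4.5}(2). Essential surjectivity alone gives some $V$ with $\int_H^{\ell}\otensor^r V^{S^2}\cong H_s$, and then the right-module version of Lemma~\ref{yylem2.3}(3) forces $\int_H^{\ell}$ to be invertible --- no noetherian needed. Factoring the autoequivalence as $(\int_H^{\ell}\otensor^r -)\circ(-)^{S^2}$ now shows $(-)^{S^2}$ is itself an autoequivalence, i.e.\ $H^{S^2}$ is an invertible bimodule, and Lemma~\ref{yylem2.6} (which has \emph{no} finiteness hypothesis on $A$, unlike Lemma~\ref{yylem2.8}) gives that $S^2$ is an automorphism. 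This is both shorter and strictly more general than routing through the two-sided integral comparison.
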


\begin{proof}
(1) We have that $\Ext^i_{H^e}(H, H^e)$ 
is zero for $i \neq d$ and an invertible 
bimodule $U$ when $i = d$.  By 
Proposition~\ref{yypro4.5}(2), 
$- \otimes_H \Ext^i_{H^e}(H, H^e)$ (which 
is a functor from $\rcatMod H \to \rcatMod H$)  
is naturally isomorphic to the functor 
$\Ext^i_H(H_t, H) \otensor^r (-)^{S^2}$. 
For $i \neq d$ this shows that 
$\Ext^i_H(H_t, H) \otensor^r (-)^{S^2}$ is 
the $0$ functor. Applying this to $H_s$, 
since $H_s^{S^2} \cong H_s$ by 
Lemma~\ref{yylem2.4}, we see that 
$\Ext^i_H(H_t, H) = 0$. When $i = d$, since 
$U = \Ext^d_{H^e}(H, H^e)$ is an invertible 
bimodule we get that $- \otimes_H U$ is an 
autoequivalence, so 
$\int_H^{\ell} \otensor^r (-)^{S^2}$ is an 
autoequivalence, where $\int_H^{\ell} 
= \Ext^d_H(H_t, H)$. In particular, it is 
essentially surjective, so there is 
$V \in \rMod H$ such that 
$\int_H^{\ell} \otensor^r V^{S^2} 
\cong H_s$ as right $H$-modules.  By a 
right $H$-module version of 
Lemma~\ref{yylem2.3}, $\int_H^{\ell}$ is 
invertible.  The autoequivalence 
$\int_H^{\ell} \otensor^r (-)^{S^2}$ is 
a composition of $(-)^{S^2}$ and 
$\int_H^{\ell} \otensor^r (-)$, and 
since $\int_H^{\ell}$ is invertible the 
latter functor is also an autoequivalence.  
Thus $(-)^{S^2} : \rMod H \to  \rMod H$ 
is an autoequivalence, or in other words 
$- \otimes_H H^{S^2}$ is an autoequivalence 
and hence $H^{S^2}$ is an invertible 
$(H, H)$-bimodule.  By Lemma~\ref{yylem2.6}, 
$S^2$ must be an automorphism of $H$. 
Thus $S$ is a bijection.   

For any finite-dimensional left 
$H$-module $V$, we have 
$\Ext^d_H(V, H) \cong \int_H^{\ell} 
\otensor^r (V^*)^S$, by 
Lemma~\ref{yylem3.5}(2).  Now we know 
$\int_H^{\ell}$ is invertible and hence 
it must be a finite-dimensional right 
$H$-module. Since $(V^*)^S$ is clearly 
also finite-dimensional, so is 
$\int_H^{\ell} \otensor^r (V^*)^S$. Thus 
$\Ext^d_H(V, H)$ is finite-dimensional 
for all finite-dimensional $V$.

We have verified all of the left-sided 
conditions in the definition of AS 
Gorenstein. Since $H$ satisfies the 
Van den Bergh condition, so does 
$H^{\op, \cop}$, where $H^{\op, \cop}$ 
is also a weak Hopf algebra with 
antipode $S$.  So the right module 
conditions in the definition of AS 
Gorenstein also hold.  

(2) By Theorem \ref{yythm4.6}, $S$ is 
bijective. Assume that $H$ is AS 
Gorenstein of dimension $d$.  By 
Proposition~\ref{yypro3.7}, the left 
integral $\int_H^{\ell} = 
\Ext^d_H(H_t, H)$ is an invertible right 
$H$-module. Let $U = \Ext^d_{H^e}(H, H^e)$. 
By Proposition~\ref{yypro4.5}(2), 
there is an isomorphism 
$- \otimes_H U \cong \int_H^{\ell} 
\otensor^r (-)^{S^2}$ of 
functors $\rMod H \to \rMod H$.  
Similarly as in part (1), the latter 
functor is a composition of two functors 
$(-)^{S^2}$ and $\int^{\ell}_H 
\otensor^r (-)$, the first of which is 
an autoequivalence since $S$ is a 
bijection, and the second of which is 
an autoequivalence since $\int_H^{\ell}$ 
is invertible.  Thus $- \otimes_H U$ is 
an autoequivalence, which forces $U$ to 
be an invertible $(H, H)$-bimodule.  On 
the other hand, for $i \neq d$ we have 
$- \otimes_H \Ext^i_{H^e}(H, H^e): 
\rcatMod H \to \rcatMod H$ is naturally 
isomorphic to the functor 
$\Ext^i_H(H_t, H) \otensor^r (-)^{S^2}$, 
which is $0$ since $\Ext^i_H(H_t, H) = 0$. 
Then $\Ext^i_{H^e}(H, H^e) = 0$.  So $H$ 
satisfies the Van den Bergh condition.
\end{proof}

\begin{corollary}
\label{yycor4.8}
Let $H$ be a noetherian weak Hopf algebra. 
Then $H$ is AS Gorenstein if and only if 
$H$ satisfies the Van den Bergh condition, 
and in either case the antipode $S$ is 
bijective.
\end{corollary}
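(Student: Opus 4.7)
The plan is to deduce this corollary directly from Theorem~\ref{yythm4.7}, by verifying that both technical hypotheses of that theorem are automatic under the noetherian assumption. Theorem~\ref{yythm4.7} requires that (a) $H$ has finite injective dimension $d$ as both a left and right $H$-module, and (b) $H_t$ admits a projective resolution by finitely generated projective left $H$-modules.

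For (a): finite injective dimension on both sides is already part of the definition of AS Gorenstein (Definition~\ref{yydef3.1}(2i)) as well as the Van den Bergh condition (Definition~\ref{yydef3.1}(1i)). So whichever of the two conditions we are starting from, (a) is built in.

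For (b): since $H_t = \epsilon_t(H)$ is a finite-dimensional $\kk$-subalgebra of $H$, it is certainly finitely generated as a left $H$-module. Because $H$ is noetherian, every finitely generated left $H$-module has a projective resolution whose terms are finitely generated projective left $H$-modules (iteratively cover by finitely generated free modules; noetherianness keeps syzygies finitely generated). Thus (b) holds.

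With (a) and (b) in hand, Theorem~\ref{yythm4.7}(1) says that if $H$ satisfies the Van den Bergh condition then $H$ is AS Gorenstein and $S$ is bijective; Theorem~\ref{yythm4.7}(2) says that if $H$ is AS Gorenstein then $H$ satisfies the Van den Bergh condition and $S$ is bijective. Combining these gives the equivalence and the bijectivity of $S$ stated in the corollary. There is no real obstacle here: the entire content has already been extracted in Theorem~\ref{yythm4.7}, and the corollary merely packages it under the clean hypothesis that $H$ is noetherian.
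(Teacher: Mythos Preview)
Your argument is correct and is exactly the intended one: the paper states Corollary~\ref{yycor4.8} without proof immediately after Theorem~\ref{yythm4.7}, and your verification that the noetherian hypothesis forces both standing assumptions of that theorem (finite injective dimension being part of either condition, and $H_t$ admitting a resolution by finitely generated projectives since $H_t$ is finite-dimensional and $H$ is noetherian) is precisely what is being left to the reader.
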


Suppose now that $H$ is a weak Hopf 
algebra of injective dimension $d$ which 
satisfies the Van den Bergh condition. 
Recall that the invertible bimodule 
$U = \Ext^d_{H^e}(H, H^e)$ is called 
the \emph{Nakayama bimodule} 
[Definition \ref{yydef3.1}].  When $H$ 
is a Hopf algebra, then $U \cong 
\prescript{1}{} H^{\mu}$ as an 
$(H, H)$-bimodule, with 
$\mu = \xi \circ S^2$, where $\xi$ is 
the left winding automorphism 
associated to the grouplike element 
given by the left homological integral 
\cite[Section 4.5]{BZ}.  In the weak 
case, because $\int_H^{\ell}$ may not 
be a free right $H_s$-module, $U$ may 
not be a free right $H$-module, so we 
cannot necessarily express $U$ in the 
form $U \cong \prescript{1}{} H^{\mu}$ 
as an $(H, H)$-bimodule.  So there may 
be no Nakayama automorphism $\mu$ in 
the traditional sense, but the 
isomorphism $U \cong \int^{\ell}_H 
\otensor^r (H)^{S^2}$ we found above is 
a clear generalization of the concept, 
which serves the same purpose. For example, 
we can find a formula for the powers of 
the Nakayama bimodule in terms of the 
integral.

\begin{theorem}  
\label{yythm4.9}
Let $H$ be a weak Hopf algebra of 
injective dimension $d$ which satisfies 
the Van den Bergh condition, and let 
$U:= \Ext^d_{H^e}(H, H^e)$ be the 
Nakayama bimodule. Then for all 
$n \geq 1$ we have an isomorphism of 
functors 
$- \otimes_H U^{\otimes n} \cong 
((\int^{\ell}_H)^{\otensor^r n}) \, 
\otensor^r (-)^{S^{2n}}$. 
In particular, 
$U^{\otimes n} \cong 
((\int^{\ell}_H)^{\otensor^r n}) \,
\otensor^r H^{S^{2n}}$.
\end{theorem}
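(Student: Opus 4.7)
The plan is to induct on $n$, using the $n=1$ case as the base and iterating the natural isomorphism $-\otimes_H U \cong \int_H^{\ell} \otensor^r (-)^{S^2}$ of functors $\rMod H \to \rMod H$. The base case $n=1$ is already in hand: it is the $i=d$ instance of Proposition~\ref{yypro4.5}(2), which applies because Theorem~\ref{yythm4.7}(1) tells us that $H$ is AS Gorenstein, so in particular $H_t$ admits a resolution by finitely generated projective left $H$-modules (and the only nonvanishing $\Ext^i_{H^e}(H,H^e)$ occurs at $i=d$).

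For the inductive step, assuming the isomorphism of functors for $n$, I would write
\[
-\otimes_H U^{\otimes (n+1)} \;\cong\; \bigl(-\otimes_H U^{\otimes n}\bigr)\otimes_H U \;\cong\; \bigl((\textstyle\int_H^{\ell})^{\otensor^r n}\otensor^r (-)^{S^{2n}}\bigr)\otimes_H U,
\]
and then apply the $n=1$ case to push $-\otimes_H U$ inside. Concretely, for $M\in\rMod H$ we get
\[
(\textstyle\int_H^{\ell})^{\otensor^r n}\otensor^r \bigl(M^{S^{2n}}\otimes_H U\bigr) \;\cong\; (\textstyle\int_H^{\ell})^{\otensor^r n}\otensor^r \bigl(\textstyle\int_H^{\ell}\otensor^r (M^{S^{2n}})^{S^{2}}\bigr).
\]
Two ingredients are needed to finish. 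First, since twisting by $S^2$ is a monoidal endofunctor of $(\rMod H,\otensor^r,H_s)$ (the right-handed version of Lemma~\ref{yylem1.1} applied twice, exactly as in the proof of Theorem~\ref{yythm4.6}), the functor $(-)^{S^2}$ distributes over $\otensor^r$ up to natural isomorphism. Second, since $\int_H^{\ell}$ is invertible in $(\rMod H,\otensor^r,H_s)$ by Proposition~\ref{yypro3.7}, Lemma~\ref{yylem2.4} gives $(\int_H^{\ell})^{S^2}\cong \int_H^{\ell}$, and iterating yields $(\int_H^{\ell})^{S^{2k}}\cong \int_H^{\ell}$ for every $k\ge 0$. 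Combining these, the right-hand side becomes
\[
(\textstyle\int_H^{\ell})^{\otensor^r n}\otensor^r \textstyle\int_H^{\ell}\otensor^r M^{S^{2(n+1)}} \;=\; (\textstyle\int_H^{\ell})^{\otensor^r (n+1)}\otensor^r M^{S^{2(n+1)}},
\]
which is the desired isomorphism for $n+1$. All the intermediate isomorphisms are natural in $M$, so the result is an isomorphism of functors. The final statement of the theorem then follows by evaluating the functor isomorphism at $M=H$, using $H\otimes_H U^{\otimes n}=U^{\otimes n}$.

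The only place requiring genuine care, rather than formal manipulation, is justifying that $(-)^{S^2}$ distributes over $\otensor^r$ as a natural isomorphism of bimodule-valued functors, and keeping track of which tensorands carry which residual $H$-actions while the inductive computation is in progress; this is essentially bookkeeping rather than a substantive obstacle, given the monoidal-functor framework developed in Section~\ref{yysec1} and the invariance of $\int_H^{\ell}$ under $(-)^{S^2}$ supplied by Lemma~\ref{yylem2.4}.
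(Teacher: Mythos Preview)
Your approach is essentially the paper's, unrolled as an induction: the paper observes that the endofunctors $(-)^{S^2}$ and $\int_H^{\ell}\otensor^r(-)$ of $\rMod H$ commute up to natural isomorphism (using exactly your two ingredients, monoidality of $(-)^{S^2}$ and Lemma~\ref{yylem2.4}), and since $-\otimes_H U$ is their composite, taking $n$th powers gives the result directly. Your induction carries the same content.

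There is, however, one step in your write-up that is not justified as stated. You go from $\bigl((\int_H^{\ell})^{\otensor^r n}\otensor^r M^{S^{2n}}\bigr)\otimes_H U$ to $(\int_H^{\ell})^{\otensor^r n}\otensor^r\bigl(M^{S^{2n}}\otimes_H U\bigr)$, describing this as ``pushing $-\otimes_H U$ inside.'' But there is no general identity $(A\otensor^r B)\otimes_H U\cong A\otensor^r(B\otimes_H U)$: translating via $\mathbb{G}^L$ this would require $\mathbb{G}^L(A)\otimes_H U\cong U\otimes_H\mathbb{G}^L(A)$ as bimodules, which is false in general. The correct move is to apply the $n=1$ isomorphism to the \emph{whole} module $N=(\int_H^{\ell})^{\otensor^r n}\otensor^r M^{S^{2n}}$, obtaining $\int_H^{\ell}\otensor^r N^{S^2}$, and \emph{then} invoke your two ingredients to simplify $N^{S^2}$. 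This is exactly the commutation argument the paper makes, and your listed ingredients suffice for it; only the order in which you deploy them needs repair.

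A smaller point on the base case: you write that Theorem~\ref{yythm4.7}(1) gives AS Gorenstein, ``so in particular $H_t$ admits a resolution by finitely generated projective left $H$-modules.'' That implication does not follow from AS Gorenstein alone, and in fact the finitely generated projective resolution of $H_t$ is a \emph{hypothesis} of both Proposition~\ref{yypro4.5} and Theorem~\ref{yythm4.7}, not a consequence. The paper's proof of Theorem~\ref{yythm4.9} simply invokes Theorem~\ref{yythm4.7}(1), tacitly carrying that standing hypothesis along; you should do the same rather than attempt to derive it.
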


\begin{proof}
By Theorem \ref{yythm4.7}(1), we know 
that $S$ is a bijection. We claim that 
the autoequivalences $(-)^{S^2}$ and 
$\int_H^{\ell} \otensor^r (-)$ commute 
with each other in the group of 
autoequivalences of $\rcatMod H$, up to 
isomorphism of functors.  First note that 
$(\int^{\ell}_H \otensor^r M)^{S^2} \cong 
(\int^{\ell}_H)^{S^2} \otensor^r M^{S^2}$ 
because $S^2$ is a coalgebra map.  But 
since $\int^{\ell}_H$ is invertible, we get 
$(\int^{\ell}_H)^{S^2} \cong \int^{\ell}_H$ 
by Lemma~\ref{yylem2.4}. Thus 
$(\int^{\ell}_H \otensor M)^{S^2} 
\cong \int^{\ell}_H \otensor M^{S^2}$, 
and this isomorphism is natural in $M$, 
proving the claim. Now since $- \otimes_H U$ 
is isomorphic to the composition 
$(\int^{\ell}_H \otensor (-)) 
\circ (-)^{S^2}$, taking $n$th powers the 
result follows.
\end{proof}

\section{Module-finite WHAs are direct 
sums of AS Gorenstein WHAs}
\label{yysec5}

As we saw in the preceding section, knowing 
that a weak Hopf algebra $H$ is a direct 
sum (as algebras) of finitely many AS 
Gorenstein algebras is already sufficient 
to prove some interesting results using 
total homological integrals.  Still, it 
is certainly more convenient if we know 
that a  weak Hopf algebra is a direct sum 
\emph{as weak Hopf algebras} of finitely 
many AS Gorenstein weak Hopf algebras.  
In this section we prove that if $H$ is a 
weak Hopf algebra which is finite over an 
affine center, then we can obtain this 
nicer conclusion.

Let $A$ be a $\kk$-algebra.  We say that 
a left $A$-module $M$ is 
\emph{residually finite} (equivalently, 
\emph{residually finite-dimensional} as 
in \cite[Definition 9.2.9]{Mo}) if
\[
\bigcap \{ M' \mid M'\ 
\text{is an $A$-submodule of}\ M\ 
\text{with}\ \dim_\kk M/M' 
< \infty \} = 0.
\]   
This is equivalent to the natural map 
$M \to \prod_{M'} M/M'$ being an 
injection, where the product is over 
the set of $M'$ above. In particular, 
the algebra $A$ is called (left) 
\emph{residually finite} if $A$ is 
residually finite as a left $A$-module. 
The following lemma is clear.

\begin{lemma}
\label{yylem5.1}
The following hold.
\begin{enumerate}
\item[(1)]
Every locally finite ${\mathbb N}$-graded 
algebra is residually finite. In particular, 
the commutative polynomial ring 
$\Bbbk[x_1,\cdots,x_n]$ is residually finite.
\item[(2)]
If $A_1, A_2, \dots, A_n$ are residually 
finite algebras, then so is the direct sum 
$A_1\oplus A_2\oplus \cdots \oplus A_n$.
\end{enumerate}
\end{lemma}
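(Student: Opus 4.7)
The plan is to prove both parts directly by producing, for any nonzero element $a$ of the algebra, a left $A$-submodule of finite $\kk$-codimension that does not contain $a$; intersecting these submodules then gives $0$.

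For (1), let $A = \bigoplus_{n\geq 0} A_n$ be locally finite $\mathbb{N}$-graded. I would use the two-sided ideals $A_{\geq n} := \bigoplus_{m\geq n} A_m$ as a candidate filtration. Local finiteness gives $A/A_{\geq n} \cong \bigoplus_{m < n} A_m$ finite-dimensional over $\kk$, and any nonzero element of $A$ has support in finitely many graded pieces, so lies outside some $A_{\geq N}$. Hence $\bigcap_n A_{\geq n} = 0$, which is exactly residual finiteness of $A$ as a left $A$-module. The polynomial ring case is then an immediate specialization, since $\kk[x_1,\dots,x_n]$ is locally finite $\mathbb{N}$-graded by total degree.

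For (2), write $A = A_1 \oplus \cdots \oplus A_n$, with associated central idempotents $e_1,\dots,e_n$ summing to $1$. The key structural observation is that every left $A$-submodule $M' \subseteq A$ decomposes as $M' = \bigoplus_i e_i M'$, where each $e_i M'$ is a left $A_i$-submodule of $A_i$; this is because the $e_i$ are central idempotents, so each $M'$ is stable under multiplication by them. Given any nonzero $a = a_1 + \cdots + a_n \in A$, some $a_j \neq 0$, and residual finiteness of $A_j$ produces a left $A_j$-submodule $N_j \subsetneq A_j$ of finite $\kk$-codimension with $a_j \notin N_j$. Setting
\[
N := N_j \;\oplus\; \bigoplus_{i\neq j} A_i,
\]
which is a left $A$-submodule of $A$ by centrality of the $e_i$, we have $A/N \cong A_j/N_j$ finite-dimensional and $a \notin N$. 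Intersecting over all nonzero $a$ yields $0$, so $A$ is residually finite.

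Neither part presents a genuine obstacle; the only subtle point is noticing in (2) that submodules of the direct sum decompose along the idempotent decomposition, which reduces the statement to the component algebras.
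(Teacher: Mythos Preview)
Your proof is correct. The paper does not actually give a proof of this lemma; it simply states ``The following lemma is clear'' and moves on, so your argument fills in the details that the authors omitted as obvious, and does so along the natural lines (the filtration by $A_{\geq n}$ for part (1), and the idempotent decomposition for part (2)).
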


\begin{proposition}
\label{yypro5.2}
Let $H$ be a weak Hopf algebra that is a 
finite module over its affine center. Then 
$H$ is residually finite.
\end{proposition}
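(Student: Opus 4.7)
My plan is to produce, for each nonzero $h\in H$, a two-sided ideal of $H$ of finite codimension over $\kk$ not containing $h$.  Let $Z$ denote the affine center, so $Z$ is a Noetherian commutative $\kk$-algebra and $H$ is a finitely generated $Z$-module.  Since $Z$ is central, for any ideal $I\subseteq Z$ the product $IH$ is a two-sided ideal of $H$, and the natural family to test is $\{\mathfrak{m}^n H\}$, indexed over maximal ideals $\mathfrak{m}$ of $Z$ and integers $n\geq 1$; it would suffice to show each $H/\mathfrak{m}^n H$ is finite dimensional over $\kk$ and that $\bigcap_{\mathfrak{m},n}\mathfrak{m}^n H=0$.

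For the finite-dimensionality, I would invoke Zariski's lemma to conclude that $Z/\mathfrak{m}$ is a finite field extension of $\kk$.  The $\mathfrak{m}$-adic filtration of $Z/\mathfrak{m}^n$ has successive quotients $\mathfrak{m}^i/\mathfrak{m}^{i+1}$, each a finitely generated $Z/\mathfrak{m}$-module (since $Z$ is Noetherian) and hence finite dimensional over $\kk$; so $Z/\mathfrak{m}^n$ is finite dimensional.  Because $H$ is finitely generated over $Z$, the quotient $H/\mathfrak{m}^n H$ is a finitely generated $Z/\mathfrak{m}^n$-module, and thus finite dimensional over $\kk$.

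For the triviality of the intersection, let $h$ lie in $\bigcap_{\mathfrak{m},n}\mathfrak{m}^n H$ and fix a maximal ideal $\mathfrak{m}$ of $Z$.  Localizing at $\mathfrak{m}$ places the image of $h$ in $\bigcap_n (\mathfrak{m} Z_\mathfrak{m})^n H_\mathfrak{m}$, which vanishes by Krull's intersection theorem applied to the finitely generated module $H_\mathfrak{m}$ over the Noetherian local ring $Z_\mathfrak{m}$.  Hence some $s\in Z\setminus\mathfrak{m}$ annihilates $h$, meaning $\operatorname{Ann}_Z(h)\not\subseteq\mathfrak{m}$.  Since this holds for every maximal ideal of $Z$ and every proper ideal is contained in some maximal ideal, $\operatorname{Ann}_Z(h)=Z$, forcing $h=0$.

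The substantive inputs are Zariski's lemma and Krull's intersection theorem, both standard for Noetherian affine commutative algebras.  I do not anticipate any serious obstacle: no special feature of the weak Hopf structure is used, only that $Z$ is affine (hence Noetherian) and central in $H$.  If one prefers, one can instead invoke Lemma~\ref{yylem5.1} for $Z$ itself plus the machinery above to pass from $Z$ to $H$, but the direct approach via $\{\mathfrak{m}^n H\}$ is the most economical route.
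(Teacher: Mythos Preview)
Your argument is correct and is in fact more elementary and more general than the paper's.  The paper does not work directly with the center $Z$; instead it first invokes \cite[Theorem~0.3]{RWZ1} to decompose $H=\bigoplus H_i$ into AS~Gorenstein pieces, then for each $H_i$ applies Noether normalization to its center to obtain a polynomial subalgebra $C$, uses further results from \cite{RWZ1} to deduce that $H_i$ is \emph{free} over $C$, and finally transfers residual finiteness from $C$ to $H_i$ via this freeness (the freeness is what makes $\bigcap_s I_s\otimes_C H_i=0$ follow from $\bigcap_s I_s=0$).  Your route through the family $\{\mathfrak{m}^n H\}$, Zariski's lemma, and Krull's intersection theorem applied to the localizations $H_{\mathfrak{m}}$ bypasses all of this: you never need freeness or flatness, never need the weak Hopf structure, and in effect prove the stronger statement that \emph{any} algebra finite over an affine central subalgebra is residually finite.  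The paper's approach has the minor advantage of reusing machinery it needs elsewhere, but as a standalone proof yours is cleaner.
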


\begin{proof} By \cite[Theorem 0.3(1)]{RWZ1}, 
$H$ is a direct sum $\bigoplus_{i=1}^n H_i$, 
of indecomposable noetherian algebras which 
are AS Gorenstein, Auslander Gorenstein, 
Cohen--Macaulay, and homogeneous of finite 
Gelfand--Kirillov dimension equal to their 
injective dimension (see \cite{RWZ1} for 
definitions). By Lemma \ref{yylem5.1}(2), it 
remains to show that each $H_i$ is residually 
finite.

Fix some $1 \leq i \leq n$ and let $A = H_i$. 
Since $H$ is affine, so is $A$. Let $Z$ be 
the center of $A$, which is affine, since 
$Z(H)$ is affine. Now by the Noether 
Normalization Theorem, $Z$ contains a 
subalgebra $C$, such that $Z$ is a finite 
module over $C$ and $C$ is isomorphic to a 
polynomial ring $\Bbbk[x_1,\cdots,x_n]$. 
By \cite[Theorem 0.3(1)]{RWZ1}, $A$ satisfies 
\cite[Hypothesis 4.1]{RWZ1}, and clearly, 
$C$ satisfies \cite[Hypothesis 4.1]{RWZ1}. By 
\cite[Lemma 4.4]{RWZ1}, $H_i$ is a projective 
module over $C$, therefore free over $C$.

Since $C$ is a polynomial ring, there is a 
sequence of co-finite-dimensional ideals 
$\{I_{s}\}_s$ such that $\bigcap_s I_s=0$.
Consider the short exact sequence
$$0\to I_s\to C\to C/I_s\to 0,$$
which induces a short exact sequence
$$0\to I_s\otimes_{C} A\to A\to C/I_s
\otimes_C A\to 0.$$
Since $A$ is a free module over $C$, 
therefore $\bigcap_{s}I_s\otimes_{C} A=0$. 
Since each $C/I_s\otimes_C A$ is finite 
dimensional, $I_s\otimes_C A$ is a 
co-finite-dimensional ideal of $A$. Thus 
$A$ is residually finite as required.
\end{proof}

\begin{lemma}
\label{yylem5.3}
Let $H$ be a weak Hopf algebra over $\kk$. 
Suppose that $H = H_1 \oplus H_2 \oplus 
\dots \oplus H_m$ as algebras, where 
$H_i = e_i H$ for a set 
$\{ e_i \mid 1 \leq i \leq m \}$ of central 
pairwise orthogonal idempotents with 
$1 = e_1 + e_2 + \dots + e_m$.
\begin{enumerate}
\item[(1)] 
Suppose that $\Delta(H_i) \subseteq H_i 
\otimes_\kk H_i$ and $S(H_i) \subseteq H_i$ 
for all $i$. Then each $H_i$ is a weak 
Hopf algebra over $\kk$, and so 
$H = H_1 \oplus H_2 \oplus \dots \oplus H_m$ 
as weak Hopf algebras.
\item[(2)] 
Assume that $H$ is residually finite.  
Suppose that for all $i$ and for all 
finite-dimensional modules $V \in H_i \lMod$, 
$W \in H_j \lMod$ we have 
$V \otensor^{\ell} W = 0$ if $i \neq j$; 
$V \otensor^{\ell} W \in H_i \lMod$ if 
$i = j$; and $V^* \in H_i \lMod$.  Then 
the hypotheses of part {\rm{(1)}} hold 
and  $H = H_1 \oplus H_2 \oplus 
\dots \oplus H_m$ as weak Hopf algebras.
\end{enumerate}
\end{lemma}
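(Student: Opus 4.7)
The plan is to dispatch part (1) as routine bookkeeping, and reserve the real work for part (2), where residual finiteness is used twice to extract the hypotheses of part (1) from the tensor-product and dual hypotheses on the module categories.

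For part (1), given central orthogonal idempotents $\{e_i\}$ with $\Delta(H_i) \subseteq H_i \otimes H_i$ and $S(H_i) \subseteq H_i$, each summand $H_i$ carries candidate weak Hopf data: unit $e_i$, comultiplication $\Delta|_{H_i}$, counit $\epsilon|_{H_i}$, and antipode $S|_{H_i}$. The weak Hopf axioms for $H$ all involve expressions built from $\Delta(1)$, $\epsilon_t$, $\epsilon_s$, and $S$. Using the orthogonal decomposition $\Delta(1) = \sum_i \Delta(e_i)$ in $H \otimes H = \bigoplus_{j,k} H_j \otimes H_k$, the identity $S(1) = 1 = \sum_i e_i$, and the vanishing of cross-terms of the form $H_i H_j$ or $(e_i \otimes e_j)\Delta(e_l)$ for distinct indices, each global axiom decomposes into an independent axiom on each $H_i^{\otimes n}$ summand. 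In particular the ambient $\epsilon_t$ restricted to $H_i$ coincides with the intrinsic target counital of $H_i$, since cross-summands $(e_j)_1 h \in H_j H_i$ vanish when $j \neq i$.

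For part (2), the main tool is that a direct summand of a residually finite algebra is residually finite (take any $0 \neq x \in H_j$, find a cofinite left ideal $J$ of $H$ avoiding $x$, and note that $J \cap H_j$ is then cofinite in $H_j$), and consequently $H_j \otimes_\kk H_k$ acts residually finitely against modules of the form $V \otimes W$ with $V \in H_j\lMod$, $W \in H_k\lMod$ finite-dimensional: writing $x = \sum_{n=1}^N a_n \otimes b_n$ with $\{b_n\}$ linearly independent, residual finiteness of $H_k$ lets us embed any finite-dimensional subspace into a finite-dimensional quotient, so we may choose $W$ in which the images of the $b_n$ remain linearly independent, and then vanishing of $x$ on $V \otimes W$ for arbitrary finite-dimensional $V$ forces each $a_n = 0$. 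With this in hand, to show $\Delta(e_i) \in H_i \otimes H_i$, fix $(j,k) \neq (i,i)$ and take any finite-dimensional $V \in H_j\lMod$, $W \in H_k\lMod$. Then
\[
\Delta(e_i)(V \otimes W) = \Delta(e_i)\Delta(1)(V \otimes W) = e_i \cdot (V \otensor^{\ell} W) = 0,
\]
using that $\Delta(1)(V \otimes W) = V \otensor^{\ell} W$, that the $H$-action on $V \otensor^{\ell} W$ is by $\Delta$, and that the hypothesis forces either $V \otensor^{\ell} W = 0$ (when $j \neq k$) or $V \otensor^{\ell} W \in H_j\lMod$ annihilated by $e_i$ (when $j = k \neq i$). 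Since $e_j \otimes e_k$ acts as the identity on $V \otimes W$, the element $(e_j \otimes e_k)\Delta(e_i) \in H_j \otimes H_k$ annihilates every such $V \otimes W$ and hence vanishes; summing gives $\Delta(e_i) = (e_i \otimes e_i)\Delta(e_i) \in H_i \otimes H_i$.

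For the antipode, by centrality of $e_j$ and the two-sided ideal structure of $H_j$ it suffices to show $S(e_j) \in H_j$. Fix $i \neq j$ and let $V \in H_i\lMod$ be finite-dimensional. The hypothesis $V^* \in H_i\lMod$ says $e_j \cdot V^* = 0$; since $(e_j \cdot \phi)(v) = \phi(S(e_j)v)$ by definition of the left dual action, this forces $S(e_j)v = 0$ for all $v \in V$. Thus $e_i S(e_j) \in H_i$ (the image of $S(e_j)$ under $H \to H_i$) annihilates every finite-dimensional $H_i$-module, and residual finiteness of $H_i$ yields $e_i S(e_j) = 0$. Ranging over $i \neq j$ gives $S(e_j) = e_j S(e_j) \in H_j$. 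This establishes the hypotheses of part (1) and completes the proof. The main obstacle is precisely the passage from \emph{annihilates every finite-dimensional module} to \emph{vanishes in the algebra}: this is the point where the residual finiteness hypothesis is indispensable, and it is what both key computations rely on.
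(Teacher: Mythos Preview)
Your proof is correct and follows essentially the same strategy as the paper: residual finiteness is used to pass from the module-category hypotheses to the element-level conclusions $\Delta(e_i)\in H_i\otimes H_i$ and $S(e_j)\in H_j$, and part (1) is then routine. The only organizational difference is in the $\Delta$ step: the paper first extends the tensor-product hypotheses from finite-dimensional to arbitrary residually finite modules and then applies this to $H_p\otensor^{\ell} H_q$ itself, whereas you work directly with the element $(e_j\otimes e_k)\Delta(e_i)$ and use residual finiteness of $H_j$ and $H_k$ separately to force it to vanish; the antipode argument is identical to the paper's.
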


\begin{proof}
(1)  The hypothesis implies that it 
makes sense to ask whether the algebra 
$H_i$ (with unit element $e_i$) is a 
weak Hopf algebra with coproduct 
$\Delta_i = \Delta \vert_{H_i}: H_i 
\to H_i \otimes_\kk H_i$, counit 
$\epsilon_i=\epsilon\vert_{H_i}: 
H_i \to \kk$ and antipode 
$S_i = S \vert_{H_i}: H_i \to H_i$. 
The axioms of a coalgebra for $H$ 
immediately restrict to give 
that $(H_i, \Delta_i, \epsilon_i)$ 
is a coalgebra.  Similarly, 
$\Delta_i(gh) = \Delta_i(g)\Delta_i(h)$ 
for $g, h \in H_i$ and 
$\epsilon_i(fgh) 
= \epsilon_i(fg_1)\epsilon_i(g_2h) 
= \epsilon_i(fg_2) \epsilon_i(g_1h)$ 
for $f, g, h \in H_i$ since these are 
properties of $H$ and we are restricting.  

Now by assumption, for each 
$1 \leq i \leq m$, we have 
$\Delta(e_i) \in e_i H \otimes_{\kk} e_i H 
= (e_i \otimes e_i)(H \otimes H)$, 
say $\Delta(e_i) = (e_i \otimes e_i)\Omega_i$ 
with $\Omega_i \in H \otimes H$. Since 
$\Delta(1) = \sum_{i=1}^m \Delta(e_i) 
= \sum_{i=1}^m (e_i \otimes e_i) \Omega_i$, 
we see that $(e_i \otimes e_i) \Delta(1) 
= (e_i \otimes e_i) \Omega_i = \Delta(e_i)$.  
Similarly, $\Delta^2(e_i) \in 
(e_i \otimes e_i \otimes e_i)(H^{\otimes 3})$ 
implies that $\Delta^2(e_i) 
= (e_i \otimes e_i \otimes e_i)(\Delta^2(1))$. 
Now multiplying both sides of the identity 
\[
\Delta^2(1) = (\Delta(1) \otimes 1)(1 \otimes \Delta(1))
= (1 \otimes \Delta(1))(\Delta(1) \otimes 1)
\]
by $e_i \otimes e_i \otimes e_i$ gives the 
required identity
\[
\Delta_i^2(e_i) = 
(\Delta_i(e_i) \otimes e_i)(e_i \otimes \Delta_i(e_i)) 
= (e_i \otimes \Delta_i(e_i))(\Delta_i(e_i) \otimes e_i).
\]

Thus $H_i$ is a weak bialgebra with the 
given operations. Note that the map 
$(\epsilon_t)_i$ for this algebra is given 
for $h \in H_i$ by 
\[
(\epsilon_t)_i(h) = 
\epsilon_i(\Delta_i(e_i)_1 h)) \Delta_i(e_i)_2 
= \epsilon(e_i 1_1 h) e_i 1_2 = e_i \epsilon(1_1 e_i h)1_2  
= e_i \epsilon(1_1 h) 1_2 = e_i \epsilon_t(h).
\]
Similarly, $(\epsilon_s)_i(h) = e_i \epsilon_s(h)$. 
Thus, for example, since we have 
$h_1 S(h_2) = \epsilon_t(h)$ in $H$, multiplying 
by $e_i$ on both sides and using that 
$\Delta(h) \in e_iH \otimes e_iH$ gives 
\[
\Delta_i(h)_1 S_i(\Delta_i(h)_2) 
= h_1 S(h_2) = e_i h_1 S(h_2) 
= e_i \epsilon_t(h) = (\epsilon_t)_i(h).
\]
The other two requires properties of the 
antipode follow similarly. So each $H_i$ is 
a weak Hopf algebra.

Finally, recall that the weak Hopf algebra 
structure on the direct sum 
$H_1 \oplus H_2 \oplus \dots \oplus H_m$ is 
defined by coordinatewise coproduct and 
antipode, and counit $\epsilon(h_1, \dots, h_m) 
= \sum_{i=1}^m \epsilon_i(h_i)$. It is clear 
that this is the same as the original weak 
Hopf algebra structure on $H$.

(2)  Recall that $V\in H_i\lMod$ means 
that $e_j V=0$ for all $j\neq i$. First we 
show that the finite-dimensional modules 
$V$ and $W$ can be replaced by any 
\emph{residually} finite modules in the 
first two hypotheses.  So suppose that 
$M \in H_i \lMod$ and $N \in H_j \lMod$ are 
residually finite. Take any left $H$-submodules 
$M' \subseteq M$ and $N' \subseteq N$ with 
$\dim_{\kk} M/M' < \infty$ and 
$\dim_{\kk} N/N' < \infty$. Then $(M/M') 
\otensor^{\ell} (N/N') = 0$ if $i \neq j$, and 
$(M/M') \otensor^{\ell} (N/N') \in H_i \lMod$ 
if $i = j$, by hypothesis.  Note that there 
is a short exact sequence
\begin{equation}
\label{E5.3.1}\tag{E5.3.1}
0 \to M' \otimes_\kk N + M \otimes_\kk N' 
\to M \otimes_\kk N \overset{\phi}{\to} M/M' 
\otimes_\kk N/N' \to 0.
\end{equation}
Given any $x \in M \otimes_{\kk} N$, there 
are finite-dimensional subspaces 
$M'' \subseteq M$ and $N'' \subseteq N$ 
such that $x \in M'' \otimes_{\kk} N''$. 
Since the intersection of all submodules 
$M'$ such that $M/M'$ is finite-dimensional 
is $0$, we can choose such an $M'$ with 
$M' \cap M'' = 0$. Similarly, choose a 
submodule $N'$ of $N$ with 
$\dim_{\kk} N/N' < \infty$ and 
$N' \cap N'' = 0$. By construction, 
$0 \neq \phi(x) \in M/M' \otimes_{\kk} N/N'$ 
above, and thus $x \not \in M' 
\otimes_{\kk} N + M \otimes_{\kk} N'$. 
It follows that $\bigcap_{M', N'} 
(M' \otimes_{\kk} N + M \otimes_{\kk} N') = 0$, 
where the intersection is over all $M', N'$ 
of the form above.  

Now multiplying \eqref{E5.3.1} above on the 
left by $\Delta(1)$ we have an exact sequence
\[
0 \to M' \otensor^{\ell} N + M \otensor^{\ell} N' 
\to M \otensor^{\ell} N 
\to M/M' \otensor^{\ell} N/N' \to 0.
\]
We also have $\bigcap_{M', N'} 
(M' \otensor^{\ell} N + M \otensor^{\ell} N') = 0$, 
since it is a subset of the intersection above.  
This shows that we have an injective map 
\[
M \otensor^{\ell} N 
\to \prod_{M', N'} M/M' \otensor^{\ell} N/N'
\]
and in particular that $M \otensor^{\ell} N$ 
is again residually finite.  Now when 
$i \neq j$ the right hand side is $0$, so 
$M \otensor^{\ell} N = 0$.  Similarly, when 
$i = j$ the right hand side is in $H_i \lMod$, 
so $M \otensor^{\ell} N \in H_i \lMod$.

By hypothesis, $H$ is residually finite, so 
each module $H_i$, being a factor module, is 
also residually finite. Applying the result 
of the previous paragraph gives 
$H_i \otensor^{\ell} H_j = 0$ for $i \neq j$ 
and $H_i \otensor^{\ell} H_i \in H_i \lMod$.

We now show how this implies the first 
hypothesis of part (1). Fix $i$ and write 
$\Delta(e_i) = \sum_{r,s = 1}^m \Omega_{r,s}
(e_r \otimes e_s)$ for some 
$\Omega_{r,s} \in e_r H \otimes e_s H$. Then 
\[
e_i \cdot (H_p \otensor^{\ell} H_q) 
= \Delta(e_i) 
\Delta(1)(H_p \otimes_{\kk} H_q) 
= \Delta(e_i)(H_p \otimes_{\kk} H_q) 
= \sum_{r,s} \Omega_{r,s} (e_r \otimes e_s)
(H_p \otimes_{\kk} H_q) 
= \Omega_{p,q} (H_p \otimes_{\kk} H_q).
\]
If $p \neq q$ or if $p = q \neq i$, then 
$e_i \cdot (H_p \otensor^{\ell} H_q) = 0$ 
and so $\Omega_{p,q} 
= \Omega_{p,q}(e_p \otimes e_q) \in \Omega_{p,q} 
(H_p \otimes_{\kk} H_q) = 0$, so 
$\Omega_{p,q} =0$.  Thus $\Delta(e_i) 
= \Omega_{i,i}(e_i \otimes e_i) \in e_i H 
\otimes_{\kk} e_i H = H_i \otimes_{\kk} H_i$.

Now consider the hypothesis that if 
$V \in H_i \lMod$ is finite-dimensional, 
then $V^* \in H_i \lMod$; we use this to 
show the second hypothesis of (1) that 
$S(H_i) \subseteq H_i$ for all $i$.  Given 
$v \in V$, $\phi \in V^*$, and some $e_j$ 
we have $[e_j \phi](v) = \phi(S(e_j)v)$ by 
definition.  If $j \neq i$ then $e_j V^* = 0$ 
so $e_j \phi = 0$, and thus $\phi(S(e_j)v) = 0$.  
Since this holds for all $\phi \in V^*$, we 
conclude that $S(e_j)v = 0$.  Since this holds 
for all $v \in V$, we have $S(e_j) V = 0$.  
If $M$ is a residually finite $H_i$-module, 
then $M$ embeds in a product of 
finite-dimensional $H_i$-modules, so we 
conclude that $S(e_j) M = 0$ as well.  Since 
$H_i$ itself is residually finite, as noted 
above, we have $S(e_j) H_i = 0$. In particular, 
$S(e_j) e_i = 0$ if $i \neq j$.  This shows 
that $S(e_i) \in e_i H$ for any $i$, so $S(e_i H) 
\subseteq e_i H$ and we are done.
\end{proof}

Now we are ready to prove the main result of 
this section.

\begin{theorem}
\label{yythm5.4}
Let $H$ be a noetherian weak Hopf algebra
such that $H$ is residually finite as a 
left $H$-module. Assume in addition that 
$H \cong K_1 \oplus \dots \oplus K_n$ as 
algebras, where each $K_i$ is an AS 
Gorenstein algebra. Then $H \cong H_1 
\oplus \dots \oplus H_m$ as weak Hopf 
algebras, where each $H_i$ is an AS 
Gorenstein weak Hopf algebra. 
\end{theorem}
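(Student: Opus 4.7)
The plan is to produce a partition of the indices $\{1, \ldots, n\}$ whose blocks define the weak Hopf summands. Without loss of generality one may refine the given decomposition so that each $K_i$ is indecomposable with $K_i = f_i H$ for a primitive central idempotent $f_i$, since an indecomposable summand of an AS Gorenstein algebra remains AS Gorenstein of the same injective dimension $d_i$. Define a relation $\to$ on $\{1, \ldots, n\}$ by declaring $i \to j$ whenever at least one of $(f_j \otimes 1) \Delta(f_i)$, $(1 \otimes f_j) \Delta(f_i)$, or $S(f_i) f_j$ is nonzero, and let $\sim$ be the equivalence relation generated by $\to$. The equivalence classes $I_1, \ldots, I_m$ give central idempotents $e_\alpha = \sum_{i \in I_\alpha} f_i$ and subalgebras $H_\alpha = e_\alpha H$ satisfying $\Delta(H_\alpha) \subseteq H_\alpha \otimes_\kk H_\alpha$ and $S(H_\alpha) \subseteq H_\alpha$ directly from the definition of $\sim$. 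Lemma~\ref{yylem5.3}(1) then delivers a decomposition $H = \bigoplus_\alpha H_\alpha$ as weak Hopf algebras.

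Each $H_\alpha$ is a noetherian weak Hopf algebra that is a finite sum (as algebras) of the AS Gorenstein pieces $K_i$ ($i \in I_\alpha$), so by Theorem~\ref{yythm4.6} its antipode is bijective. It remains only to show that all $d_i$ ($i \in I_\alpha$) coincide; once established, the usual splitting of $\Ext$ along the block decomposition shows that $H_\alpha = \bigoplus_{i \in I_\alpha} K_i$ is AS Gorenstein of the common dimension. As $\sim$ is generated by $\to$, it suffices to verify $d$-invariance on each generating relation. For the $S$ relation, bijectivity of $S$ on $H_\alpha$ forces $S$ to permute the primitive central idempotents of $H_\alpha$, giving $S(f_i) = f_{\sigma(i)}$ for a permutation $\sigma$ of $I_\alpha$; if $S(f_i) f_j \neq 0$ then $j = \sigma(i)$, and the anti-isomorphism $S : K_i \to K_j$ gives $d_i = d_j$ by symmetry of the AS Gorenstein property.

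For the $\Delta$ relation, assume $\Delta(f_i)(f_j \otimes f_k) \neq 0$. Using residual finiteness of $H$ (hence of $K_j$ and $K_k$), one can find finite-dimensional $V \in K_j\lMod$ and $W \in K_k\lMod$ on which the element $\Delta(f_i)(f_j \otimes f_k) \in K_j \otimes_\kk K_k$ acts nontrivially, equivalently $f_i(V \otensor^\ell W) \neq 0$. Lemmas~\ref{yylem3.4} and~\ref{yylem3.5}(1) give
\[
\Ext^s_{H_\alpha}(V \otensor^\ell W, H_\alpha) \cong \Ext^s_{K_j}(V, K_j) \otensor^r (W^*)^S,
\]
which is concentrated in degree $d_j$ by AS Gorenstein-ness of $K_j$. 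On the other hand, the block splitting $\Ext^*_{H_\alpha}(-, H_\alpha) = \bigoplus_r \Ext^*_{K_r}(f_r(-), K_r)$ shows that the $K_i$-summand $\Ext^{d_i}_{K_i}(f_i(V \otensor^\ell W), K_i)$ is nonzero by AS Gorenstein duality on $K_i$, contributing a nonzero piece in degree $d_i$; hence $d_i = d_j$. To obtain $d_i = d_k$, use the anti-comultiplicativity identity $\Delta \circ S = \tau \circ (S \otimes S) \circ \Delta$ (with $\tau$ the flip): nonvanishing of $\Delta(f_i)(f_j \otimes f_k)$ translates into nonvanishing of $\Delta(f_{\sigma(i)})(f_{\sigma(k)} \otimes f_{\sigma(j)})$, so applying the previous step to this new triple, together with the fact that $\sigma$ preserves $d$, gives $d_i = d_k$.

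The main obstacle is the $\Ext$ computation forcing $d_i = d_j$ whenever $\Delta(f_i)(f_j \otimes f_k) \neq 0$; once this and its symmetric counterpart are in place, the remainder reduces to bookkeeping with the equivalence relation. A minor technical step is verifying that residual finiteness of $H$ suffices to realize the nonvanishing of the element $\Delta(f_i)(f_j \otimes f_k) \in K_j \otimes_\kk K_k$ on a finite-dimensional tensor product of modules.
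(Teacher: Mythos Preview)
Your argument is correct, but it runs in the opposite direction from the paper's.  The paper first \emph{groups the $K_i$ by injective dimension}, obtaining $H = H_1 \oplus \dots \oplus H_m$ with $d_1 < \dots < d_m$, and then invokes Lemma~\ref{yylem5.3}(2) to check that this particular decomposition respects the weak Hopf structure.  The key there is the characterization ``$V \in H_i\lMod$ iff $\Ext^s_H(V,H)$ is concentrated in degree $d_i$,'' together with the same Ext computation you use (Lemmas~\ref{yylem3.4}, \ref{yylem3.5}); the right-tensorand case is handled by passing to $H^{\cop}$ (legal since $S$ is bijective), and the dual case by the rigidity axiom $V^* \hookrightarrow V^* \otensor^\ell V \otensor^\ell V^* \twoheadrightarrow V^*$.

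You instead first take the \emph{finest} weak Hopf decomposition, built combinatorially from the idempotent relation $\to$, and then argue that $d_i$ is constant on each block.  This is a legitimate and in some ways more informative route: it yields the minimal decomposition rather than the one indexed by injective dimension, and it uses Lemma~\ref{yylem5.3}(1) directly rather than the module-theoretic criterion of part~(2).  Your handling of the two harder generators---the $\Delta$-relation via the Ext computation, and the second tensor slot via anti-comultiplicativity of $S$ together with the permutation $\sigma$---is correct and parallels the paper's use of $H^{\cop}$.  The flagged ``minor technical step'' (realizing nonvanishing of $\Delta(f_i)(f_j \otimes f_k)$ on a tensor product of finite-dimensional quotients) is exactly the residual-finiteness argument carried out inside the proof of Lemma~\ref{yylem5.3}(2): choose cofinite left ideals $J_j \subseteq K_j$, $J_k \subseteq K_k$ missing the finitely many tensor components of the given element, and take $V = K_j/J_j$, $W = K_k/J_k$.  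One small point worth making explicit: your opening claim that an indecomposable ring summand of an AS Gorenstein algebra is again AS Gorenstein \emph{of the same injective dimension} uses that each summand has nonzero finite-dimensional modules, which follows here from residual finiteness of $H$ (and hence of each $K_i$).
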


\begin{proof}
By Theorem \ref{yythm4.6}, the antipode 
$S$ is invertible, which will be used 
later in this proof. 

It is clear that a direct sum of finitely 
many AS Gorenstein algebras of dimension 
$d$ is also AS Gorenstein of dimension $d$, 
according to Definition~\ref{yydef3.1}.
Now considering $H \cong K_1 \oplus \dots 
\oplus K_n$, we can group together those 
$K_i$ of the same injective dimension, 
obtaining a decomposition 
$H \cong H_1 \oplus \dots \oplus H_m$ 
where each $H_i$ is an AS Gorenstein algebra 
of dimension $d_i$, say, and where 
$d_1 < d_2 < \dots < d_m$.  We now show that 
each $H_i$ must be a weak Hopf algebra and 
that the isomorphism is as weak Hopf algebras.  
Let $1 = e_1 + \dots + e_m$ be the decomposition 
into central pairwise commuting idempotents 
such that $H_i = e_i H$, and consider 
$H \lMod = H_1 \lMod \times \dots \times H_m \lMod$.

We verify the hypotheses of Lemma~\ref{yylem5.3}(2). 
Suppose that $V \in H \lMod$ is 
finite-dimensional. Note that if 
$V \in H_i\lMod$, then 
$\Ext^j_H(V, H) = \Ext^j_{H_i}(V, H_i) = 0$ 
for $j \neq d_i$.  If moreover $V \neq 0$, then 
$\Ext^{d_i}_H(V, H) = \Ext^{d_i}_{H_i}(V, H_i) \neq 0$ 
since $\Ext^{d_i}_{H_i}(-, H_i)$ is a duality 
from finite-dimensional modules in $H_i \lMod$ 
to finite-dimensional modules in $\rMod H_i$.  
Conversely, if $\Ext^j_H(V, H) = 0$ for all 
$j \neq d_i$ then the reverse argument shows 
that $V \in H_i \lMod$.  

Now let $V, W \in H \lMod$ both be finite-dimensional. 
Suppose that $V \in H_i \lMod$.  We have 
\[
\Ext^j_H(V \otensor^{\ell} W, H) 
\cong \Ext^j_H(V, H \otensor^{\ell} W^*) 
\cong \Ext^j_H(V, H \otensor^r (W^*)^S)  
\cong \Ext^j_H(V, H) \otensor^r (W^*)^S
\]
by Lemmas~\ref{yylem3.4} and \ref{yylem3.5}. 
Thus for $j \neq d_i$, since $\Ext^j_H(V,H) = 0$, 
we have $\Ext^j_H(V \otensor^{\ell} W, H) = 0$. 
Thus $V \otensor^{\ell} W \in H_i \lMod$ as well.

Similarly, suppose that $V, W \in H \lMod$ are 
finite-dimensional but now assume only that 
$W \in H_j \lMod$.  Since $S$ is invertible, 
$H^{\cop}$ is again a weak Hopf algebra, with 
the same algebra structure, opposite coproduct, 
and antipode $S^{-1}$.  Since $H^{\cop} = H$ as 
algebras, they have the same idempotent 
decomposition.  Applying the result of the 
previous paragraph to the weak Hopf algebra 
$H^{\cop}$ gives that $V \otensor^{\ell} W 
\in H_j \lMod$. 

Now if $V \in H_i \lMod$ and $W \in H_j \lMod$ 
are finite-dimensional, since $H_i \lMod 
\cap H_j \lMod = 0$ for $i \neq j$, 
$V \otensor^{\ell} W = 0$ in this case.  
If $i = j$ then we have $V \otensor^{\ell} W 
\in H_i \lMod$ as required.

Finally, consider a finite-dimensional 
$V \in H_i \lMod$ and let $V^*$ be its left 
dual in $H \lMod$.  As one of the axioms for 
the dual, the composition
\[
V^* \overset{1_{V^*} \otimes \coev}{\longrightarrow} 
V^* \otensor^{\ell} V \otensor^{\ell} V^* 
\overset{\ev \otimes 1_{V^*}}{\longrightarrow} V^*
\]
is equal to the identity on $V^*$.  By the 
previous paragraph, $V^* \otensor^{\ell} V 
\otensor^{\ell} V^* \in H_i \lMod$. Since 
this module surjects onto $V^*$, we have 
$V^* \in H_i \lMod$ as well.
\end{proof}

We refer to the paper \cite{RWZ1} for the undefined terms in the next corollary.

\begin{corollary}
\label{yycor5.5}
Let $H$ be an weak Hopf algebra which is 
finite over its affine center. Then $H$ is a 
direct sum of finitely many AS Gorenstein 
weak Hopf algebras. Each direct summand is 
Auslander Gorenstein, Cohen--Macaulay, and 
homogeneous of finite Gelfand–-Kirillov 
dimension equal to its injective dimension.
\end{corollary}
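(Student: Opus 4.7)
The plan is to simply combine the three main results of Section~\ref{yysec5} with the previous work \cite[Theorem 0.3(1)]{RWZ1}. First I would invoke \cite[Theorem 0.3(1)]{RWZ1} to write $H \cong K_1 \oplus K_2 \oplus \cdots \oplus K_n$ as an algebra decomposition, where each $K_i$ is a noetherian indecomposable algebra which is AS Gorenstein, Auslander Gorenstein, Cohen--Macaulay, and homogeneous of finite Gelfand--Kirillov dimension equal to its injective dimension. Next, Proposition~\ref{yypro5.2} tells us that $H$ is residually finite as a left $H$-module, since $H$ is a finite module over its affine center. These two facts together verify the hypotheses of Theorem~\ref{yythm5.4}, which then yields a decomposition $H \cong H_1 \oplus H_2 \oplus \cdots \oplus H_m$ as weak Hopf algebras, where each $H_i$ is AS Gorenstein.

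The only thing left to verify is that each weak Hopf algebra summand $H_i$ retains the additional structural properties claimed in the corollary. Examining the proof of Theorem~\ref{yythm5.4}, the weak Hopf summand $H_i$ is obtained by grouping together those $K_j$ that share a common injective dimension $d_i$. Since each $K_j$ is Auslander Gorenstein, Cohen--Macaulay, and homogeneous of GK dimension equal to its injective dimension, and a finite direct sum of algebras sharing the same (finite) injective dimension and the same homogeneous GK dimension again enjoys all of these properties (each of them is preserved under finite direct sums within a fixed dimension), it follows that each $H_i$ inherits the Auslander Gorenstein, Cohen--Macaulay, and homogeneous GK-dimension equal to injective dimension properties from its constituents $K_j$.

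I do not anticipate any real obstacle here, as the whole point of Section~\ref{yysec5} is to assemble these pieces; the corollary is essentially a direct application of Proposition~\ref{yypro5.2} and Theorem~\ref{yythm5.4} on top of \cite[Theorem 0.3(1)]{RWZ1}. The only modestly subtle point is recognising that the regrouping of indecomposable summands by injective dimension (carried out in the proof of Theorem~\ref{yythm5.4}) preserves the homological and growth properties, which is straightforward from their definitions.
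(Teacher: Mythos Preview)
Your proposal is correct and follows essentially the same approach as the paper: invoke \cite[Theorem 0.3]{RWZ1}, then Proposition~\ref{yypro5.2}, then Theorem~\ref{yythm5.4}, and finally \cite[Theorem 0.3]{RWZ1} again for the extra homological properties. Your explanation of why the regrouping by injective dimension preserves the Auslander Gorenstein, Cohen--Macaulay, and GK-homogeneity properties is in fact more explicit than the paper's proof, which simply cites these results without further comment.
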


\begin{proof}
By \cite[Theorem 0.3]{RWZ1}, $H$ is a direct 
sum of finitely many AS Gorenstein algebras.  
The result now follows from Proposition 
\ref{yypro5.2}, Theorem \ref{yythm5.4} and 
\cite[Theorem 0.3]{RWZ1}.
\end{proof}

The corollary gives evidence that the following 
version of Brown--Goodearl question may have 
a positive answer.

\begin{question}\cite[Question 8.1]{RWZ1}
\label{yyque5.6}
Let $H$ be a noetherian weak Hopf algebra. 
Is $H$ isomorphic to a finite direct sum of 
AS Gorenstein weak Hopf algebras?
\end{question}

We close with the proof of the summary theorem 
from the introduction.

\begin{proof}[Proof of Theorem \ref{yythm0.6}]
Part (1) was already noted in Corollary~\ref{yycor5.5}.
Part (2) follows from part (1) and Theorem~\ref{yythm4.6}.  
Part (3) follows from Theorem \ref{yythm4.7}(2).
\end{proof}

\subsection*{Acknowledgments} 
R. Won was partially supported by an AMS--Simons 
Travel Grant and Simons Foundation grant \#961085. 
J.J. Zhang was partially supported by the US 
National Science Foundation (Nos. DMS-2001015 and 
DMS-2302087).  This material is based upon work 
supported by the National Science Foundation under 
Grant No.\ DMS-1928930 and by the Alfred P. Sloan 
Foundation under grant G-2021-16778, while the 
authors were in residence at the Simons Laufer 
Mathematical Sciences Institute (formerly MSRI) 
in Berkeley, California, during the Spring 2024 
semester.

\end{document}